\newtheorem{thm}{Theorem}[section]
\newtheorem{pro}[thm]{Proposition}
\newtheorem{lem}[thm]{Lemma}
\newtheorem{cor}[thm]{Corollary}
\newtheorem{res}[thm]{Result}
\theoremstyle{remark}
\newtheorem{dfn}[thm]{Definition}
\newenvironment{exa}[1]
  {\examp}
  {\endexamp}
\renewcommand{\geq}{\geqslant}
\renewcommand{\leq}{\leqslant}
\newcommand{\floor}[1]{\left\lfloor #1 \right\rfloor}
\def\bS{\mathbb{S}}
\def\sgp{semigroup}
\DeclareMathOperator\PP{PPart}  
\DeclareRobustCommand{\stirling}{\genfrac\{\}{0pt}{}}
\DeclareRobustCommand{\comb}{\genfrac(){0pt}{}}
\begin{document}


\title[Nilpotent semigroups of index three]{Semirigidity and the enumeration of \\ nilpotent semigroups of index three} 


\author{IGOR DOLINKA}

\address{Department of Mathematics and Informatics, University of Novi Sad, Trg Dositeja Obra\-do\-vi\-\'ca 4,
21101 Novi Sad, Serbia}

\email{dockie@dmi.uns.ac.rs}

\author{D. G. FITZGERALD}

\address{School of Natural Sciences, University of Tasmania, Private Bag 37, nipaluna/Hobart 7001, Australia}

\email{d.fitzgerald.utas@icloud.com}

\author{JAMES D. MITCHELL}

\address{School of Mathematics and Statistics, University of St Andrews, St Andrews KY16 9SS, Scotland, United Kingdom}

\email{jdm3@st-and.ac.uk}

\thanks{This research was initiated during the second named author's visit to the University of Novi Sad in May 2018. The research of the first named author 
is supported by the Personal Grant F-121 ``Problems of combinatorial semigroup and group theory'' of the Serbian Academy of Sciences and Arts. 
The second named author thanks the University of Tasmania for assistance with travel and UniSuper Ltd for an indexed pension.}


\subjclass[2010]{Primary 20M10; Secondary 05A16, 05E18}


\keywords{3-nilpotent semigroups; rigid; semirigid}


\begin{abstract}
There is strong evidence for the belief that `almost all' finite semigroups, whether we consider multiplication operations on a fixed set or their isomorphism classes, 
are nilpotent of index $3$ ($3$-nilpotent for short). The only known method for counting all semigroups of given order is exhaustive testing, but formul\ae~ exist 
for the numbers of $3$-nilpotent ones, and it is also known that `almost all' of these are rigid (have only trivial automorphism).  
 
Here we express the number of distinct 3-nilpotent semigroup operations on a fixed set of cardinality $n$ as a sum of Stirling numbers, and provide a new expression 
for the number of isomorphism classes of 3-nilpotent semigroups of cardinality $n$. We introduce a notion of semirigidity for semigroups (as a generalization of rigidity) 
and find computationally tractable formul\ae~ giving an upper bound for the number of pairwise non-isomorphic semirigid 3-nilpotent semigroups, and thus an improved 
lower bound for the number of all 3-nilpotent semigroups up to isomorphism. Analogous formul\ae~ are also developed for isomorphism classes such as commutative and 
self-dual semigroups, and for equivalence classes (isomorphic or anti-isomorphic). The method relies on an application of the theory of orbit counting in permutation group actions.
Our main results are accompanied by tables containing values of these numbers and bounds up to $n=10$ with computations carried out in GAP (but perfectly feasible well beyond this value of $n$).
\end{abstract}


\maketitle

\vspace{-3mm}


\section{Introduction and Preliminaries}

Let $\mathscr{V}_{k}$ denote the class of semigroups $S$ satisfying $S^{k}=0$; it is a variety. Let $\mathscr{N}_{k}=\mathscr{V}_{k} \setminus \mathscr{V}_{k-1}$; $\mathscr{N}_{3}$ is the class of \emph{3-nilpotent} (or class 2) semigroups (in the strict sense); it is not a variety. Note that $\mathscr{V}_{3}$ has the law $xyz=0$, and $S \in \mathscr{N}_{3}$ if and only if $S \in \mathscr{V}_{3}$ and there exist $a,b \in S$ such that $ab \neq 0$.   
The most-studied properties of semigroups---distinctive features of ideals, location of idempotents, etc.---usually rule out 3-nilpotency.  Because  3-nilpotent semigroups are so prevalent, yet appear so featureless, they have been regarded as `junk' semigroups; and the main topic of study becomes that of how many there are.  

In 1976 Kleitman, Rothschild and Spencer \cite{KRS} gave an argument asserting that the proportion of 3-nilpotent semigroups, amongst all semigroups of order $n$, is asymptotically $1$ (although the agreement is poor below about $n=8$).  
Later opinion (e.g. \cite{JMS}) regards their argument as incomplete, and no satisfactory proof has been found.  Yet as a conjecture, the statement has considerable support.  What is certain is that the number of 3-nilpotent semigroups provides a lower bound for the number of  all \sgp s of given order, and so the construction of good fast methods for counting the 3-nilpotents remains relevant.     

The focus of \cite{KRS} is on semigroup operations on a fixed set of given size.  Different such semigroups may yet be isomorphic, or anti-isomorphic, and so count as the same in essence.  Thus it is of interest to ask analogous questions for isomorphism classes of semigroups, and for classes including isomorphic and anti-isomorphic copies too, which we call \emph{equivalence} classes.  We say of these enumerations that they are `up to identity',  `up to isomorphism', or `up to equivalence' respectively.   
 A  comprehensive  treatment of the matter by Distler and Mitchell \cite{DM} provides explicit (though complex) formul\ae~ for the numbers of 3-nilpotent semigroups up to identity, isomorphism, and equivalence, and for the corresponding numbers of commutative ones too. Their method used formul\ae~ from Chapter 10 of \cite{JMS}, with minor corrections.  

A partial exception to the lack of study of structural properties involves rigidity.  Recall that $S$ is said to be \emph{rigid} if its only automorphism is the identity map; otherwise it is \emph{flexible}.  The rigid semigroups of $\mathscr{N}_{3}$ form a significant subclass, studied in particular by P.\ A.\ Grillet in \cite{PAG2}, where he proved that `almost all' 3-nilpotent semigroups, up to identity and up to equivalence, are rigid. (His companion paper \cite{PAG1} is also noteworthy.) 
   
Let us write $t_{n}$ for the number of $\mathscr{N}_{3}$ semigroups of order $n$ (up to identity), and $r_{n}$ and $f_{n}$ for the numbers of rigid and flexible ones.  Let $\bar{t}_{n}$ and $\bar{\bar{t}}_{n}$ respectively be the numbers of isomorphism classes and equivalence classes of $\mathscr{N}_{3}$ semigroups of order $n$.  Further let $\bar{r}_{n}$ and $\bar{\bar{r}}_{n}$ stand for the numbers of rigid members of the respective classes, and in like manner, $\bar{f}_{n}$ and $\bar{\bar{f}}_{n}$ for the flexible ones.  Thus $t_{n} = r_{n} + f_{n}$, etc. 

\begin{res}[Grillet \cite{PAG2}, Theorem 4.5]  
$\lim_{n\rightarrow \infty}f_{n}/t_{n} = 0$, whence $r_{n}$  is an asymptotically good lower bound for $t_{n}$, in the sense that $\lim_{n\rightarrow \infty}r_{n}/t_{n} = 1$; moreover $\lim_{n\rightarrow \infty}\bar{\bar{f}}_{n}/\bar{\bar{t}}_{n} = 0$, though with a slower rate of convergence.
\end{res}

As a corollary, the analogous statement for numbers up to isomorphism is also true. 

\begin{cor}\label{corPAG} 
The quantity $\bar{r}_{n}$  is an asymptotically good lower bound for $\bar{t}_{n}$.    
\end{cor}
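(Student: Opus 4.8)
The plan is to derive the isomorphism-class statement $\bar{r}_{n}/\bar{t}_{n}\to 1$ from the \emph{equivalence}-class part of the Result, namely $\bar{\bar{f}}_{n}/\bar{\bar{t}}_{n}\to 0$, rather than from the identity-level statement $f_{n}/t_{n}\to 0$. The reason for this choice is instructive. Passing from identity counts to isomorphism counts amounts to dividing each $S_{n}$-relabelling orbit by its size $n!/|\mathrm{Aut}(S)|$, and these sizes vary enormously. Since every automorphism of a nilpotent semigroup fixes the zero, $\mathrm{Aut}(S)\leq S_{n-1}$, so a rigid semigroup has an orbit of full size $n!$ while a flexible one may have an orbit as small as $n$. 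Consequently the best general inequality one can extract is of the shape $\bar{f}_{n}/\bar{t}_{n}\lesssim (n-1)!\,f_{n}/r_{n}$, and a purely qualitative hypothesis $f_{n}/t_{n}\to 0$ cannot be pushed through the resulting factorial factor. Thus the identity-level route is a dead end absent quantitative rate information.

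By contrast, the passage between equivalence classes and isomorphism classes costs only a bounded factor, because it is governed by duality rather than by the full relabelling group. First I would record that the automorphism group is a dual invariant: if $\varphi$ is an automorphism of $S$ then, writing $*$ for the operation of $S^{\mathrm{op}}$, we have $\varphi(x*y)=\varphi(yx)=\varphi(y)\varphi(x)=\varphi(x)*\varphi(y)$, so $\mathrm{Aut}(S)=\mathrm{Aut}(S^{\mathrm{op}})$; in particular $S$ is flexible if and only if $S^{\mathrm{op}}$ is. Next I would unpack the definitions: an equivalence class is a pair $\{[S],[S^{\mathrm{op}}]\}$ of isomorphism classes, which is a single isomorphism class when $S$ is self-dual and two otherwise. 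Hence each flexible equivalence class accounts for at most two flexible isomorphism classes, giving $\bar{f}_{n}\leq 2\bar{\bar{f}}_{n}$, while each equivalence class contains at least one isomorphism class, giving $\bar{t}_{n}\geq\bar{\bar{t}}_{n}$.

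Combining these two inequalities yields
\[
\frac{\bar{f}_{n}}{\bar{t}_{n}}\;\leq\;\frac{2\bar{\bar{f}}_{n}}{\bar{\bar{t}}_{n}},
\]
and the right-hand side tends to $0$ by the Result. Therefore $\bar{f}_{n}/\bar{t}_{n}\to 0$, equivalently $\bar{r}_{n}/\bar{t}_{n}=1-\bar{f}_{n}/\bar{t}_{n}\to 1$, which is exactly the assertion that $\bar{r}_{n}$ is an asymptotically good lower bound for $\bar{t}_{n}$. The only genuinely necessary verifications are the dual-invariance of $\mathrm{Aut}(S)$ and the elementary bookkeeping that each equivalence class splits into at most two isomorphism classes. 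The main conceptual point---and the step easiest to get wrong---is recognising that one must invoke the equivalence-level limit: its slower convergence, noted in the Result, is still amply sufficient here, whereas the identity-level limit is not.
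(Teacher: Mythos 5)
Your proof is correct and follows essentially the same route as the paper: both derive $\bar{f}_{n}/\bar{t}_{n}\to 0$ from the equivalence-class limit $\bar{\bar{f}}_{n}/\bar{\bar{t}}_{n}\to 0$ via the two inequalities $\bar{f}_{n}\leq 2\bar{\bar{f}}_{n}$ and $\bar{\bar{t}}_{n}\leq\bar{t}_{n}$, using the fact that each equivalence class comprises one or two isomorphism classes. Your explicit check that $\mathrm{Aut}(S)=\mathrm{Aut}(S^{\mathrm{op}})$ (so flexibility is a dual invariant) and your remark on why the identity-level limit cannot be used are sensible elaborations of steps the paper leaves implicit, but they do not change the argument.
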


\begin{proof}
Since each equivalence class is made up of either one or two isomorphism classes, $\bar{f}_{n} < 2\bar{\bar{f}}_{n}$
and $\bar{\bar{t}}_{n} < \bar{t}_{n}$.  It follows that $\bar{f}_{n} / \bar{t}_{n} < 2\bar{\bar{f}}_{n}/\bar{\bar{t}}_{n}$ and hence that $\lim_{n\rightarrow \infty}\bar{f}_{n} / \bar{t}_{n} = 0$ and  $\lim_{n\rightarrow \infty}\bar{r}_{n} / \bar{t}_{n} = 1$.  
\end{proof} 

We continue with some general and structural remarks.  All semigroups in this paper will be finite and
have a zero element $0$ without further mention (so they form a variety of type $(2,0)$). 
Any semigroup has a chain of ideals $S \supseteq S^{2} \supseteq S^{3} \dots$; in a 3-nilpotent semigroup we have $S \supset S^{2} \neq 0$.  It is immediate that $S\setminus S^{2}$ ($= X$, say) is the unique minimum generating set for $S$ and that $S^{2}$ is the intersection of all the maximal proper ideals of $S$.   (Maximal ideals of $S$ have the form $S\setminus \{x\}$ for any $x \in S\setminus S^{2}$.) 

The following well-known construction is crucial.  
Given non-empty disjoint sets $X$ and $K$, an element $0 \notin X\cup K$, and a map $m:X \times X\rightarrow K\cup \{0\}$ such that $K\subseteq \text{im}(m)$, let $N(X,K,m)$ denote the magma with carrier set $X\cup K\cup \{0\}$ and multiplication $xy=m(x,y)$ if $x,y\in X$ and $xy=0$ otherwise.  Then $N(X,K,m)$ is in $\mathscr{N}_{3}$, as $x(yz)=0=(xy)z$.  Note that $m$ is just a minimal description of the Cayley table, and the condition $K\subseteq \text{im}(m)$ is the same as saying that the pointed map $m^{0}:(X \times X)\cup \{0\} \rightarrow K\cup \{0\}$ is surjective, or that $\text{im}(m)$ is either $K$ or $K\cup \{0\}$. 

Conversely, every 3-nilpotent semigroup $S$ can be written, up to isomorphism, in this form $S=N(X,K,m)$. Here $X$ is the
unique minimal generating set and $m$ is a function $X\times X\to K\cup\{0\}$ whose image is either $K$ or $K\cup\{0\}$; 
the carrier set of this semigroup is $X\cup K\cup\{0\}$. 

 Henceforth let $|S|=n, |X|=r$ and $|K|=k$, so $n-1=r+k$.  Upon fixing $X=\{1,\dots, r\}$ and   $K=\{r+1,\dots,r+k\}$, our first goal amounts to calculating the numbers of functions $m$ giving rise to semigroups $N(X,K,m)$, where $r+k+1=n$, and the numbers of their isomorphism classes.  Here $3$-nilpotent semigroups are represented as entirely combinatorial objects, as is implicit in \cite{KRS} and later literature; yet our approach will also involve understanding how properties of the functions $m$ are reflected in semigroup properties.


\section{Partial partitions and presentations}

 Let the partition of $X\times X$ induced by $m$ be denoted $\mathbf{P}_{m}=\{P_{0}, P_{1}, \dots, P_{k}\}$ where $P_{0}$ may be empty, but other $P_{j}$ are non-empty. Here $m(x,y) = r+j$ for $1\leq j\leq k$ when $(x,y)\in P_{j}$ and $m(x,y) = 0$ when $(x,y)\in P_{0}$.  It is useful to note that such partitions of a set $Y$ are equivalent to \emph{pointed partitions}  of the pointed set $Y\cup\{0\}$ (with $0\notin Y$). These consist of $k+1$ classes, in which $P_{0}$ is distinguished by containing the `point' $0$.   

Equally clearly, a function $m$ with the properties as described is also equivalent to a \emph{partial partition} of the set $X\times X$ 
into $k$ classes, that is a family of $k$ disjoint non-empty subsets of $X\times X$: for $1\leq j\leq k$ we set $(x,x')\in P_j$ if 
and only if $m(x,x')=r+j$.  Note that each such $P_j$ is non-empty, while for pairs $(x,x')$ not belonging to any of these 
sets we must have $m(x,x')=0$.  
We shall refer to these objects as either partial or pointed partitions, according to convenience.
 
Usually we shall write $m(x,y)$ as simply $xy$.  Then $\mathbf{P}$ specifies a presentation for $S$ within $\mathscr{V}_{3}$ in which the relations are 
 \begin{align*}
 &x_{1}y_{1}  =  x_{2}y_{2}, &&\text{   when   }(x_{1},y_{1}), (x_{2},y_{2}) \text{   are in the same block of   } \mathbf{P},  \\
 &xy    =  0,  &&\text{   when   }(x, y) \text{   is in no block of   } \mathbf{P}. 
\end{align*}    
So we say that $S$ is determined {\it up to presentation} by $\bf{P}$, and write $S = N(\mathbf{P})$, with the understanding that in $N(\mathbf{P})$ we define $xy$ to be $j$ when $(x,y)\in P_{j}$.  For a singleton block $\{(x,y)\}$ we must write $xy = xy$ in this convention, since unlisted pairs are assigned to $P_{0}$;  an alternative convention (used below) understands that unlisted pairs are singleton blocks, and the $P_{0}$ block is explicitly given.  
The number of 3-nilpotent semigroups of order $n$, up to presentation, is thus the number of such partitions $\mathbf{P}$ as $k$ runs from $1$ to its maximum.  

\begin{exa}{0} \label{ex0}
The free $3$-nilpotent semigroup $F_{r}$ of rank\footnote{We use rank in the usual two different senses, in the expectation that the context will make the intent clear: one for the cardinality of a generating set of a semigroup, the other for the cardinality of a partition (including that of the kernel of a transformation). } $r$ has $\mathbf{P}$ equal to the identity partition on $X\times X$; hence it has order $n=1 + r + r^{2}$. 
It has the empty presentation according to the second convention mentioned just above.
\end{exa} 	

In what follows, the Stirling number of second kind, the number of partitions of an $n$-set into $k$ parts, is denoted $\stirling{n}{k}$.  The corresponding number for partial partitions is given next.

\begin{lem}\label{nppart}
The number of partial partitions of an $n$-set into $k$ parts is $\stirling{n+1}{k+1}$. 
\end{lem}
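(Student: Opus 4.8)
The plan is to set up a bijection between partial partitions of an $n$-set $Y$ into $k$ parts and ordinary partitions of the $(n+1)$-set $Y\cup\{0\}$ into $k+1$ parts, where $0\notin Y$ is an adjoined point. Since the latter are counted by $\stirling{n+1}{k+1}$ directly from the definition of the Stirling number, the lemma follows at once. This simply makes precise the equivalence between partial partitions and pointed partitions already observed in the text.

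First I would define the forward map. Given a partial partition $\{P_{1},\dots,P_{k}\}$ of $Y$ --- that is, $k$ pairwise disjoint non-empty subsets of $Y$ which need not exhaust $Y$ --- let $U = Y\setminus\bigcup_{j=1}^{k}P_{j}$ be the set of elements lying in no block, and put $P_{0}=U\cup\{0\}$. Then $\{P_{0},P_{1},\dots,P_{k}\}$ is a genuine partition of $Y\cup\{0\}$ into exactly $k+1$ non-empty blocks: each $P_{j}$ with $j\geq 1$ is non-empty by hypothesis, and $P_{0}$ is non-empty because it contains the point $0$. This is exactly the associated pointed partition.

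Next I would supply the inverse. Given any partition of $Y\cup\{0\}$ into $k+1$ parts, precisely one block contains $0$; call it $P_{0}$. Discarding $P_{0}$ altogether leaves the remaining $k$ blocks, which are non-empty subsets of $Y$ and hence constitute a partial partition of $Y$ into $k$ parts. It is routine to verify that these two assignments are mutually inverse, whence the two families are in bijection.

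The only point requiring care --- and it is a mild bookkeeping matter rather than a real obstacle --- is the handling of the emptiness conventions. The blocks $P_{1},\dots,P_{k}$ of a partial partition must be non-empty, whereas the leftover set $U$ is allowed to be empty (this is precisely the case in which the partial partition is in fact an honest partition of $Y$); correspondingly $P_{0}$ may reduce to $\{0\}$. Because $P_{0}$ always carries the point $0$, it is a legitimate non-empty block on the partition side in every case, so the count $\stirling{n+1}{k+1}$ is matched exactly, with no off-by-one correction needed.
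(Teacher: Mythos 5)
Your proof is correct, but it takes a different route from the paper's own. You construct a single explicit bijection: adjoin the point $0$ to the leftover set $U$ to get the block $P_{0}=U\cup\{0\}$, thereby matching partial partitions of the $n$-set $Y$ into $k$ parts with honest partitions of the $(n+1)$-set $Y\cup\{0\}$ into $k+1$ parts, whose count is $\stirling{n+1}{k+1}$ by definition. The paper instead splits into cases: the partial partitions that are actually total are counted by $\stirling{n}{k}$, while the properly partial ones are counted by choosing a total partition of $Y$ itself into $k+1$ classes and pointing one class as the leftover, giving $(k+1)\stirling{n}{k+1}$; the lemma then follows from the Stirling recurrence $\stirling{n}{k}+(k+1)\stirling{n}{k+1}=\stirling{n+1}{k+1}$. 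The trade-off is that the paper's argument is shorter if one takes that identity as known, whereas yours is self-contained and purely bijective --- it needs no identity at all, and it handles the empty-leftover case uniformly (your $P_{0}$ is non-empty because it always carries $0$), exactly the bookkeeping point you flagged. Indeed, your bijection refines the paper's proof: splitting it according to whether $P_{0}=\{0\}$ or $P_{0}\supsetneq\{0\}$ recovers the paper's two cases and yields a bijective proof of the recurrence as a byproduct. Your argument is also the one most directly suggested by the paper's earlier remark identifying partial partitions of $Y$ with pointed partitions of $Y\cup\{0\}$.
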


\begin{proof}
 We have 
$\stirling{n}{k}$
such partitions that are actually total. On the other hand, a properly partial partition into $k$ classes is 
equivalent to a pointed partition into $k+1$ classes: we have 
$\stirling{n}{k+1}$
total partitions of the considered set into $k+1$ classes, which should be multiplied by $k+1$ choices for the 
class that will be pointed.  The required number is as claimed
$\stirling{n}{k} + (k+1)\stirling{n}{k+1} =  \stirling{n+1}{k+1}.$
\end{proof}

A presentation, together with the assignment of a zero and labels from $K$ to its classes, defines a $3$-nilpotent semigroup on the set $X\cup K\cup \{0\}$ of cardinality $r+k+1 = n$ uniquely, or as we say, \emph{up to identity}.  Thus we have:

\begin{lem}\label{1}
The number of $3$-nilpotent semigroups of order $n$, up to presentation, is $$\sum_{r=1}^{n-2}  \stirling{r^{2} + 1}{n-r},$$
and up to identity,  $$\sum_{r=1}^{n-2}\stirling{r^{2} + 1}{n-r} \frac{n!}{r!} = t_{n}.$$  
\end{lem}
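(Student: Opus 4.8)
The plan is to treat the two enumerations separately, in both cases via the correspondence between $3$-nilpotent semigroups and partial partitions from Section~2 together with Lemma~\ref{nppart}. Fix the size $r$ of the generating set, so $|X|=r$ and $|K|=k=n-1-r$. Membership in $\mathscr{N}_{3}$ forces $S^{2}\neq 0$, that is $K\neq\varnothing$, hence $k\geq 1$; combined with $r\geq 1$ this restricts $r$ to the range $1\leq r\leq n-2$. By the discussion in Section~2, a semigroup $N(\mathbf{P})$ taken up to presentation with $|X|=r$ is precisely a partial partition $\mathbf{P}$ of $X\times X$ into $k$ classes. Since $|X\times X|=r^{2}$, Lemma~\ref{nppart} counts these as $\stirling{r^{2}+1}{k+1}=\stirling{r^{2}+1}{n-r}$, and summing over the admissible $r$ gives the first formula.

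For the count up to identity I would enumerate multiplication tables directly on a fixed carrier set of size $n$. The structural remarks in Section~1 show that the zero, the minimal generating set $X=S\setminus S^{2}$, and the set $K=S^{2}\setminus\{0\}$ are all determined intrinsically by $S$, so a table is recovered without ambiguity from the data consisting of the zero, the set $X$, and the map $m$. I would therefore build each table by choosing the zero ($n$ ways), choosing the $r$-element subset $X$ among the remaining $n-1$ points ($\binom{n-1}{r}$ ways, leaving $K$ determined), and then specifying $m\colon X\times X\to K\cup\{0\}$ with $K\subseteq\operatorname{im}(m)$. This last datum factors as a partial partition of $X\times X$ into $k$ classes, counted as above by $\stirling{r^{2}+1}{k+1}$, followed by a bijective labelling of those classes by the $k$ elements of $K$, which contributes $k!$.

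Assembling the per-$r$ total as $n\binom{n-1}{r}k!\stirling{r^{2}+1}{n-r}$ and simplifying the prefactor using $k=n-1-r$, one has $n\binom{n-1}{r}k!=n!/r!$, so summation over $r$ yields the claimed expression for $t_{n}$. I expect the only delicate point to be the bookkeeping separating ``up to presentation'' from ``up to identity'': one must verify that $X$ should be chosen as an \emph{unordered} subset, rather than an ordered tuple, since it is exactly this choice that matches the partition count $\stirling{r^{2}+1}{k+1}$ (equivalently, the number of partial partitions of $X\times X$ depends only on $|X|=r$, not on which $r$-subset $X$ is). Granting that the intrinsic determination of the zero and of $X=S\setminus S^{2}$ makes each labelled semigroup arise exactly once, the remaining arithmetic is routine.
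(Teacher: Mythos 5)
Your proposal is correct and follows essentially the same route as the paper: both arguments decompose a labelled $3$-nilpotent semigroup into a choice of generating set, a partial partition of $X\times X$ counted by Lemma~\ref{nppart}, and a labelling of the partition classes, with the intrinsic determination of $0$, $X=S\setminus S^{2}$ and $K=S^{2}\setminus\{0\}$ guaranteeing each table is counted once. The only difference is bookkeeping: you choose the zero first and label the $k$ nonzero classes, giving $n\binom{n-1}{r}k!$, while the paper chooses $X$ from all $n$ points and labels all $k+1$ classes (zero included), giving $\binom{n}{r}(n-r)!$ --- both equal $n!/r!$.
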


\begin{proof}
 To construct a $3$-nilpotent semigroup with $r$ generators on a given $n$-set in a unique and exhaustive fashion, we first choose a set of generators ($\comb{n}{r}$ ways), then choose a presentation using them ($\stirling{r^{2} + 1}{n-r}$ ways)  and then apply $k+1 = n-r$ labels (including a zero) to the parts of the corresponding partition (the remaining elements of the semigroup)---this in $(k+1)! = (n-r)!$ ways. 
\end{proof}

\begin{table}[htb]
\begin{center}
{\footnotesize 
\begin{tabular}{l|r|r}
  $n$ & \# up to identity                  & \# up to presentation           \\ \hline
  3   & 6                                  & 1                               \\
  4   & 180                                & 15                              \\
  5   & 11~720                             & 536                             \\
  6   & 3~089~250                          & 74~875                          \\
  7   & 5~944~080~072                      & 55~046~362                      \\
  8   & 147~348~275~209~800                & 493~024~606~840                 \\
  9   & 38~430~603~831~264~883~632         & 75~797~430~892~164~879          \\
  10  & 90~116~197~775~746~464~859~791~750 & 120~455~109~059~841~172~414~778
 \end{tabular}
}
\end{center}
\vspace{2mm}
\caption{The number of 3-nilpotent semigroups up to identity, and up to presentation (Lemma \ref{1}).} 
\end{table}

In similar fashion we see that there are, up to presentation, 
$$B_{r^{2}+1}-1 = \sum_{k=1}^{r^{2}}  \stirling{r^{2} + 1}{k+1}$$  
$3$-nilpotent semigroups of rank $r$, where $B_{m}$ is the Bell number of index $m$.  Since there are infinitely many finite semigroups of rank $r$, the density of finite 3-nilpotent semigroups when stratified by rank is at the opposite extreme to that when stratified by order.  For a comment about the stratification by representation degree, see \cite{EENM}.

In Lemma \ref{1} the range of summation may be reduced to $r_{0}\leq r\leq n-2$, where $r_{0}$ is the least $r$ satisfying $r^{2}\geq k = n-r-1$.  
Then the expression for $t_{n}$ may be rewritten (using the inclusion-exclusion formula for a Stirling number) as 
\begin{equation}\label{eq:tn}
t_{n} = \sum_{r=r_{0}}^{n-2}\frac{n!}{r!}\frac{1}{(n-r)!}\sum_{j=0}^{n-r}(-1)^{n-r-j}\binom{n-r}{j}j^{r^{2}+1}.
\end{equation}
This is identical to the expression for $t_{n}$ in the Introduction to \cite{PAG2} and in Theorem 2.1 of \cite{DM}, as we now demonstrate.  The version from \cite{DM}, when cast in our present notation, with $k$ in place of $m-1$, is  
$$\sum_{k=1}^{b(n)}\binom{n}{k+1}(k+1)\sum_{i=0}^{k}(-1)^{i}\binom{k}{i}(k+1-i)^{r^{2}},$$
where $b(n) = n-1-r_{0}$.  
Now if we write $j=k+1-i, i=k+1-j$, and $n-r = k+1$, this becomes 
{\small $$
 \sum_{r=r_{0}}^{n-2}\binom{n}{n-r}\sum_{j=1}^{n-r}\frac{n-r}{j}(-1)^{n-r-j}\binom{n-r-1}{j-1}j^{r^{2}+1} 
 = \sum_{r=r_{0}}^{n-2}\binom{n}{n-r}\sum_{j=1}^{n-r}(-1)^{n-r-j}\binom{n-r}{j}j^{r^{2}+1}.
$$}
Taking into account the zero term for $j=0$, this indeed is (\ref{eq:tn}) above.  
The formula of Lemma \ref{1} provided a useful check on the implementation of the inclusion-exclusion formula used in \cite{DM}.  


\section{Commutativity and self-duality}   

A  3-nilpotent semigroup $S = N(X,K,m)$ is commutative when
$m(x,y) = m(y,x)$ (which we may describe as $m$ being symmetrical). 
 Define a subset $A$ of $X\times X$  to be 
\emph{symmetrical} if $(x,y)\in A$ implies $(y,x)\in A$, and \emph{skew} otherwise.  Say also that a (partial) partition is symmetrical if each of its blocks is symmetrical (the vacant or pointed block is also symmetrical in that case).  Then $S$ is commutative if and only if $m$ is symmetrical, if and only if its kernel $\mathbf{P}$ is symmetrical.  

To enumerate commutative $3$-nilpotent semigroups in the next Lemma, we replace $r^{2}$ ordered pairs of generators  by $\comb{r+1}{2}$ unordered pairs of generators. Other details follow the same reasoning as in 
Lemma \ref{1}. 
\begin{lem}\label{2}
The number of commutative $3$-nilpotent semigroups of order $n$, up to presentation, is $$\sum_{r=1}^{n-2}  \stirling{\frac{1}{2}r(r+1) + 1}{n-r},$$ and up to identity, $$\sum_{r=1}^{n-2}\stirling{\frac{1}{2}r(r+1) + 1}{n-r} \frac{n!}{r!}.$$
\end{lem}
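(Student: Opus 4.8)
The plan is to run the argument of Lemma~\ref{1} essentially verbatim, replacing the set $X\times X$ of ordered pairs of generators by the set of \emph{unordered} pairs, and then to quote Lemma~\ref{nppart} in exactly the same way. The whole content of the commutative case is therefore the identification of the right combinatorial object to which Lemma~\ref{nppart} should be applied.

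First I would make the reduction precise. By the discussion opening Section~3, $S=N(X,K,m)$ is commutative exactly when its partition $\mathbf{P}$ of $X\times X$ is symmetrical, that is, when each block is invariant under the swap involution $\sigma\colon(x,y)\mapsto(y,x)$. A $\sigma$-invariant subset of $X\times X$ is precisely a union of $\sigma$-orbits, so a symmetrical (partial) partition of $X\times X$ into $k$ blocks is the same datum as a (partial) partition of the orbit set $(X\times X)/\sigma$ into $k$ blocks; the vacant block $P_{0}$ on one side corresponds to the vacant block on the other, being the complement of a $\sigma$-invariant set and hence itself $\sigma$-invariant. This gives a bijection between the two families of partitions.

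Next I would count the orbits. The involution $\sigma$ fixes exactly the $r$ diagonal pairs $(x,x)$ and partitions the remaining $r^{2}-r$ off-diagonal pairs into $\binom{r}{2}$ two-element orbits $\{(x,y),(y,x)\}$, so $\bigl|(X\times X)/\sigma\bigr| = r+\binom{r}{2} = \tfrac12 r(r+1)$. This is the number of unordered pairs of generators (with repetition allowed), matching the hint preceding the Lemma.

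With these two facts the count follows as before. A commutative 3-nilpotent semigroup of rank $r$ and order $n$ corresponds, up to presentation, to a partial partition of the $\tfrac12 r(r+1)$-element orbit set into $k=n-r-1$ classes, of which by Lemma~\ref{nppart} there are $\stirling{\frac12 r(r+1)+1}{n-r}$; summing over $1\leq r\leq n-2$ yields the first formula, with the Stirling number vanishing automatically whenever $k$ exceeds the available $\tfrac12 r(r+1)$ classes. For the count up to identity I would multiply, exactly as in Lemma~\ref{1}, by the $\binom{n}{r}$ choices of generating set and the $(n-r)!$ ways of labelling the $k+1$ non-generator elements (one of which becomes the zero), giving the factor $\binom{n}{r}(n-r)!=n!/r!$. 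I do not expect a serious obstacle here: the only step requiring genuine attention, rather than a transcription of Lemma~\ref{1}, is checking that the orbit bijection of the first step is exact, since everything downstream is formally identical to the non-commutative case.
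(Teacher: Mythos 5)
Your proposal is correct and follows essentially the same route as the paper: the paper likewise reduces commutativity to symmetry of the partial partition, identifies symmetric partial partitions of $X\times X$ with partial partitions of the set of diagonal elements together with unordered pairs (your orbit set $(X\times X)/\sigma$, of size $r+\comb{r}{2}=\tfrac12 r(r+1)$), applies Lemma~\ref{nppart}, and transfers the $n!/r!$ factor from Lemma~\ref{1}. Indeed your write-up is slightly more careful than the paper's, which states the key count with minor typographical slips ($\stirling{\frac12 r(r+1)}{k+1}$ rather than $\stirling{\frac12 r(r+1)+1}{k+1}$).
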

\begin{proof}
A partial partition consisting of symmetric blocks is equivalent to a partial partition of a set consisting of the diagonal elements $(x,x)$ together with the unordered pairs $\{x,y\}$ with $x\neq y$.  (Substitute the unordered pair $\{x,y\}$ by the pairs $(x,y)$ and $(y,x)$.)  This set has $r + \comb{r}{2} = \frac{1}{2}r(r+1)$ elements, so the number of partial partitions of $X\times X$ of rank $k$ is $\stirling{\frac{1}{2}r(r+1)}{k+1}  = \stirling{\frac{1}{2}r(r+1)}{n-2}$.  Summation over $r$ gives the result as claimed.
\end{proof}

\begin{table}[hb]
\begin{center}
{\footnotesize
  \begin{tabular}{l|r|r}
  $n$ & commutative \# up to identity & commutative \# up to presentation \\ \hline
  3   & 6                             & 1                                 \\
  4   & 84                            & 7                                 \\
  5   & 1~620                         & 69                                \\
  6   & 67~170                        & 1~325                             \\
  7   & 7~655~424                     & 61~618                            \\
  8   & 2~762~847~752                 & 9~384~727                         \\
  9   & 3~177~531~099~864             & 5~668~560~557                     \\
  10  & 11~942~816~968~513~350        & 12~235~722~262~623
 \end{tabular}
 }
\end{center}
\vspace{2mm}
\caption{The number of \emph{commutative} 3-nilpotent semigroups up to identity and up to presentation (Lemma \ref{2}).}
\end{table}

\begin{exa}{1}\label{ex1} 
There is (up to presentation and indeed up to isomorphism) just one $3$-nilpotent semigroup of order $3$ and it is $F_{1}$ (free on one generator).  For order $4$ the only term in the sum in Lemma \ref{1} has $r = 2$, so there are $\stirling{5}{2}( = 15)$  $3$-nilpotent semigroups of order $4$ up to presentation.  They are listed below, according to the relations on generators $x, y$; note that to write e.g. $xy = yx = y^{2} = 0$ is meant to imply $x^{2} \neq 0$, etc.
{\small
\begin{multicols}{3}
\begin{enumerate} [(i)]
 \item $xy = yx = y^{2} = 0$;  
 \item $x^{2} = xy = yx =0$;
 \item $x^{2} = yx = y^{2} = 0$;
 \item $x^{2} = xy = y^{2} = 0$;
  
 \item $x^{2} = y^{2}, xy = yx = 0$;
 
 \item $x^{2} = xy, yx = y^{2} = 0$;
 \item $ x^{2} = xy = 0, yx = y^{2}$;
 \item $x^{2} = yx, xy = y^{2} = 0$;
 \item $x^{2} = yx = 0, xy = y^{2}$;
 
 \item $x^{2} = y^{2} = 0, xy = yx$;
 
 \item $xy = yx = y^{2},  x^{2} = 0$;
 \item $x^{2} = xy = yx,  y^{2} = 0$;
 \item $x^{2} = yx = y^{2}, xy = 0$;
 \item $x^{2} = xy = y^{2}, yx = 0$;

\item $x^{2} = xy = yx = y^{2}$ \\  (i.e., all $\neq 0$).
      
\end{enumerate}
\end{multicols}%
}
Twelve of these are isomorphic in pairs, determined by exchanging $x$ and $y$: (i) with (ii), (iii) with (iv), (vi) with (vii), etc.  The other three---(v), (x), (xv)---are fixed by the exchange; so there are nine isomorphism classes.  Five of these are commutative: (i)+(ii), (v), (x), (xi)+(xii), (xv).  Presentations (vi) and (viii) (or (vii) and (ix), etc.) provide examples of mutually dual non-isomorphic pairs, and this collapses their isomorphism classes into a single equivalence class ((vi) to (ix))---the only such collapse,  the other seven isomorphism classes being self-dual (i.e. they contain with each member its dual).  Thus there are eight equivalence classes. 

The sums in Lemma \ref{1} 
are respectively $\stirling{5}{2} = 15$ and 
$\stirling{2}{3} \frac{4!}{1!}  + \stirling{5}{2} \frac{4!}{2!}= 0 + 15\cdot 12 = 180$;
the sums in Lemma \ref{2} are $\stirling {4}{2} = 7$ and $\stirling {4}{2}\frac{4!}{2!} = 84$.
\end{exa}

It is germane to consider another relationship between semigroups. Given the semigroup $S = N(X,K,m)$, we define its \emph{dual} to be the semigroup $S^{\ast} = N(X,K,m^{\ast})$,  where $m^{\ast}(x,y) = m(y,x)$; moreover,   
 $S$ is said to be \emph{self-dual} if $S \cong S^{\ast}$.  Commutative  semigroups, of course, are self-dual.  To express these relationships by means of partitions, we may employ the \emph{twist} map
$\tau\colon X\times X \to X\times X$
 defined by $(x,y)\tau = (y,x)$.  The twist extends to subsets $P\subseteq X\times X$ and partitions $\mathbf{P}\in \PP(X\times X)$ by the definitions
 $P\tau = \{(x,x')\colon (x',x)\in P\}$  and 
 $\mathbf{P}\tau = \{P_{0},P_{1},\dots , P_{k}\}\tau = \{P_{0}\tau, P_{1}\tau,\dots ,P_{k}\tau\}$.  Then $S = N(\mathbf{P})$ is 
 commutative if and only if $P_{i}\tau = P_{i}$ for all $P_{i}\in \mathbf{P}$, and  $S$ is self-dual if and only if $\mathbf{P}\tau$ and $\mathbf{P}$ are isomorphic.  More generally, $S_{1}$ and $S_{2}$ are \emph{anti-isomorphic} if $S_{1}^{\ast} \cong S_{2}$, equivalently $\mathbf{P}_{1}\tau$ and $ \mathbf{P}_{2}$ are isomorphic.  We study isomorphisms more deeply in the next section, and return to anti-isomorphisms later on.

\begin{exa}{2}\label{ex2}
A non-commutative self-dual semigroup already arises in Example \ref{ex1}.  Presentations (iii) and (iv)
are both isomorphic and anti-isomorphic; likewise (xiii) and (xiv).  It is possible that the isomorphism involved be the identity; an example occurs 
when $r=2=k$.  With $X=\{x,y\}$ and $S$ presented by $x^{2} = y^{2} =0$, we have $S = \{x,y,xy,yx,0\} = S^{\ast}$ as sets, but with multiplications $xy$ and $x\ast y$ respectively, related by $x\ast y = yx$.  Thus if $\pi$ is the transposition $(x,y)$, $S\pi = S^{\ast}$, or $S\cong S^{\ast}$, while $\mathbf{P} = \{(x,y),(y,x)\} = \mathbf{P}\tau = \mathbf{P}^{\ast}$ (in obvious notation).
\end{exa}


\section{Actions and automorphisms; semirigidity}

As foreshadowed above, and to consider the question of enumeration up to isomorphism, etc., we use, and need to distinguish, at least three actions of the symmetric group $\bS_r$: first there is its defining or natural action on $X$, denoted $x\mapsto x\pi$ for  $\pi\in \bS_r$.  
Next there is the following \emph{cartesian} (square) action\footnote{This is usually called the \emph{diagonal} action, but we need to refer later to the diagonal of $X\times X$ and its associated act.} of $\bS_r$ on $X\times X$:
namely,  $(x,x')\pi=(x\pi,x'\pi)$.  For brevity we call this the \emph{c-action}.
The c-action naturally extends to an action of $\bS_r$ on the set $\PP_k(X\times X)$ of all partial partitions of
$X\times X$ into $k$ classes: namely, if $\mathbf{P}=\{P_1,\dots,P_k\}$, we define
$$
\mathbf{P}\pi = \{P_1\pi,\dots,P_k\pi\},
$$
which is easily seen to be again a partial partition of rank $k$.  We may refer to this action as the \emph{p-action}.  
The importance of these actions is explained in the following Lemma.
\begin{lem}\label{orb}
We have $N(X,K,m)\cong N(X,K,m')$ if and only if $\mathbf{P}_m$ and $\mathbf{P}_{m'}$ belong to the same orbit of the action 
of $\bS_r$ on $\PP_k(X\times X)$.
\end{lem}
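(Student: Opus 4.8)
The plan is to prove the two implications separately, using the intrinsic description of the three distinguished subsets $X$, $K$, and $\{0\}$ of the carrier set that was recorded in the structural remarks above.

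For the forward implication, suppose $\phi\colon N(X,K,m)\to N(X,K,m')$ is an isomorphism. The first step is to observe that $\phi$ must respect the decomposition of the common carrier set $X\cup K\cup\{0\}$. Since $0$ is the unique zero, $\phi$ fixes it; since $S^{2}=K\cup\{0\}$ is determined by the multiplication it is preserved by $\phi$, whence (as $0$ is fixed) $\phi$ maps $K$ onto $K$; and since $X=S\setminus S^{2}$ is the unique minimal generating set, $\phi$ restricts to a permutation $\pi\in\bS_{r}$ of $X$. The second step is to read off what the homomorphism property says on generators: for $x,x'\in X$ we have $\phi(m(x,x'))=m'(x\pi,x'\pi)$. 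Because $\phi$ is a bijection fixing $0$ and carrying $K$ to $K$, this forces $(x,x')\in P_{0}$ precisely when $(x\pi,x'\pi)$ lies in the vacant block of $\mathbf{P}_{m'}$, and forces $(x,x')$, $(y,y')$ to lie in a common nonzero block of $\mathbf{P}_{m}$ precisely when their $\pi$-images lie in a common block of $\mathbf{P}_{m'}$. Hence the c-action of $\pi$ carries the blocks of $\mathbf{P}_{m}$ to those of $\mathbf{P}_{m'}$, i.e.\ $\mathbf{P}_{m}\pi=\mathbf{P}_{m'}$, so the two partial partitions lie in one $\bS_{r}$-orbit.

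For the converse, suppose $\mathbf{P}_{m}\pi=\mathbf{P}_{m'}$ for some $\pi\in\bS_{r}$. The c-action of $\pi$ then induces a bijection $\sigma$ of $\{1,\dots,k\}$ with $P_{j}\pi=P'_{\sigma(j)}$, and it also carries the vacant block to the vacant block. I would define $\phi$ to be $\pi$ on $X$, the identity on $0$, and $r+j\mapsto r+\sigma(j)$ on $K$; this is visibly a bijection of the common carrier set. It remains to check that $\phi$ is a homomorphism, which reduces to products of generators (all other products being $0$ in both semigroups, and preserved since $\phi(0)=0$ and the images under $\phi$ of elements of $K\cup\{0\}$ still multiply to $0$): for $(x,x')\in P_{j}$ one gets $\phi(xx')=r+\sigma(j)=m'(x\pi,x'\pi)=\phi(x)\phi(x')$, and for $(x,x')$ in the vacant block both sides are $0$.

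I expect the only genuinely delicate point to be the first step of the forward direction---that an isomorphism is forced to preserve $X$, $K$, and $\{0\}$ setwise---since everything afterwards is bookkeeping with the c-action. This relies squarely on the fact, noted above, that these three sets are canonically recoverable from the abstract semigroup ($0$ as the zero, $K\cup\{0\}$ as $S^{2}$, and $X$ as the unique minimal generating set $S\setminus S^{2}$), so that no isomorphism can mix them; the freedom to relabel $K$ via $\sigma$ is precisely what the passage to the (unordered) partial partition, and thence to its $\bS_{r}$-orbit, already absorbs.
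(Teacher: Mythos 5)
Your proof is correct and follows essentially the same route as the paper's: both directions use the intrinsic recoverability of $0$, $K$, and $X$ to extract a permutation $\pi\in\bS_{r}$ from an isomorphism (and conversely build $\phi$ piecewise from $\pi$ and the induced relabelling of $K$), then match blocks of $\mathbf{P}_{m}$ with blocks of $\mathbf{P}_{m'}$ via the c-action. If anything, you spell out the ``matter of routine'' verification in the converse more explicitly than the paper does.
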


\begin{proof}
Let $\phi:N(X,K,m)\to N(X,K,m')$ be an isomorphism. First of all, $\phi$ fixes the zero element $0$. Furthermore, as $X$ is
the unique minimal generating set of both semigroups involved, $\phi$ induces a permutation $\pi_\phi$ on $X$. This also implies 
that $K\phi=K$. Now clearly $m(x,x')\phi = m'(x\phi,x'\phi)$ for all $x,x'\in X$, implying that pairs $(x,x'),(y,y')$ belong 
to the same $\mathbf{P}_m$-class if and only if $(x\phi,x'\phi),(y\phi,y'\phi)$ belong  to the same $\mathbf{P}_{m'}$-class;
in other words, for all $1\leq j\leq k$, $P_j\phi$ is a $\mathbf{P}_{m'}$-class, and all $\mathbf{P}_{m'}$-classes arise in this way.
Hence, $\mathbf{P}_m\pi_\phi = \mathbf{P}_{m'}$, so $\mathbf{P}_m$ and $\mathbf{P}_{m'}$ belong to the same orbit of the 
 induced action on the set of partial partitions.

Conversely, assume there is a permutation $\pi$ of $X$ such that $\mathbf{P}_m\pi = \mathbf{P}_{m'}$. Then there is a permutation
$\widehat{\pi}$ of $\{1,\dots,k\}$ such that $P_j\pi=P'_{j\widehat{\pi}}$ for all $1\leq j\leq k$. To conclude, it is a matter of routine to
verify that the transformation $\phi$ of $X\cup K\cup\{0\}$ defined by
$$
a\phi = \begin{cases}
a\pi & \text{if } a\in X,\\
a\widehat{\pi} & \text{if }a\in K,\\
0 & \text{if } a=0
\end{cases}
$$
is in fact an isomorphism $N(X,K,m)\to N(X,K,m')$.
\end{proof}
There are several important corollaries to this result. 
\begin{cor}\label{auts}
\begin{enumerate}[(i)]
\item
This $\phi$ is an automorphism of $S = N(X,K,m)$ if and only if 
$\mathbf{P}_m\pi = \mathbf{P}_{m}$, which is to say that for each $i$, $P_{i}\pi = P_{j}$ for some $j$. 
\item
$S$ is self-dual if and only if $\mathbf{P}$ and $\mathbf{P}\tau$  lie in the same orbit of some $\pi$ in the p-action. 
\item If $S=N(\mathbf{P})$ is self-dual and $\pi\in\bS_{r}$, then $S\pi$ is self-dual. 
 \item
Isomorphism classes of 3-nilpotent semigroups of order $n$ and rank $r$ are in bijective correspondence with orbits of the induced action
of $\bS_r$ on $\PP_k(X\times X)$ with $r+k=n-1$.
\end{enumerate}
\end{cor}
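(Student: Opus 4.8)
The plan is to obtain all four parts as direct consequences of Lemma \ref{orb}, which already furnishes the dictionary between isomorphisms of the semigroups $N(X,K,m)$ and the p-action of $\bS_r$ on partial partitions; in each case I would specialize or reinterpret that dictionary. For part (i) I would set $m'=m$ in Lemma \ref{orb}. The map $\phi$ displayed in the proof of that lemma is then an isomorphism $N(X,K,m)\to N(X,K,m)$, i.e.\ an automorphism, exactly when its underlying permutation $\pi=\pi_\phi$ satisfies $\mathbf{P}_m\pi=\mathbf{P}_m$. Since $\mathbf{P}_m\pi=\{P_1\pi,\dots,P_k\pi\}$, this equality of partitions is precisely the condition that $\pi$ permutes the blocks, namely that for each $i$ we have $P_i\pi=P_j$ for some $j$, which is the stated reformulation.

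For part (ii) I would first recall the identification made just before Example \ref{ex2}, that the dual $S^{\ast}=N(X,K,m^{\ast})$ has associated partition $\mathbf{P}_{m^{\ast}}=\mathbf{P}\tau$. Self-duality means $S\cong S^{\ast}$, so applying Lemma \ref{orb} with $m'=m^{\ast}$ yields $S\cong S^{\ast}$ if and only if $\mathbf{P}$ and $\mathbf{P}\tau$ lie in one orbit of the p-action, that is, $\mathbf{P}\pi=\mathbf{P}\tau$ for some $\pi\in\bS_r$, as claimed.

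Part (iii) is the one step needing an idea beyond Lemma \ref{orb}, and I expect it to be the main (if modest) obstacle. Here $S\pi$ denotes $N(\mathbf{P}\pi)$. The key observation is that the twist commutes with the c-action, and hence with the p-action: from $(x,x')\pi\tau=(x'\pi,x\pi)=(x,x')\tau\pi$ one gets $(\mathbf{Q}\pi)\tau=(\mathbf{Q}\tau)\pi$ for every partition $\mathbf{Q}$. Given self-duality of $S$, part (ii) supplies $\sigma$ with $\mathbf{P}\sigma=\mathbf{P}\tau$. Taking $\rho=\pi^{-1}\sigma\pi$ and using that the p-action is a right action, I would compute
$$(\mathbf{P}\pi)\rho=\mathbf{P}(\pi\rho)=\mathbf{P}(\sigma\pi)=(\mathbf{P}\sigma)\pi=(\mathbf{P}\tau)\pi=(\mathbf{P}\pi)\tau,$$
so $S\pi$ is self-dual by part (ii). Conceptually this merely records that self-duality is an isomorphism invariant and $S\pi\cong S$, but the conjugation makes the statement explicit at the level of presentations.

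Finally, for part (iv) I would assemble the bijection. Lemma \ref{orb} shows that $\mathbf{P}\mapsto N(\mathbf{P})$ sends two partial partitions of rank $k$ to isomorphic semigroups if and only if they lie in the same $\bS_r$-orbit, so it descends to a well-defined injection from orbits to isomorphism classes. Surjectivity is exactly the earlier remark that every $3$-nilpotent semigroup of order $n$ can be written, up to isomorphism, as $N(X,K,m)$ with $|X|=r$ and $|K|=k=n-1-r$; thus every isomorphism class of rank $r$ is attained. Hence orbits of the p-action on $\PP_k(X\times X)$ correspond bijectively to isomorphism classes of order $n$ and rank $r$, with $r+k=n-1$.
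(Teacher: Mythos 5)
Your proposal is correct and follows the route the paper intends: the paper states this corollary without a separate proof, precisely because all four parts are read off from Lemma \ref{orb} (specializing $m'=m$ for (i), $m'=m^{\ast}$ with $\mathbf{P}_{m^{\ast}}=\mathbf{P}\tau$ for (ii), and using the well-definedness/surjectivity of $\mathbf{P}\mapsto N(\mathbf{P})$ for (iv)), exactly as you do. Your verification for (iii) that $\tau$ commutes with the p-action, combined with the conjugation $\rho=\pi^{-1}\sigma\pi$, is the same device the paper itself uses later (e.g.\ the sub-act observation $\mathbf{P}\sigma\pi^{\sigma}\tau=\mathbf{P}\sigma$ in Section 8), so nothing is missing.
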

 
In Section 1 we mentioned Grillet's study of the rigid members of $\mathscr{N}_{3}$  \cite{PAG2}.  
 It follows from (the proof of) Lemma \ref{orb} that rigidity is a property of the partition $\mathbf{P}$, and that $\mathbf{P}$ is {rigid} if and only if $\mathbf{P}\pi = \mathbf{P}$ implies $\pi = \text{id}$.  

Here we will be interested in a property weaker in general than rigidity: we say a semigroup $S$ is \emph{semirigid} if each automorphism of $S$ fixes every element of $S^{2}$.
(Of course the property is only distinctive if $S^{2}\neq S$.)

The definition is equivalent to the condition that each automorphism $\pi$ satisfies $(xy)\pi = xy$. 
So a  3-nilpotent semigroup
$S=N(\mathbf{P})$, or its partial partition  
$\mathbf{P} = \{P_{1},\dots,P_{k}\}$, is {semirigid} if and only if $\mathbf{P}\pi = \mathbf{P}$ implies that for each $i$, 
$P_{i}\pi = P_{i}$,   i.e., that each $\pi\in \bS_{r}$ fixes each block of  $\mathbf{P}$. 
Clearly a rigid $\mathbf{P}$ is semirigid, and it follows that  the number $s_{n}$ of semirigid $n$-element members of $\mathscr{N}_{3}$ is a lower bound for $t_{n}$ which improves on $r_{n}$. 
 
\begin{exa}{\ref{ex1} (continued)} \label{ex1-cont}
In the list for $n = 4$, it may be seen that cases (v), (x) and (xv) have the transposition $(x,y)$ as a non-identity automorphism, and so are not rigid.  The transposition however preserves each block,  so all three are semirigid.  The remaining cases are all rigid.
\end{exa}

\begin{lem}\label{subacts}
 Let $\mathbf{P}$ be [semi]rigid and $\pi\in \bS_{r}$.  Then $\mathbf{P}\pi$ is  [semi]rigid.
\end{lem}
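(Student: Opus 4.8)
The plan is to exploit the fact that the p-action of $\bS_{r}$ on partial partitions is a genuine (right) group action, so that partitions lying in a common orbit have conjugate stabilizers. Writing $\mathrm{Stab}(\mathbf{P}) = \{\sigma\in\bS_{r} : \mathbf{P}\sigma = \mathbf{P}\}$, I would first record the conjugacy relation
$$\mathrm{Stab}(\mathbf{P}\pi) = \pi^{-1}\,\mathrm{Stab}(\mathbf{P})\,\pi.$$
Indeed, $\sigma$ stabilizes $\mathbf{P}\pi$ precisely when $\mathbf{P}(\pi\sigma) = \mathbf{P}\pi$, equivalently (applying $\pi^{-1}$ on the right) when $\mathbf{P}(\pi\sigma\pi^{-1}) = \mathbf{P}$, i.e. $\pi\sigma\pi^{-1}\in\mathrm{Stab}(\mathbf{P})$; this is just associativity of the action together with invertibility of $\pi$. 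Both notions under consideration are phrased in terms of this stabilizer, via Corollary \ref{auts}(i) and the definition of semirigidity, so the whole statement reduces to checking that each property is preserved under conjugating the stabilizer by $\pi$.

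For the rigid case this is immediate: rigidity of $\mathbf{P}$ says $\mathrm{Stab}(\mathbf{P}) = \{\mathrm{id}\}$, and the conjugacy relation then forces $\mathrm{Stab}(\mathbf{P}\pi) = \pi^{-1}\{\mathrm{id}\}\pi = \{\mathrm{id}\}$, so $\mathbf{P}\pi$ is rigid.

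The semirigid case requires slightly more bookkeeping, since semirigidity is a statement about individual blocks rather than about the whole stabilizer being trivial. Here I would take an arbitrary $\sigma\in\mathrm{Stab}(\mathbf{P}\pi)$ and set $\rho = \pi\sigma\pi^{-1}$, which lies in $\mathrm{Stab}(\mathbf{P})$ by the relation above. Semirigidity of $\mathbf{P}$ gives $P_{i}\rho = P_{i}$ for every block $P_{i}$. Since the blocks of $\mathbf{P}\pi$ are exactly the sets $P_{i}\pi$, I would then compute, for each $i$, using $\sigma=\pi^{-1}\rho\pi$ and $P_{i}\rho=P_{i}$,
$$(P_{i}\pi)\sigma = P_{i}(\pi\sigma) = P_{i}(\rho\pi) = (P_{i}\rho)\pi = P_{i}\pi.$$
Thus every block of $\mathbf{P}\pi$ is fixed by $\sigma$, and as $\sigma$ was arbitrary, $\mathbf{P}\pi$ is semirigid.

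The only genuinely delicate step is this last computation: one must keep the right-action convention consistent and verify that fixing a block $P_{i}\pi$ of $\mathbf{P}\pi$ under $\sigma$ is the same as fixing the block $P_{i}$ of $\mathbf{P}$ under the conjugate $\rho$. Conceptually, nothing deeper is occurring: by Lemma \ref{orb} the semigroups $N(\mathbf{P})$ and $N(\mathbf{P}\pi)$ are isomorphic, and both rigidity and semirigidity are invariants of the abstract semigroup (for semirigidity, an isomorphism carries $S^{2}$ onto $(S')^{2}$ and conjugates automorphism groups, so pointwise fixing of $S^{2}$ transfers). One could therefore alternatively dispose of the lemma by that observation; I would nonetheless present the direct stabilizer computation, since it is short and entirely self-contained.
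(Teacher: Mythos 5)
Your proof is correct and is essentially the paper's own argument: the paper's one-line implication $\mathbf{P}\pi\sigma = \mathbf{P}\pi \Longrightarrow \mathbf{P}\pi\sigma\pi^{-1} = \mathbf{P}$ is exactly your stabilizer-conjugation relation, and its subsequent step $P_{i}\pi\sigma\pi^{-1} = P_{i}$, hence $P_{i}\pi\sigma = P_{i}\pi$ for all $i$, is identical to your blockwise computation. The differences are purely presentational (you package the argument in stabilizer language and append an optional isomorphism-invariance remark), so nothing further is needed.
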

\begin{proof}
Let $\sigma\in \bS_{r}$.  Then
$ \mathbf{P}\pi\sigma = \mathbf{P}\pi \Longrightarrow \mathbf{P}\pi\sigma\pi^{-1} = \mathbf{P}$.  So if 
$\mathbf{P}$ is rigid, $\pi\sigma\pi^{-1} = \text{id}$ and $\sigma = \text{id}$, whence  $\mathbf{P}\pi$ is rigid; whereas if $\mathbf{P}= \{P_{1},\dots, P_{k}\}$ is semirigid, $P_{i}\pi\sigma\pi^{-1} = P_{i}$ and $P_{i}\pi\sigma = P_{i}\pi$ for all $i$, whence $\mathbf{P}\pi$ is semirigid. 
\end{proof}
It is convenient to extend this nomenclature, and say that $\pi$ \emph{acts semirigidly} on $\mathbf{P}$ if 
$P_{i}\pi = P_{i}$ for each block $P_{i}$ of $\mathbf{P}$.  If $\mathbf{P}$ is indeed semirigid, then any $\pi\in\bS_{r}$ acts rigidly on it; but a $\pi$ may act semirigidly on a partition which is not semirigid.   At this stage we do not have a characterisation of [semi]rigid partitions; all we know is that, whatever they look like, they form, by Lemma \ref{subacts}, a subact. 


\section{{Orbit counting and partial partitions stabilised by $\pi$}}

From Corollary \ref{auts}(iv), we know that counting the distinct isomorphism classes requires counting the orbits of the action on partitions. 
For this we need the orbit-counting formula (see e.g.\ \cite[Theorem 2.2]{PJC}), associated with the names of Burnside and Frobenius, which declares that the number of orbits equals the average, over group elements, of the sizes of their sets of fixed points. 
In passing, we note the following result ({\it cf.} a bound in the proof of Theorem 4.5 in Grillet's \cite{PAG2}):
\begin{cor}\label{tn/n!}
The quantity $t_{n}/n! = \sum_{r=1}^{n-2}\frac{1}{r!}\stirling{r^{2} + 1}{n-r}$ is a lower bound for $\bar{t}_{n}$.
\end{cor}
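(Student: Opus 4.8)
The plan is to exploit Corollary \ref{auts}(iv), which identifies the isomorphism classes of order $n$ and rank $r$ with the orbits of $\bS_{r}$ acting on $\PP_{k}(X\times X)$ (where $k=n-r-1$), and then to apply the crudest available orbit bound rank by rank. First I would note that the rank $r$ is an isomorphism invariant, being the cardinality of the unique minimal generating set $S\setminus S^{2}$. Hence $\bar{t}_{n}$ decomposes as a sum over $r$ of the number of isomorphism classes of order $n$ and rank $r$, and by Corollary \ref{auts}(iv) each such summand is exactly the number of $\bS_{r}$-orbits on $\PP_{k}(X\times X)$.

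Next, for a fixed $r$ I would bound the number of orbits from below. By the orbit--stabiliser theorem every orbit has size dividing $|\bS_{r}|=r!$, and so has size at most $r!$; thus if the acted-on set has cardinality $N$ and splits into $M$ orbits, then $N=\sum_{i}|O_{i}|\leq M\cdot r!$, giving $M\geq N/r!$. Here $N$ is the number of partial partitions of the $r^{2}$-set $X\times X$ into $k$ classes, which by Lemma \ref{nppart} equals $\stirling{r^{2}+1}{k+1}=\stirling{r^{2}+1}{n-r}$. Therefore the number of isomorphism classes of rank $r$ is at least $\frac{1}{r!}\stirling{r^{2}+1}{n-r}$.

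Finally I would sum these per-rank bounds over $r=1,\dots,n-2$ and compare with Lemma \ref{1}. Since $t_{n}=\sum_{r=1}^{n-2}\frac{n!}{r!}\stirling{r^{2}+1}{n-r}$, dividing by $n!$ gives $t_{n}/n!=\sum_{r=1}^{n-2}\frac{1}{r!}\stirling{r^{2}+1}{n-r}$, and the desired inequality $\bar{t}_{n}\geq t_{n}/n!$ follows termwise. There is no substantial obstacle here: the whole argument is just orbit--stabiliser applied in each rank. The only point worth a word of care is that adding the per-rank bounds is legitimate precisely because distinct ranks yield disjoint families of isomorphism classes; equivalently, one can read the bound as saying that the $n!$-fold overcount inherent in passing from labelled objects to orbits is at most $n!$, which is the same estimate alluded to in the proof of Grillet's Theorem 4.5.
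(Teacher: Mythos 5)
Your proof is correct, and its skeleton matches the paper's: decompose $\bar{t}_{n}$ by rank (an isomorphism invariant), identify the rank-$r$ classes with $\bS_{r}$-orbits on $\PP_{k}(X\times X)$ via Corollary \ref{auts}(iv), count the partial partitions by Lemma \ref{nppart}, and bound the number of orbits below by $\frac{1}{r!}\stirling{r^{2}+1}{n-r}$. Where you genuinely diverge is in how that orbit bound is justified. You use the orbit--stabiliser theorem: every orbit has size at most $|\bS_{r}|=r!$, so $N\leq M\cdot r!$ and $M\geq N/r!$. The paper instead invokes the orbit-counting (Burnside--Frobenius) formula and simply discards the non-identity terms: the identity fixes all $\stirling{r^{2}+1}{n-r}$ partial partitions and every other term in the average is nonnegative, so the number of orbits, being that average, is at least $\frac{1}{r!}\stirling{r^{2}+1}{n-r}$. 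The two routes yield the identical per-rank bound; yours is marginally more elementary, since it needs no counting formula at all. The paper's choice is deliberate, however: this corollary is placed immediately after the orbit-counting formula is introduced precisely because the same formula is then refined in Proposition \ref{exact} and Theorem \ref{count}, where the non-identity terms dropped here are actually evaluated --- so the paper's proof is the degenerate case of its main computational method. Your closing aside is also worth noting: the observation that each isomorphism class contains at most $n!$ labelled semigroups on a fixed $n$-set gives $t_{n}\leq n!\,\bar{t}_{n}$ directly, which is an even shorter complete proof of the stated inequality, though it bypasses the partition machinery (and hence the per-rank refinement) entirely.
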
  
\begin{proof}
 Consider the action of the symmetric group $\bS_r$ on the presentations $\mathbf{P}$ with $r$ generators.  The identity of $\bS_r$ fixes all such, and so the average number of fixed partitions is at least $\frac{1}{r!}\stirling{r^{2} + 1}{n-r}$. This is therefore a lower bound for the number of isomorphism classes of rank $r$. 
\end{proof}

To proceed, we must study the partitions $\mathbf{P}$ fixed by a certain $\pi\in \bS_{r}$ in the p-action.  
We begin slightly more generally by describing the partitions fixed in any group action.  
Let $\bS$ be a group acting on a set $Y$ and let the action be extended as before to the [partial] partitions of $Y$.
Fix a $\pi \in \bS$;
we shall construct a partial partition of $Y$ stabilised by $\pi$, and later show all such are of this form.
\medskip

\noindent \textbf{Construction.}  Let us introduce a useful notion.    Given a set $G$ of cycles $\Gamma$ of the action of $\pi$, let $d$ be a common divisor of the lengths of those cycles.   
Let $E$ be a cross-section of $G$ (i.e., a set containing exactly one member of each $\Gamma \in G$), and define $F_{0} = \bigcup\{E\pi^{dq}\colon q\in \mathbb{N}\}$.  Consider the sets $F_{i} = F_{0}\pi^{i}$, for integers $i$ with $0\leq i < d$.
\medskip

\begin{lem}\label{simple}  
 $\mathbf{F} = \{F_{0}, \dots, F_{d-1}\}$ is a (proper) partition of \,$\bigcup G$ that is stabilised by $\pi$.  
\end{lem}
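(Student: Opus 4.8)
The plan is to reduce everything to a single cycle and exploit the cyclic structure of $\pi$ on it. First I would fix notation: for each cycle $\Gamma \in G$ let $\ell_{\Gamma} = |\Gamma|$ be its length and let $e_{\Gamma}$ be the unique representative of $\Gamma$ lying in the cross-section $E$, so that $\Gamma = \{e_{\Gamma}\pi^{j} : 0 \le j < \ell_{\Gamma}\}$ and $e_{\Gamma}\pi^{\ell_{\Gamma}} = e_{\Gamma}$. Since every power of $\pi$ permutes each cycle among itself, I would observe that all the sets $F_{i}$ respect the cycle decomposition, namely $F_{i} = \bigcup_{\Gamma \in G}(F_{i}\cap \Gamma)$, and I would compute each intersection explicitly. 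By definition $F_{0}\cap \Gamma = \{e_{\Gamma}\pi^{dq} : q\in\N\}$, which (because $d \mid \ell_{\Gamma}$) is exactly the set of points of $\Gamma$ whose exponent relative to $e_{\Gamma}$ is divisible by $d$; applying $\pi^{i}$ then gives $F_{i}\cap \Gamma = \{e_{\Gamma}\pi^{j} : 0\le j < \ell_{\Gamma},\ j \equiv i \pmod{d}\}$. This ``residue-mod-$d$'' description is the workhorse of the whole proof.

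Once it is in hand, the three partition axioms are immediate and I would verify them in turn. \emph{Covering}: any $z \in \bigcup G$ lies in a unique $\Gamma$ and equals $e_{\Gamma}\pi^{j}$ for a unique $0\le j < \ell_{\Gamma}$; writing $j \equiv i \pmod{d}$ with $0\le i < d$ places $z$ in $F_{i}$. \emph{Disjointness}: if $z \in F_{i}\cap F_{i'}$, then within its cycle its exponent $j$ satisfies $j\equiv i$ and $j\equiv i'\pmod{d}$, forcing $i = i'$ since $0\le i,i' < d$. \emph{Non-emptiness}: each $F_{i}$ contains $e_{\Gamma}\pi^{i}$ for every $\Gamma \in G$, and $G\neq\es$. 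Hence $\mathbf{F}$ consists of exactly the $d$ distinct, non-empty, pairwise disjoint blocks $F_{0},\dots,F_{d-1}$, i.e.\ it is a proper partition of $\bigcup G$.

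For the stabilisation claim I would simply track the shift $F_{i}\mapsto F_{i}\pi$. For $0\le i < d-1$ one has $F_{i}\pi = F_{0}\pi^{i+1} = F_{i+1}$ directly from the definition $F_{i} = F_{0}\pi^{i}$. The only point needing care---and the main (if modest) obstacle---is the wrap-around $F_{d-1}\pi = F_{0}$, i.e.\ showing $F_{0}\pi^{d} = F_{0}$; here I cannot argue via a single global period, because the cycles in $G$ may have different lengths $\ell_{\Gamma}$. Instead I would work inside each $\Gamma$ separately: $(F_{0}\cap\Gamma)\pi^{d} = \{e_{\Gamma}\pi^{d(q+1)} : q\in\N\}$, and since the exponents divisible by $d$ form the cyclic subgroup $\{0,d,\dots,\ell_{\Gamma}-d\}$ of $\mathbb{Z}/\ell_{\Gamma}\mathbb{Z}$---which is closed under adding $d$---this set is again $F_{0}\cap\Gamma$. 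Taking the union over $\Gamma\in G$ yields $F_{0}\pi^{d} = F_{0}$. Thus $\pi$ permutes $\{F_{0},\dots,F_{d-1}\}$ cyclically, so $\mathbf{F}\pi = \mathbf{F}$ and $\mathbf{F}$ is stabilised by $\pi$, completing the argument.
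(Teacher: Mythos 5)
Your proof is correct and follows essentially the same approach as the paper's: both arguments rest on modular arithmetic of exponents within cycles, using $d\mid\ell_{\Gamma}$ to get disjointness (via $d\mid(i-i')$ with $|i-i'|<d$), covering (writing any exponent as $dq+i$), and cyclic stabilisation. Your per-cycle ``residue mod $d$'' bookkeeping just makes explicit what the paper does tersely, and your careful verification of the wrap-around $F_{d-1}\pi=F_{0}$ fills in a step the paper merely asserts.
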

\begin{proof}
If $x\in F_{i}\cap F_{i'}$, so $x\pi^{qd+i} = x\pi^{q'd+i'}$, whence $x = x\pi^{(q-q')d+i-i'}$ and $d\mid (i-i') < d$; thus $i = i'$.  For any $x\in E$, $x\pi^{dq+i}\in F_{i}$ and so the cycle $\langle x\rangle \subseteq \bigcup F_{i}\subseteq \bigcup G$.  But $\bigcup G$ is the union of all such cycles $\langle x\rangle$, so $\bigcup F_{i} =  \bigcup G$.  Finally, $\mathbf{F}\pi = \mathbf{F}$ since $F_{d-1}\pi = F_{0}$.
\end{proof}
Consequently, $\mathbf{F}$ is a partial partition of $Y$ of rank $d$, which we call a \emph{frieze}.  
Given $G$, $\mathbf{F}$ is determined by $E$ and $d$, but not uniquely.   In passing we note that $\pi$ semirigidly fixes  $\mathbf{F}$ if and only if $d=1$.

Suppose $\lvert G\rvert = \lvert E \rvert = t$ and the cycle lengths are $\lambda_{e}$ for $1\leq e \leq t$.  We shall call  
$d$ the \emph{modulus}.  Immediately we have the next counting result.
\begin{lem}\label{nchoice}
The number of distinct friezes arising from $E$ and $d$ is $d^{t-1}$.
\end{lem}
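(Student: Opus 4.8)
The plan is to coordinatize the cross-sections by residues modulo $d$ inside each cycle, observe that the frieze depends only on those residues, and then identify distinct friezes with orbits of a free cyclic shift action on tuples of residues.

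First I would fix coordinates on each cycle $\Gamma_e\in G$: choose a base point $z_e$ so that $\Gamma_e=\{z_e,z_e\pi,\dots,z_e\pi^{\lambda_e-1}\}$, and record the cross-section element in $\Gamma_e$ by its offset $a_e\in\{0,\dots,\lambda_e-1\}$, i.e.\ $E\cap\Gamma_e=\{z_e\pi^{a_e}\}$. Since $d\mid\lambda_e$, the set $\{a_e+dq \bmod \lambda_e\colon q\in\mathbb{N}\}$ is exactly the class of positions congruent to $a_e$ modulo $d$, so that $F_0\cap\Gamma_e=\{z_e\pi^j\colon j\equiv a_e \pmod d\}$, and applying $\pi^i$ gives $F_i\cap\Gamma_e=\{z_e\pi^j\colon j\equiv a_e+i \pmod d\}$. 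Thus each block $F_i$ selects from $\Gamma_e$ precisely the residue class $a_e+i \pmod d$, and the whole ordered frieze $(F_0,\dots,F_{d-1})$ is determined by the reduced tuple $(a_1\bmod d,\dots,a_t\bmod d)\in(\mathbb{Z}/d)^t$. Conversely the tuple is recovered from $F_0$, as $a_e\bmod d$ is the residue class that $F_0$ occupies in $\Gamma_e$; and as $E$ ranges over all cross-sections every such tuple occurs (each residue of $a_e$ is attained because $d\mid\lambda_e$). Hence there are exactly $d^t$ ordered friezes.

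Next I would pass from ordered friezes to the unordered partitions $\mathbf{F}=\{F_0,\dots,F_{d-1}\}$ that the lemma actually counts. Replacing each $a_e$ by $a_e+c$ for a fixed $c\in\mathbb{Z}/d$ sends $F_i$ to the old block $F_{i+c}$ (indices modulo $d$), leaving the set $\{F_0,\dots,F_{d-1}\}$ unchanged; so the diagonal shift $(a_e)\mapsto(a_e+c)$ preserves the unordered frieze. For the converse I would show that two tuples giving the same unordered frieze must differ by such a global shift: choosing any block $B$, the differences $(a_e-a_1)\bmod d$ are read off as the residue $B$ occupies in $\Gamma_e$ minus the residue it occupies in $\Gamma_1$, and these differences are the same whichever block $B$ one picks, so the tuple is pinned down up to a single additive constant. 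Therefore distinct unordered friezes correspond bijectively to orbits of the diagonal shift action of $\mathbb{Z}/d$ on $(\mathbb{Z}/d)^t$.

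Finally, this action is free, since $(a_e+c)=(a_e)$ forces $c\equiv0\pmod d$; every orbit has size exactly $d$, so the number of orbits, and hence the number of distinct friezes, is $d^t/d=d^{t-1}$, as claimed. The genuinely delicate step is the converse in the third paragraph—verifying that equality of the unordered partitions forces the offset tuples to agree up to one \emph{global} shift rather than cycle-dependent shifts—since this is exactly what produces the exponent $t-1$ instead of $t$ (or instead of a single frieze).
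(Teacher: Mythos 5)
Your proof is correct and follows essentially the same route as the paper's: both arguments establish that there are exactly $d^{t}$ distinct choices of $F_{0}$ (the paper by counting the $\prod\lambda_{e}$ cross-sections and the $\prod(\lambda_{e}/d)$-element fibers over each $F_{0}$, you by residue coordinates in $(\mathbb{Z}/d)^{t}$), and then divide by $d$ because the $d$ cyclic relabelings of the blocks produce the same unordered partition. Your third paragraph, showing that equality of unordered friezes forces the offset tuples to differ by a single global shift, is just a rigorous rendering of the paper's terse closing claim that exactly $d$ choices of $F_{0}$ yield the same $\mathbf{F}$.
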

\begin{proof}
Each block of $\mathbf{F}$ has $\lambda_{e}/d$ elements of $\Gamma_{e}$.  There are $\prod \lambda_{e}$ choices of cross-section $E$, of which $\prod (\lambda_{e}/d)$ give the same $F_{0}$.  Thus there are $d^{t}$ distinct choices of $F_{0}$, but $d$ of these give the same cycle 
$\mathbf{F}$.  
\end{proof}
We now extend Lemma \ref{simple}.  Let the cycles of $Y$ be partially partitioned into blocks $G_{\alpha}$ for $\alpha$ in some index set $A$; cross-sections $E_{\alpha}$ and moduli  $d_{\alpha}$ are indexed likewise.
 Construct as above a partition $\mathbf{F}_{\alpha}$ of $\bigcup G_{\alpha}$, and let $\mathbf{P}$ be a partial partition of $Y$ having the set of blocks  
$\bigcup_{\alpha\in A}\mathbf{F}_{\alpha}$ (recalling that the $\bigcup G_{\alpha}$ are pairwise disjoint).   
Then $\mathbf{P}$ is the disjoint union of the friezes determined by the $\mathbf{F}_{\alpha}$.
It is stabilised by $\pi$ and has $\sum_{\alpha} d_{\alpha}$ blocks. 
We write $\mathbf{P} = \mathbf{P}(\{E_{\alpha}\}, \{d_{\alpha}\})$ to show dependence on the choices determining $\mathbf{P}$.  
Given the $E_{\alpha}$ (which determines $G_{\alpha}$) and $d_{\alpha}$, the number of such distinct partitions  $\mathbf{P}$ is $\prod_{\alpha} d_{\alpha}^{t_{\alpha}-1}$ by Lemma \ref{nchoice}.  Now to the point: 

\begin{lem}\label{char}
Let $\mathbf{P}$ be a 
partial partition on $Y$ fixed by $\pi$.  Then $\mathbf{P}$ is  of the form  
$\mathbf{P} = \mathbf{P}(\{E_{\alpha}\}, \{d_{\alpha}\})$.
\end{lem}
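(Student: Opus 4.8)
The plan is to run the Construction in reverse: since $\pi$ acts on the blocks of $\mathbf{P}$ as a permutation, its orbits on the block set should become precisely the individual friezes $\mathbf{F}_\alpha$. First I would observe that $\mathbf{P}\pi=\mathbf{P}$ means the cyclic group $\langle\pi\rangle$ permutes the blocks of $\mathbf{P}$, so its orbits partition the block set; index these orbits by $\alpha\in A$. For each $\alpha$ I would pick one block $B^{(\alpha)}_0$ of the orbit and let $d_\alpha$ be the orbit length, so that the orbit is exactly $\{B^{(\alpha)}_i : 0\leq i<d_\alpha\}$ with $B^{(\alpha)}_i=B^{(\alpha)}_0\pi^i$ and $B^{(\alpha)}_0\pi^{d_\alpha}=B^{(\alpha)}_0$. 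The union $U_\alpha=\bigcup_i B^{(\alpha)}_i$ is $\pi$-invariant, hence a union of cycles of $\pi$; let $G_\alpha$ be this set of cycles. As distinct blocks are disjoint, the $U_\alpha$, and therefore the $G_\alpha$, are pairwise disjoint, with $\bigcup_\alpha U_\alpha=\bigcup\mathbf{P}$.

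The crux is to check that the data $(G_\alpha,d_\alpha)$ genuinely fit the Construction and reproduce the orbit. I would first verify the divisibility condition: for a cycle $\Gamma\in G_\alpha$ of length $\lambda$ and any $y\in\Gamma$ with $y\in B^{(\alpha)}_i$, tracking block membership under powers of $\pi$ gives $y\pi^j\in B^{(\alpha)}_{(i+j)\bmod d_\alpha}$, so $y\pi^\lambda=y$ forces $d_\alpha\mid\lambda$. Hence $d_\alpha$ is a common divisor of the cycle lengths in $G_\alpha$, as the Construction requires, and moreover each cycle of $G_\alpha$ meets each block $B^{(\alpha)}_i$ in exactly $\lambda/d_\alpha$ points; in particular every cycle meets the base block $B^{(\alpha)}_0$.

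Finally I would recover the frieze. Choosing the cross-section $E_\alpha$ so that its representative of each cycle lies in $B^{(\alpha)}_0$, which is possible by the last observation, I would show $F_0=\bigcup\{E_\alpha\pi^{d_\alpha q}:q\in\N\}=B^{(\alpha)}_0$: the inclusion $F_0\subseteq B^{(\alpha)}_0$ is immediate since $d_\alpha q\equiv 0\pmod{d_\alpha}$, while cycle-by-cycle the points $e,e\pi^{d_\alpha},e\pi^{2d_\alpha},\dots$ exhaust $\Gamma\cap B^{(\alpha)}_0$, giving equality. Then $F_i=F_0\pi^i=B^{(\alpha)}_i$, so the frieze $\mathbf{F}_\alpha$ of Lemma \ref{simple} is exactly the $\alpha$-orbit of blocks, and assembling over $\alpha$ yields $\mathbf{P}=\mathbf{P}(\{E_\alpha\},\{d_\alpha\})$. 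I expect the main obstacle to be this last bookkeeping step, namely pinning down that the chosen cross-section regenerates $B^{(\alpha)}_0$ exactly, neither more nor less, together with the divisibility argument underpinning it; once these are in hand, the decomposition into friezes and the appeal to Lemma \ref{simple} are routine.
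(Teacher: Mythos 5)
Your proposal is correct and follows essentially the same route as the paper's own proof: decompose the block set of $\mathbf{P}$ into orbits of $\langle\pi\rangle$, take $d_{\alpha}$ to be the orbit length, deduce that $d_{\alpha}$ divides the cycle lengths, and choose a cross-section inside a base block so that the resulting frieze reproduces the orbit. The only differences are cosmetic — you prove divisibility by tracking a single element through the blocks rather than by the paper's counting of intersection cardinalities, and you spell out more carefully than the paper does that the cross-section regenerates the base block exactly — but the underlying argument is the same.
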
 
\begin{proof} 
By assumption, $\pi$ acts on the blocks of $\mathbf{P}$.  Let the cycles of this action be denoted by $\Delta_{\alpha}$, indexed by $\alpha\in \mathscr{A}$.   
Let $Q$ be a block of the cycle $\Delta_{\alpha}$,   and consider 
$R_{\alpha} = \bigcup \Delta_{\alpha} = \cup_{i\in \mathbb{N}}Q\pi^{i}$. 
Let $d_{\alpha}$ be the smallest $i > 0$ such that $Q\pi^{i} = Q$.  $R_{\alpha}$ is a union of cycles in the action on $Y$, $R_{\alpha} = \cup_{j\in J} \Gamma_{j}$ say, for $j$ in some implicit index set $J = J_{\alpha}$.  Now for each $j$, $Q\pi^{i}\cap \Gamma_{j}$ has cardinality independent of $i$, say $c_{j}$.  Then $c_{j}d_{\alpha} = \lambda_{j}$, 
 the length of $\Gamma_{j}$; thus $d_{\alpha}$ is a common divisor of the $\lambda_{j} (j\in J)$.  
 
For each $j\in J$, let $x_{j}\in Q\cap \Gamma_{j}$, and consider  $E_{\alpha} = \{x_{j}\colon j\in J\}$. Then $E_{\alpha}$ is a cross-section of the $\Gamma_{j}$, and $Q$ is a block of a frieze $ \mathbf{F}_{\alpha}$ (say), and thus of the disjoint union of the $ \mathbf{F}_{\alpha}$, to wit,  
$\mathbf{P}(\{E_{\alpha}\}, \{d_{\alpha}\})$.

Conversely, take a block $Q'$ of one of the friezes of $\mathbf{P}(\{E_{\alpha}\}, \{d_{\alpha}\})$ constructed as above.  By construction, $Q'$ is a block of one of the cycles of $\mathbf{P}$.  It now follows that $\mathbf{P} = \mathbf{P}(\{E_{\alpha}\}, \{d_{\alpha}\})$. 
\end{proof}
We may now apply Lemma \ref{char} to the action of $\bS_r$ on the set $\PP(X\times X)$ of all partial partitions of
$X\times X$. For brevity we call a cycle of the cartesian action a \emph{c-cycle} to distinguish it from a cycle in the defining action, which we may call an \emph{x-cycle} for emphasis. 
\begin{pro}\label{the list}
Let $\mathscr{G}$ be the set of partial partitions $\mathbf{G} = \{G_{\alpha}\}$  of the c-cycles of $\pi$, indexed by $\alpha\in \mathscr{A}$. (Our notation is abbreviated for simplicity: $\mathscr{A}$ depends on  $\mathbf{G}\in \mathscr{G}$.)
\begin{enumerate}[(i)]
\item The number of partial partitions of $X\times X$  fixed by $\pi\in \bS_{r}$ is  
$$
\sum_{\{G_{\alpha}\} \in \mathscr{G}}\prod_{\alpha}d_{\alpha}^{t_{\alpha}-1}.
$$
\item The number of partial partitions of $X\times X$ semirigidly fixed by $\pi\in \bS_{r}$ is  $$\sum_{\{G_{\alpha}\} \in \mathscr{G}}1 = | \mathscr{G}|.$$
\item The number of semirigid partial partitions of $X\times X$  fixed by $\pi\in \bS_{r}$ is bounded above by 
$ | \mathscr{G}|.$
\item The number of semirigid partial partitions of rank $k$ fixed by $\pi\in \bS_{r}$ is bounded above by the number of members of $\mathscr{G}$ having $k =\vert \mathscr{A}\vert$ blocks.
\end{enumerate}
\end{pro}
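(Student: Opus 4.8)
The plan is to read off all four counts from the canonical form of a $\pi$-fixed partial partition supplied by Lemma~\ref{char}, applied to $Y = X\times X$ under the c-action of $\bS_r$, so that the ``cycles of $Y$'' are precisely the c-cycles of $\pi$. Given a partial partition $\mathbf{P}$ fixed by $\pi$, the p-action of $\pi$ permutes the blocks of $\mathbf{P}$; the orbits $\Delta_\alpha$ of this permutation, together with the unions $\bigcup \Delta_\alpha$ (each a union of c-cycles), recover a partial partition $\mathbf{G} = \{G_\alpha\}\in\mathscr{G}$ of the c-cycles, while the length $d_\alpha$ of $\Delta_\alpha$ records the modulus of the corresponding frieze. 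This assigns to each $\pi$-fixed $\mathbf{P}$ a well-defined datum $(\mathbf{G},\{d_\alpha\})$, and the converse half of Lemma~\ref{char} guarantees that every fixed $\mathbf{P}$ arises in this way.

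First I would prove (i). For a fixed datum $(\mathbf{G},\{d_\alpha\})$ the partitions $\mathbf{P}$ producing it are exactly the disjoint unions of one frieze per block $G_\alpha$ with modulus $d_\alpha$, and by a blockwise application of Lemma~\ref{nchoice} there are $\prod_\alpha d_\alpha^{t_\alpha-1}$ of them, where $t_\alpha = |G_\alpha|$. Since the datum is recoverable from $\mathbf{P}$ the fibres are disjoint, and every fixed $\mathbf{P}$ lies in some fibre, so summing the fibre sizes $\prod_\alpha d_\alpha^{t_\alpha-1}$ over all admissible data $(\mathbf{G},\{d_\alpha\})$ yields the count in (i), the choice of moduli being absorbed into the indexing of the sum.

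For (ii) and (iii) I would specialise to the semirigid regime. By the remark following Lemma~\ref{simple}, $\pi$ fixes every block of a frieze exactly when its modulus is $1$; hence $\pi$ acts semirigidly on $\mathbf{P}$ if and only if $d_\alpha = 1$ for all $\alpha$. In that case each frieze degenerates to the single block $\bigcup G_\alpha$ and the fibre size $\prod_\alpha d_\alpha^{t_\alpha-1} = 1$, so the partial partitions semirigidly fixed by $\pi$ are in bijection with $\mathscr{G}$, giving (ii). For (iii) observe that if $\mathbf{P}$ is semirigid in the global sense (over all of $\bS_r$) and satisfies $\mathbf{P}\pi = \mathbf{P}$, then the definition of semirigidity forces $\pi$ to fix every block of $\mathbf{P}$; thus the semirigid $\pi$-fixed partitions form a subset of those semirigidly fixed by $\pi$, and the bound $|\mathscr{G}|$ follows from (ii).

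Finally (iv) refines (iii) by rank. Under the bijection of (ii) a semirigidly fixed $\mathbf{P}$ has block set $\{\bigcup G_\alpha : \alpha\in\mathscr{A}\}$, so its rank equals the number $|\mathscr{A}|$ of blocks of $\mathbf{G}$; consequently a semirigid $\pi$-fixed partition of rank $k$ injects into the members of $\mathscr{G}$ with exactly $k$ blocks, which is the claimed bound. I expect the main obstacle to be conceptual rather than computational: one must see clearly that global semirigidity of $\mathbf{P}$ upgrades the mere hypothesis $\mathbf{P}\pi=\mathbf{P}$ to the stronger statement that $\pi$ fixes every block, since this is exactly what collapses the exact count of (i) onto the $d_\alpha=1$ stratum and lets the rank be identified with the block-count $|\mathscr{A}|$. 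The subsidiary care is in (i), verifying that the datum $(\mathbf{G},\{d_\alpha\})$ is genuinely recoverable from $\mathbf{P}$ so that the fibres partition the set of fixed partitions without overlap.
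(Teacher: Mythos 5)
Your proposal is correct and follows essentially the same route as the paper: both arguments invoke Lemma~\ref{char} to put every $\pi$-fixed partial partition in the frieze-union canonical form, count each fibre via Lemma~\ref{nchoice} as $\prod_\alpha d_\alpha^{t_\alpha-1}$, use the remark after Lemma~\ref{simple} to collapse the semirigidly-fixed case onto the $d_\alpha=1$ stratum, and then obtain (iii) and (iv) by observing that globally semirigid partitions are among those semirigidly fixed by $\pi$, with rank equal to $|\mathscr{A}|$. Your added care about recoverability of the datum $(\mathbf{G},\{d_\alpha\})$ and the disjointness of fibres is implicit in the paper's terser proof but is the same argument.
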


\begin{proof}
\begin{enumerate}[(i)]
\item To choose a partial partition $\mathbf{P}$, 
one must choose a partial partition $\mathbf{G}$ of the c-cycles of $\pi$ as in Lemma \ref{char}, having blocks and corresponding integers $d$  indexed by $\alpha\in\mathscr{A}$.  
By Lemma \ref{nchoice}, the number of friezes on block $\alpha$ is 
$d_{\alpha}^{t_{\alpha}-1}$,
and choices for these may be made independently.  Hence the number of such partitions for $\mathbf{G}$ is 
$\prod_{\alpha}d_{\alpha}^{t_{\alpha}-1}$.
Summing over the choices of $\mathbf{G}$ gives the result.
\item By the remark after Lemma \ref{simple}, if $\mathbf{P}$ is semirigidly fixed by $\pi$, we must have $d_{\alpha} =1$ for all $\alpha$, so $\prod_{\alpha}d_{\alpha}^{t_{\alpha}-1} = 1$ and (ii) follows.  In this case we can also see that each block $G_{\alpha}$ is a union of c-cycles.
\item Each semirigid partition is semirigidly fixed by $\pi$ (but not necessarily conversely), so the expression in (ii) is an upper bound.
\item By (ii), $k=\sum_{\alpha\in \mathscr{A}}1 = |\mathscr{A}|$, and the result follows. 
\end{enumerate}
\end{proof}

\begin{exa}{3}\label{ex3}
In Example \ref{ex1}, with $k = 1$, necessarily $|\mathscr{A}| = 1$ and $d = 1$.  Then partitions fixed by  $\pi$ are semirigidly fixed, and equality holds in Proposition \ref{the list}(iii).  
However with $r=k=2$ (so $n=5$), we have $d=2$ for one pair of c-cycles of the transposition $\pi$, namely $(xx,yy)$ and $(xy,yx)$, where we write $xy$ for the pair $(x,y)$ in the cause of brevity.  Then there are two possible inequivalent choices for $E$: $\{xx,xy\}$ and $\{xx,yx\}$.  These give respectively partitions 
$\mathbf{P}_{1} = \{\{xx,xy\},\{yy,yx\}\}$ and 
$\mathbf{P}_{2} = \{\{xx,yx\},\{yy,xy\}\}$.  The resulting $S$ are fixed by $\pi$, but not semirigidly, and are not included in the count of Proposition \ref{the list}(ii).   
\end{exa}

Doubtless the calculation of Proposition \ref{the list} is precisely that of the power-groups method used in Distler and Mitchell \cite{DM}.


\section{Bounding counts up to isomorphism}

To use these facts for enumeration of isomorphism classes, we may again apply the orbit-counting formula.  This yields the following proposition, where we continue to use the notation of Proposition \ref{the list}.
\begin{pro} \label{exact}
The number of orbits of the action of $\bS_r$ on $\PP_k(X\times X)$ is 
$$
\sum_{\pi\in \bS_{r}} \frac{1}{r!}\sum_{\{G_{\alpha}\} \in \mathscr{G}}\prod_{\alpha}d_{\alpha}^{t_{\alpha} - 1}
$$
subject to 
$\sum d_{\alpha} = k$.
\end{pro}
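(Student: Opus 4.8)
The plan is to invoke the orbit-counting (Burnside--Frobenius) formula for the action of $\bS_{r}$ on $\PP_k(X\times X)$ and then substitute the fixed-point counts already established in Proposition~\ref{the list}. First I would record, via Corollary~\ref{auts}(iv), that the number we seek—the number of isomorphism classes of rank $r$ and order $n=r+k+1$—is exactly the number of orbits of this action, so it suffices to count orbits. Since $\bS_{r}$ is a finite group acting on the finite set $\PP_k(X\times X)$, the orbit-counting formula applies and gives
$$
\#\{\text{orbits}\} = \frac{1}{|\bS_{r}|}\sum_{\pi\in\bS_{r}}|\operatorname{Fix}(\pi)| = \frac{1}{r!}\sum_{\pi\in\bS_{r}}|\operatorname{Fix}(\pi)|,
$$
where $\operatorname{Fix}(\pi)$ denotes the set of partial partitions lying in $\PP_k(X\times X)$ that are fixed by $\pi$ under the p-action.

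The second step is to evaluate $|\operatorname{Fix}(\pi)|$. Proposition~\ref{the list}(i) already counts the partial partitions of $X\times X$ fixed by $\pi$, organised according to the partial partition $\mathbf{G}=\{G_{\alpha}\}$ of the c-cycles of $\pi$ together with the associated moduli $d_{\alpha}$, the contribution of each such datum being $\prod_{\alpha}d_{\alpha}^{t_{\alpha}-1}$. The single adjustment required is the restriction to rank exactly $k$: recall from the discussion preceding Lemma~\ref{char} that the frieze $\mathbf{F}_{\alpha}$ built on $G_{\alpha}$ contributes $d_{\alpha}$ blocks, so the total rank of $\mathbf{P}(\{E_{\alpha}\},\{d_{\alpha}\})$ is $\sum_{\alpha}d_{\alpha}$. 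Hence $\mathbf{P}\in\PP_k(X\times X)$ if and only if $\sum_{\alpha}d_{\alpha}=k$, whence
$$
|\operatorname{Fix}(\pi)| = \sum_{\{G_{\alpha}\}\in\mathscr{G},\ \sum_{\alpha}d_{\alpha}=k}\prod_{\alpha}d_{\alpha}^{t_{\alpha}-1}.
$$
Substituting this into the orbit-counting formula, and writing the constant $1/r!$ under the outer sum over $\pi$, produces exactly the claimed expression with its side condition $\sum_{\alpha}d_{\alpha}=k$.

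The routine but essential point to verify is that the enumeration in Proposition~\ref{the list}(i) genuinely counts \emph{distinct} fixed partial partitions with no double counting, so that imposing $\sum_{\alpha}d_{\alpha}=k$ really isolates $\operatorname{Fix}(\pi)\cap\PP_k(X\times X)$. This rests on Lemma~\ref{char}, which shows that every $\pi$-fixed partition arises as some $\mathbf{P}(\{E_{\alpha}\},\{d_{\alpha}\})$ and that the data $(\mathbf{G},\{d_{\alpha}\})$ are recovered canonically from $\mathbf{P}$ (through the cycles $\Delta_{\alpha}$ of $\pi$ on the blocks and the minimal return exponents $d_{\alpha}$), together with Lemma~\ref{nchoice}, which guarantees that distinct frieze choices on a given block yield distinct partitions. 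I expect the only real obstacle to be bookkeeping: confirming that the rank-$k$ slice of the all-rank count of Proposition~\ref{the list}(i) is obtained simply by the linear constraint $\sum_{\alpha}d_{\alpha}=k$, with nothing lost or double-counted as $\pi$ ranges over $\bS_{r}$. Once that is secured, the proposition follows at once from Burnside--Frobenius.
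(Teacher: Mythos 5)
Your proof is correct and follows exactly the route the paper intends: the paper states Proposition~\ref{exact} as an immediate consequence of the Burnside--Frobenius orbit-counting formula combined with the fixed-point count of Proposition~\ref{the list}(i), restricted to rank $k$ via the constraint $\sum_\alpha d_\alpha = k$. Your write-up simply makes explicit the bookkeeping (uniqueness via Lemma~\ref{char} and Lemma~\ref{nchoice}) that the paper leaves implicit.
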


There are small examples of this calculation in Examples \ref{ex5} and \ref{ex6} below.  We remark that the dominant term in such a sum comes by choosing $d_{\alpha}=1$ for each $\alpha$, since this captures all the rigid cases, and $\lim_{n\rightarrow \infty}\bar{r}_{n} / \bar{t}_{n} = 1$ by Corollary \ref{corPAG}. 

To tease out the expression in Proposition \ref{exact} we must describe the c-cycles of a permutation $\pi$.  
 The following basic facts about c-cycles are straightforward to prove. 
\begin{lem}\label{key}
Let us assume that the permutation $\pi$ has cycles
$C_1,\dots,C_s$ for its natural action on $X$ ($|X|=r$), and let $|C_i|=\lambda_i$. 
\begin{enumerate}[(i)]
\item Each cycle of the cartesian action of $\pi$ on $X\times X$ is contained in $C_i\times C_j$ for some $1\leq i,j\leq s$.
\item In fact, each block $C_i\times C_j$ consists precisely of $(\lambda_i,\lambda_j)$ cycles (the g.c.d), 
each of length 
$[\lambda_i,\lambda_j]$ (the l.c.m).  
\end{enumerate}
\end{lem}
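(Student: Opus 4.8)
The plan is to treat the two parts separately, with part (i) essentially immediate and part (ii) reducing to a standard orbit-length computation for the cyclic group $\langle\pi\rangle$ acting on a product of two of its natural cycles.

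For (i) I would first record that each natural cycle is $\pi$-invariant, $C_i\pi = C_i$, by the very definition of a cycle. Consequently, for any $(x,x')\in C_i\times C_j$ the c-action gives $(x,x')\pi = (x\pi,x'\pi)\in C_i\times C_j$, so the entire c-cycle through $(x,x')$ remains inside $C_i\times C_j$. Since the blocks $C_i\times C_j$ partition $X\times X$ as $(i,j)$ ranges over $\{1,\dots,s\}^2$, every c-cycle lies in exactly one such block, which is (i).

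For (ii) I would fix a block $C_i\times C_j$ and compute the length of the c-cycle through an arbitrary point $(x,x')$. This length is the least positive integer $\ell$ with $(x,x')\pi^\ell = (x,x')$, that is, with $x\pi^\ell = x$ and $x'\pi^\ell = x'$ holding simultaneously. Since $x$ lies on a natural cycle of length $\lambda_i$, its period is exactly $\lambda_i$, so $x\pi^\ell = x$ iff $\lambda_i\mid\ell$; likewise $x'\pi^\ell = x'$ iff $\lambda_j\mid\ell$. The least such $\ell$ is therefore $[\lambda_i,\lambda_j]$. Hence every c-cycle contained in $C_i\times C_j$ has length exactly $[\lambda_i,\lambda_j]$, and the number of c-cycles in the block is the total size divided by the common length,
\[
\frac{\lambda_i\lambda_j}{[\lambda_i,\lambda_j]} = (\lambda_i,\lambda_j),
\]
using $(\lambda_i,\lambda_j)\,[\lambda_i,\lambda_j] = \lambda_i\lambda_j$.

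The computation is routine; the only point requiring care is the uniformity claim, namely that all c-cycles within a single block $C_i\times C_j$ share the length $[\lambda_i,\lambda_j]$. This holds because the period of $(x,x')$ depends only on the lengths $\lambda_i,\lambda_j$ of the ambient natural cycles and not on the chosen representative, so no orbit in the block can be shorter or longer. I would state this explicitly rather than leave it implicit, since it is exactly what licenses passing from the single orbit-length calculation to the count of orbits in the block.
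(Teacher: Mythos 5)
Your proof is correct, and it supplies exactly the routine argument that the paper omits: the authors state Lemma~\ref{key} as ``straightforward to prove'' and give no proof, and your two steps (invariance of each block $C_i\times C_j$ under the c-action, then the period computation $x\pi^\ell=x \iff \lambda_i\mid\ell$ giving orbit length $[\lambda_i,\lambda_j]$ and hence $\lambda_i\lambda_j/[\lambda_i,\lambda_j]=(\lambda_i,\lambda_j)$ orbits per block) are the intended ones. Your explicit remark that the orbit length is uniform across the block is a worthwhile addition, since it is precisely what justifies dividing the block size by a single common length.
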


Let us first note that the products and summands in Proposition \ref{exact} are dependent only on the cycle type, i.e., conjugacy class of the particular $\pi\in \bS_{r}$, and thus on the integer partition associated with $\pi$, 
$$ 
\lambda = (\lambda_1^{\mu_1},\dots,\lambda_t^{\mu_t}),
$$
$\mu_{i}$ being the multiplicity of $\lambda_{i}$-cycles.  We write $\lambda \vdash r$ to indicate that $\lambda$ is an integer partition of $r$.  In concrete cases the notation $\lambda_1^{\mu_1}\dots\lambda_t^{\mu_t}$ for $\lambda$ is convenient.  We will also use the fact that the corresponding conjugacy class contains 
$$
\frac{r!}{\lambda_1^{\mu_1}\dots\lambda_t^{\mu_t}\mu_1!\dots \mu_t!}
$$
permutations.  Let us introduce the symbol  
$\beta_{d} = \beta_{d}(\lambda)$ for the number of c-cycles with length divisible by $d$, 
$$
\beta_{d}(\lambda) = \sum_{d\vert [\lambda_i,\lambda_j]} (\lambda_i,\lambda_j).
$$
(The  sum is taken over ordered pairs.)  In particular, write simply $\beta(\lambda)$ for 
$$
\beta_{1}(\lambda) = \sum_{1\leq i,j\leq s} (\lambda_i,\lambda_j),
$$
which is the total number of c-cycles of any $\pi$ with cycle type $\lambda$, and $\beta(\pi)$ for $\beta(\lambda)$ when $\pi$ has cycle type $\lambda$.  We note that for $r$ fixed, $\beta(\lambda)$ attains its maximum of $r^{2}$ at $\lambda = 1^{r}$.

Calculating with Proposition \ref{exact} looks difficult, so we proceed by obtaining approximate formul{\ae}  for the numbers of semirigid isomorphism classes (the by-far dominant term). 

\begin{lem}\label{tech}
The number of semirigid partial partitions of $X\times X$ of rank $k$  fixed by $\pi\in \bS_{r}$ is bounded above by 
$$
F(\pi) = \stirling{\beta(\pi)+1}{k+1}.
$$
\end{lem}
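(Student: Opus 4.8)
The plan is to chain together Proposition \ref{the list}(iv), the combinatorial meaning of $\mathscr{G}$ in the semirigid case, and Lemma \ref{nppart}; no new machinery is required. The entire argument reduces to counting the members of $\mathscr{G}$ that have exactly $k$ blocks and recognising that count as a Stirling number of partial partitions.

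First I would invoke Proposition \ref{the list}(iv), which already tells us that the number of semirigid partial partitions of $X\times X$ of rank $k$ fixed by $\pi$ is at most the number of $\mathbf{G} = \{G_{\alpha}\}\in\mathscr{G}$ for which $|\mathscr{A}| = k$. This step does the essential work of converting the problem from counting partial partitions of $X\times X$ to counting partial partitions of the (much smaller) set of c-cycles of $\pi$. The relevant structural input is the observation recorded in the proof of Proposition \ref{the list}(ii): in the semirigid case every modulus satisfies $d_{\alpha}=1$, so each block $G_{\alpha}$ is a union of whole c-cycles, and counting the blocks of $\mathbf{G}$ is the same as counting the blocks of the induced partial partition of the c-cycle set.

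Next I would observe that, by its very definition, $\mathscr{G}$ is the set of all partial partitions of the set of c-cycles of $\pi$, and that this set has cardinality $\beta(\pi)$ by the definition of $\beta$. Consequently the members of $\mathscr{G}$ with exactly $k$ blocks are precisely the partial partitions of a $\beta(\pi)$-element set into $k$ parts. Applying Lemma \ref{nppart} with the ground set of size $\beta(\pi)$ then yields that the number of such partial partitions equals $\stirling{\beta(\pi)+1}{k+1}$, which is the claimed bound $F(\pi)$.

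There is no genuine obstacle here; this is a clean assembly of results already in hand. The only points that need care are, first, keeping the two kinds of partition straight (partitions of $X\times X$ versus partitions of the c-cycle set), and second, remembering that the conclusion is an inequality rather than an equality: this is inherited directly from Proposition \ref{the list}(iv), since a partition semirigidly fixed by $\pi$ need not itself be semirigid, so the Stirling number overcounts. The bound is therefore exactly as advertised, with equality failing in general.
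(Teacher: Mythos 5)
Your proof is correct and follows essentially the same route as the paper's: Proposition \ref{the list}(iv) to pass from partial partitions of $X\times X$ to members of $\mathscr{G}$ with $k$ blocks, Lemma \ref{key} to identify the number of c-cycles as $\beta(\pi)$, and Lemma \ref{nppart} applied to that $\beta(\pi)$-element ground set to get $\stirling{\beta(\pi)+1}{k+1}$. In fact your write-up is more explicit than the paper's one-line proof, and you correctly flag why the statement is only an inequality.
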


\begin{proof}
By Corollary \ref{the list}(iv), we may use the cardinality of $\PP_k(X\times X)$ and this is given by Lemmas \ref{nppart} and \ref{key}.  
\end{proof} 
Applying the orbit-counting formula and  taking into account that $k +1 = n-r$, we immediately obtain 

\begin{pro} \label{upbd}
The number of orbits of the action of $\bS_r$ on the semirigid members of $\PP_k(X\times X)$ is bounded above by
$$\frac{1}{r!} \sum_{\pi\in \bS_{r}}\stirling{\beta({\pi})+1}{n-r} .$$
\end{pro}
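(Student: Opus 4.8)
The plan is to recognise that the quantity to be bounded is an \emph{exact} orbit count to which the Burnside--Frobenius formula applies, and then to feed in the fixed-point bound already established in Lemma~\ref{tech}. By Lemma~\ref{subacts} the semirigid members of $\PP_k(X\times X)$ are closed under the p-action of $\bS_r$, so they form a subact and it makes sense to speak of the orbits of $\bS_r$ on them. First I would invoke the orbit-counting formula quoted at the start of Section~5, which expresses this number of orbits exactly as the average, over $\pi\in\bS_r$, of the number of semirigid rank-$k$ partitions fixed by $\pi$, namely
$$
\frac{1}{r!}\sum_{\pi\in\bS_r}\bigl|\{\mathbf{P}\colon \mathbf{P}\text{ semirigid of rank }k,\ \mathbf{P}\pi=\mathbf{P}\}\bigr|.
$$

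Next I would bound each summand using Lemma~\ref{tech}, which asserts that the number of semirigid partial partitions of rank $k$ fixed by a given $\pi$ is at most $F(\pi)=\stirling{\beta(\pi)+1}{k+1}$. Since every term of the exact orbit count is thereby replaced by a quantity no smaller, the average of the fixed-point sizes is bounded above by the average of the $F(\pi)$, giving
$$
\frac{1}{r!}\sum_{\pi\in\bS_r}\stirling{\beta(\pi)+1}{k+1}.
$$
Finally I would substitute the standing relation $k+1=n-r$ (from $r+k=n-1$) into the upper index, which converts this into exactly the claimed formula.

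There is essentially no obstacle remaining at this point: the combinatorial heavy lifting has already been carried out upstream, through the characterisation of fixed partitions as disjoint unions of friezes (Lemma~\ref{char}, Proposition~\ref{the list}) and the c-cycle count $\beta(\pi)$ of Lemma~\ref{key}, all packaged into Lemma~\ref{tech}. The only two points requiring mild care are (a)~confirming that we genuinely have an action on a well-defined set so that the orbit-counting formula is legitimately applicable, which is exactly what Lemma~\ref{subacts} supplies, and (b)~tracking the direction of the inequality: because the orbit count is an equality and each fixed-point cardinality is being overestimated by $F(\pi)$, the resulting expression is an \emph{upper} bound, as asserted.
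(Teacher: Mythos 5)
Your proof is correct and follows exactly the paper's route: the paper likewise obtains Proposition~\ref{upbd} immediately by applying the orbit-counting formula to the subact of semirigid members (well-defined by Lemma~\ref{subacts}), bounding each fixed-point count by $F(\pi)=\stirling{\beta(\pi)+1}{k+1}$ from Lemma~\ref{tech}, and substituting $k+1=n-r$. Nothing is missing; your two points of ``mild care'' are exactly the implicit steps the paper glosses over.
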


Note  the bound given here need not be an integer.    
For instance, the identity semirigidly fixes each $\mathbf{P}\in \PP_{k}(X\times X)$, and so  if $\mathbf{P}$ is not semirigid it contributes an excess of ${\stirling{r^{2}+1}{n-r}}/{r!}$ to the relevant orbit count---see Example \ref{ex4} below.  Nonetheless, the expression in Proposition \ref{upbd} is itself bounded above by the number of orbits of $\bS_r$ acting on \emph{all} members of $\PP_k(X\times X)$.  We may (slightly) refine a rational estimate of an integer quantity by replacing it by its integer part (floor function value) or its ceiling value, depending on whether we seek an upper bound or a lower bound.  In computations below, we carry fractions in intermediate steps, and round down at the end. 

Now a bound for the  number of non-isomorphic semirigid 3-nilpotent semigroups of order $n$ is obtained by summation of the terms
from the previous proposition with $r$ ranging over all possible values.  However, we would rather like to re-arrange this sum by grouping together the (equal) terms corresponding to
permutations with the same cycle type on $X$, that is, those belonging to the same conjugacy class. 
Thereby we obtain the following result.

\begin{thm} \label{count}
The number of non-isomorphic semirigid 3-nilpotent semigroups of cardinality $n$ is bounded above by
$$
\sum_{r=1}^{n-2} \sum_{\lambda\vdash r} \frac{\stirling{\beta(\lambda)+1}{n-r}}{\lambda_1^{\mu_1}\dots\lambda_t^{\mu_t}\mu_1!\dots \mu_t!} ;
$$
and this quantity is also a lower bound for the number of all non-isomorphic 3-nilpotent semigroups of cardinality $n$.
\end{thm}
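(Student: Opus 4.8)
The plan is to establish the two asserted inequalities separately, both by applying the orbit-counting (Burnside) formula to the p-action of $\bS_r$ on $\PP_k(X\times X)$ and then resumming over conjugacy classes, using throughout Corollary \ref{auts}(iv) to identify isomorphism classes of rank-$r$ members of $\mathscr{N}_3$ of order $n$ with orbits of this action, where $k+1=n-r$.

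For the upper bound I would start directly from Proposition \ref{upbd}, which already bounds the number of orbits on the semirigid members of $\PP_k(X\times X)$ by $\frac{1}{r!}\sum_{\pi\in\bS_r}\stirling{\beta(\pi)+1}{n-r}$. The only remaining work is bookkeeping: since $\beta(\pi)=\beta(\lambda)$ depends only on the cycle type $\lambda$ of $\pi$, I would group the summands by conjugacy class, using the standard count $r!/(\lambda_1^{\mu_1}\cdots\lambda_t^{\mu_t}\mu_1!\cdots\mu_t!)$ of permutations of type $\lambda$. The factors of $r!$ cancel, giving $\sum_{\lambda\vdash r}\stirling{\beta(\lambda)+1}{n-r}/(\lambda_1^{\mu_1}\cdots\lambda_t^{\mu_t}\mu_1!\cdots\mu_t!)$ as the per-rank bound; summing over $r$ from $1$ to $n-2$ (terms outside the feasible range contribute a vanishing Stirling number) then yields the stated upper bound for the total number of non-isomorphic semirigid 3-nilpotent semigroups.

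For the lower-bound claim, the key observation is that the same quantity $F(\pi)=\stirling{\beta(\pi)+1}{n-r}$ is not merely a bound but an exact count. By Lemma \ref{nppart} it equals the number of partial partitions of the set of $\beta(\pi)$ c-cycles of $\pi$ into $k$ blocks, which by Proposition \ref{the list}(ii), restricted to rank $k$, is precisely the number of rank-$k$ partitions \emph{semirigidly} fixed by $\pi$ (each such block being a union of c-cycles). Since every partition semirigidly fixed by $\pi$ is in particular fixed by $\pi$, we have $F(\pi)\leq|\mathrm{Fix}_k(\pi)|$ for each $\pi$, where $\mathrm{Fix}_k(\pi)$ denotes the set of all rank-$k$ partitions fixed by $\pi$. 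Averaging over $\bS_r$ and invoking the orbit-counting formula gives $\frac{1}{r!}\sum_\pi F(\pi)\leq\frac{1}{r!}\sum_\pi|\mathrm{Fix}_k(\pi)|$, and the right-hand side is exactly the number of orbits of $\bS_r$ on all of $\PP_k(X\times X)$, that is, the number of isomorphism classes of rank-$r$ 3-nilpotent semigroups of order $n$. Summing over $r$ then exhibits the formula as a lower bound for $\bar t_n$.

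The main subtlety — the point I would take care to flag — is that a single rational expression must simultaneously bound one integer quantity from above and a larger integer quantity from below, so the two inequalities run in opposite directions and rest on two different readings of $F(\pi)$: as an upper bound for the semirigid partitions fixed by $\pi$ (via Lemma \ref{tech}, since semirigid implies semirigidly fixed) for the first claim, and as the exact count of partitions semirigidly fixed by $\pi$ — hence a lower bound on all fixed partitions — for the second. Everything else is the routine resummation by conjugacy class together with careful tracking of the substitution $k+1=n-r$; there is no genuinely hard analytic step.
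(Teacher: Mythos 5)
Your proposal is correct and follows essentially the same route as the paper: the upper bound is Proposition \ref{upbd} resummed over conjugacy classes (using the standard count of permutations of a given cycle type, exactly as you do), and the lower bound rests on the observation---stated in the paper as a remark following Proposition \ref{upbd}---that the expression there is itself bounded above by the number of orbits of $\bS_r$ on \emph{all} of $\PP_k(X\times X)$. Your explicit discussion of the two readings of $F(\pi)$ (upper bound for semirigid partitions fixed by $\pi$, exact count of partitions semirigidly fixed by $\pi$) simply spells out the justification the paper leaves implicit.
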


\begin{table}[hbt]
\begin{center}
\footnotesize{
 \begin{tabular}{l|r|r|r|}
  $n$        & $a_n =$ \# s'rigid up to iso & $b_n =$ Theorem \ref{count}
  u.b. & $c_n = $\# 3-nilp. up to iso 
  \\ \hline
  3           & 1                          & 1
              & 1   \\%
  4           & 9                          & 9
              & 9    \\%
  5           & 114                        & 116
              & 118  \\%
  6           & 4~629                      & 4~650
              & 4~671  \\%
  7           & 1~198~759                  & 1~199~370
              & 1~199~989  \\%
  8           & -                          & 3~661~477~300
              & 3~661~522~792  \\%
  9           & -                          & 105 931 863 102 354
              & 105 931 872 028 455  \\%
  10          & -                          &  24 834 563 575 435 688 559 
	            & 24 834 563 582 168 716 305 \\
 \end{tabular}
 }
 \end{center}
\vspace{2mm}
\caption{The number of \emph{isomorphism classes} of 3-nilpotent semigroups: the actual number of semirigid ones, its upper bound from Theorem \ref{count},
  and the actual total from Smallsemi \cite{GAP}.}
\end{table}

It is convenient to use the notation $w(\lambda) = {\lambda_1^{\mu_1}\dots\lambda_t^{\mu_t}\mu_1!\dots \mu_t!}$ and refer to it as a \emph{weight} in sums such as the above.  

\begin{exa}{1 (redux)} \label{ex1-red}
Let $n = 4$. Then the outer sum in Theorem \ref{count} has just one term, with $r = 2$. The partitions $\lambda \vdash 2$ are $1+1 = 1^{2}$ and $2 = 2^{1}$, and give the corresponding values $\beta(1^{2}) = (1,1)+(1,1)+(1,1)+(1,1) = 4$ 
and $\beta(2^{1}) = 2\wedge 2 = 2$. Thus Theorem \ref{count} evaluates to ${\stirling{5}{2}} /{2} + {\stirling{3}{2}}/{2} =9$, as it should (\cite{DM}, Table 3 and references), since all these semigroups are semirigid.    
\end{exa}

\begin{exa}{3 (redux)} \label{ex3-red}
Let $n = 5$. There are contributions only from $r=2$ (when $k=2$) and $r = 3$ (when $k=1$). For $r=2$ the partitions $\lambda \vdash 2$ are $1+1 = 1^{2}$ and $2 = 2^{1}$ as before, and the inner sum of Theorem \ref{count} for $d=1$ evaluates to ${\stirling{5}{3}} /{2} + {\stirling{3}{3}}/{2} =\bf{13}$. 
For $d = 2$, we previously found the contribution $\bf{2}$, and $d> 2 = k$ does not contribute.  
For $r=3$, $d=1$ since $k=1$, and we have  $\beta(1^{3}) = 9$, $\beta(1^{1}2^{1}) = 5$ and $\beta(3^{1}) = 3$, with partial sum 
${\stirling{10}{2}}/{3!} + {\stirling{6}{2}}/{2} +{\stirling{4}{2}}/{3} = {511}/{6}+{31}/{2}+{7}/{2} =\bf{103}$.  
So the total number of isomorphism classes is $118$. 
\end{exa}

\begin{exa}{4} \label{ex4}
Let $n = 7$; with $|X| = r = 2$, so $k = 4 = r^{2}$, we have the free 3-nilpotent semigroup $F_{2}$.  Its partition $\mathbf{P}$ is the identity partition and it has $\bS_{2}$ for its automorphism group.  Though it is not semirigid, the identity map nonetheless acts semirigidly on $\mathbf{P}$ and it contributes a quantity $\frac{1}{2}$ to the bounding sum of Theorem \ref{count}, which is therefore non-integral. 
\end{exa}

\begin{exa}{5} \label{ex5} 
For $n = 6$, the formula of Theorem \ref{count} evaluates to $\bf{4650}$, and this includes all cases where $d=1$.  Cases with $d=2$ come from $r=2, \lambda = 2^{1}, \beta_{2} = 2$ and $r=3, \lambda = 1^{1}2^{1}, \beta_{2}=4$.  No other cases arise, since for $\lambda = 3^{1}$ a choice with $d=3$ contradicts $k=2$; and for $r=4$ we have $k=1$ and no frieze is possible.     
For $\lambda = 2^{1}, k = 3$, so of the two c-cycles, one must have $d=2$ and the other $d=1$.  This comes about in just 2 ways, and $w(2^{1})={2}$. This contributes $\bf{1}$ to the count of isomorphism classes.  For $\lambda = 1^{1}2^{1}, k = 2, w(1^{1}2^{1})={2}$, just one block of $t$ c-cycles makes a frieze with $d=2$ parts.  There are $\binom{\beta_{2}}{t}$ such blocks and each may be turned into a frieze in $2^{t-1}$ ways (Proposition \ref{the list}).  This gives  $\sum_{t=1}^{4}\binom{4}{t}2^{t-1}=\frac{1}{2}((2+1)^{4})-1 = 40$ partitions and an additional term of $\bf{20}$.  Thus we have a total of $4650+1+20 = \bf{4671}$ isomorphism classes, in agreement with \cite{DM}.
\end{exa}

The dominant term in the inner sum in Theorem \ref{count} is that for $\lambda = 1^{r}$.  Its value is ${\stirling{r^{2}+1}{n-r}}/{r!}$, and so the double sum is at least $\sum_{r=1}^{n-r}{\stirling{r^{2}+1}{n-r}}/{r!} = t_{n}/n!$, the quantity encountered in Corollary \ref{tn/n!}.

We may continue to compute analogous `correction' terms, associated with integer partitions.    
Thus, one way to proceed might be to list the correction terms according to the cycle type.  Our zeroth guess may be that cycle types $1^{i}2^{j}$ are major contributors, as they are least constrained.  So let us begin with a small extension of Example \ref{ex5}.  Let $\lambda = 1^{\mu_{1}}2^{\mu_{2}}$, so 
$r = \mu_{1} + 2\mu_{2}$, $\beta_{1} = (\mu_{1}+\mu_{2})^{2} + \mu_{2}^{2}$, $\beta_{1} - \beta_{2} = \mu_{1}^{2}$, and $\beta_{2} = 2\mu_{2}(\mu_{1}+\mu_{2})$; also $w(\lambda)=2^{\mu_{2}}\mu_{1}!\mu_{2}!$.  There is a partial correction term of rank $k$, made from any possible frieze of modulus $d=2$ on the $\beta_{2}$ c-cycles and, independently chosen, any partial partition of the remainder having rank $k - 2$: 
$$
\frac{\sum_{t=1}^{\beta_{2}}\binom{\beta_{2}}{t}2^{t-1}
\stirling{\beta_{1}-t+1}{k-2+1}}{2^{\mu_{2}}\mu_{1}!\mu_{2}!} .
$$
This expression simplifies in a couple of cases: if $k=2$, the Stirling number is $1$ and the summation term is
$
\sum_{t=1}^{\beta_{2}}\binom{\beta_{2}}{t}2^{t-1}
=\frac{1}{2}(3^{\beta_{2}}-1)
$. If $k=3$, we may use the identity 
$\stirling{\beta_{1}-t+1}{2} = 2^{\beta_{1}-t}-1$ to express the sum as
\begin{align*}
\sum_{t=1}^{\beta_{2}}\binom{\beta_{2}}{t}2^{t-1}(2^{\beta_{1}-t}-1) &=
\frac{1}{2}\sum_{t=1}^{\beta_{2}}\binom{\beta_{2}}{t}2^{\beta_{1}} - \frac{1}{2}\sum_{t=1}^{\beta_{2}}\binom{\beta_{2}}{t}2^{t}\\
&= \frac{1}{2}(2^{\beta_{1}+\beta_{2}} - 2^{\beta_{1}} - 3^{\beta_{2}} +1).
\end{align*}
 
\begin{exa}{6} \label{ex6}
We can apply these to find the remaining `non-semirigid' (i.e., $d>1$) terms for $n=7$.  We already had a term for $d=1$ (by GAP calculation, and verified by hand), $\bf{1199370.5}$; it (over-) estimates the semirigid isomorphism classes---including the fractional part noted in Example \ref{ex4}.
This already includes the case $k = 1$, since $\sum d_{\alpha} = k$.  
If $k = 2$, $r = 4$, and the relevant integer partitions are $\lambda = 1^{2}2^{1}, 2^{2}$, and $4^{1}$.  We take them in turn.

For $\lambda = 1^{2}2^{1}$, we have $\beta_{1} = 10$ and $\beta_{2} = 6$, so 
$\frac{1}{2}(3^{\beta_{2}}-1) = \frac{1}{2}(3^{6}-1) =\frac{1}{2}(728)$, while $w = w(\lambda) = 4$. This accounts for $\mathbf{91}$ isomorphism classes.
For $\lambda = 2^{2}$, $w = 8$, $\beta_{1} = \beta_{2} = 8$, and the partition count evaluates to $\frac{1}{2}(3^{8} - 1) = 3280$, giving an additional term of $3280/8 = \mathbf{410}$ classes.   For $\lambda = 4^{1}$, $\beta_{1} = \beta_{2} = w = 4$, and $\frac{1}{2}(3^{4} - 1) = 40$, giving a further $40/4 = \mathbf{10}$ classes.

Now we go to $k = 3$ and $r=3$; here $\lambda = 1^{1}2^{1}$ or $3^{1}$.  In the first case, we have $w = 2, \beta_{1} = 5$, and $\beta_{2} = 4$.  Our additional count of partitions is (as given above)  
$\frac{1}{2}(2^{9} - 2^{5} - 3^{4} +1) = 200$ and of classes, $\mathbf{100}$.  
The same reasoning as used above applied to $\lambda = 3^{1}$, $\beta_{1} = \beta_{3} = w = 3$, when a single frieze with $d = 3$ is available, gives a partition count of 
$\sum_{t=1}^{\beta_{3}}\binom{\beta_{3}}{t}3^{t-1} = 
3^{-1}((3+1)^{3} - 1) = 21$
and a class count of $21/w = \mathbf{7}$.

Lastly, if $k=4, r=2$, the only friezes feasible are with $\lvert\mathscr{A}\rvert = 2, d_{1} = d_{2} = 2, t_{1}=t_{2}= 1$ and $\lambda =2^{1}$.  Then $w(2^{1}) = 2$ and the number of partitions resulting is $d_{1}^{t_{1}-1}d_{2}^{t_{2}-1} = 2^{0}2^{0}= 1$;  the contribution to the class count is therefore $\mathbf{\frac{1}{2}}$.

In summary, we have found a further $91+410+10 + 100+7+0.5 = \mathbf{618.5}$ isomorphism classes.  Combined with the `semirigid' ones noted above, we have a total of $\bf{1199989}$ classes,  the correct value from \cite{DM}.  
\end{exa}


\section {Commutative classes}

To bound the number of semirigid commutative isomorphism classes, we need to determine when the partitions of Proposition \ref{upbd} are symmetric.  
Consider a certain permutation $\pi$ having cycle type $ \lambda $, as before. 
 Recall (from the proof of part (ii) of Lemma \ref{the list} that a partial  partition $\mathbf{P}$ is semirigidly fixed by $\pi$ if and only if each class is a
 union of c-cycles of $\pi$ acting on $X\times X$.  Moreover, 
 such a $\mathbf{P}$ is symmetric if and only if each class is a union of symmetric cycles and matched pairs of skew cycles. Observe next that the cartesian action has two invariant sets, the diagonal $\{(x,x)\colon x\in X\}$ and its complement; and that each cycle contained in the diagonal is symmetric. Others are identified with the aid of the next lemma. 
\begin{lem} \label{sym}
Let $\Gamma = \{(x,y)\pi^{i}\colon 1\leq i\leq s\}$ be a non-diagonal cycle of length $s$ in the cartesian action. 
Then $\Gamma$ is symmetric if and only if $s$ is even and $\Gamma = \{(x,x\pi^{\frac{s}{2}})\pi^{i}\colon 1\leq i\leq s\}$, for some $x$ in a cycle $C_{j}$ of length $s$ in the defining action of $\bS_{r}$ on $X$.

\end{lem}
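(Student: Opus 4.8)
The plan is to translate symmetry of a c-cycle into a statement about the twist map $\tau$ and then exploit the fact that $\tau$ commutes with the c-action of $\pi$. Writing both out on coordinates, one checks immediately that $(x,y)\pi\tau = (y\pi, x\pi) = (x,y)\tau\pi$, so $\tau$ and $\pi$ commute as maps on $X\times X$. Consequently $\Gamma\tau$ is again a c-cycle (of the same length $s$), and since $\Gamma$ is symmetric precisely when $\Gamma\tau = \Gamma$, symmetry is equivalent to $(x,y)\tau = (y,x)$ lying in $\Gamma$. This reformulation is the crux: everything downstream is an analysis of which power of $\pi$ carries $(x,y)$ to $(y,x)$.

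For the forward direction, suppose $\Gamma$ is symmetric, so $(y,x) = (x,y)\pi^{i}$ for a unique $i$ with $0\le i < s$. Since $\Gamma$ is non-diagonal we have $x\neq y$, hence $(y,x)\neq(x,y)$ and $i\neq 0$; in particular $s\ge 2$. Reading off coordinates gives $x\pi^{i}=y$ and $y\pi^{i}=x$, so $x\pi^{2i}=x$ and $y\pi^{2i}=y$, i.e.\ $(x,y)\pi^{2i}=(x,y)$. As $s$ is the length of $\Gamma$ this forces $s\mid 2i$; but $1\le i\le s-1$ gives $2\le 2i\le 2s-2$, and the only multiple of $s$ in that range is $s$ itself. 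Hence $2i=s$, so $s$ is even and $i=s/2$, yielding $y=x\pi^{s/2}$ and the stated description of $\Gamma$. Finally, $y=x\pi^{s/2}$ shows $y$ lies in the same $\pi$-cycle as $x$; writing $\lambda$ for the length of that cycle, Lemma \ref{key}(ii) gives $s=[\lambda,\lambda]=\lambda$, so $x$ belongs to a cycle $C_{j}$ of length exactly $s$.

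For the converse, take $s$ even and $y=x\pi^{s/2}$ with $x$ in a cycle $C_{j}$ of length $s$. Then $x\pi^{s}=x$, so $(x,y)\pi^{s/2}=(x\pi^{s/2},x\pi^{s})=(y,x)$, which shows $(y,x)\in\Gamma$ and hence $\Gamma\tau=\Gamma$; moreover $x\pi^{s/2}\neq x$ (as $s/2$ is less than the cycle length $s$) ensures $\Gamma$ is non-diagonal, and Lemma \ref{key}(ii) again gives its length as $s$.

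I expect the only slightly delicate point to be the number-theoretic pinning in the forward direction: one must use the non-diagonal hypothesis both to exclude $i=0$ and to guarantee $s\ge 2$, and then invoke the length $s$ of the c-cycle (rather than merely the order of $\pi$) to squeeze $2i$ into the single admissible multiple of $s$. Identifying that $x$'s $\pi$-cycle has length exactly $s$---not just a divisor of $s$---likewise relies on $y$ sitting in that same cycle together with Lemma \ref{key}(ii); everything else is routine coordinate bookkeeping.
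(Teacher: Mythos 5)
Your proof is correct and follows essentially the same route as the paper's: identify the power $\pi^{i}$ carrying $(x,y)$ to $(y,x)$, square it to force $2i=s$, and verify the converse by the direct computation $(x,y)\pi^{s/2}=(y,x)$. You are somewhat more thorough than the paper—explicitly ruling out $i=0$ via non-diagonality, invoking Lemma~\ref{key}(ii) to pin the length of $x$'s cycle at exactly $s$, and using the commutation of $\tau$ with the c-action to pass from one symmetric pair to symmetry of the whole cycle—but these are elaborations of the same argument, not a different one.
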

\begin{proof}
Suppose $\Gamma$ symmetric, and $(x,y)\in \Gamma$. Then $(y,x) = (x\pi^{i},y\pi^{i}) $ for some $i$, whence $x=x\pi^{2i}$ for some $i$ assumed minimal, and so $s = 2i$ and $(x,y) = (x,x\pi^{i})$. Conversely, if $s$ is even, consider $(x, y) \in \Gamma $ with $x\in C_{j}$ as described, and $y = x\pi^{\frac{s}{2}}$; then $(y,x) = (x\pi^{\frac{s}{2}}, x\pi^{s}) = (x, y)\pi^{\frac{s}{2}} \in \Gamma $ and $\Gamma$ is symmetric.
\end{proof}

This establishes the following set-up.
A c-cycle $\Gamma$ contained in $C_{i}\times C_{j}$ where $i\neq j$ is \emph{ipso facto} skew and matched with $\Gamma \tau \subseteq C_{j}\times C_{i}$.  If $\lambda_{i} = \vert C_{i}\vert $ is odd, all non-diagonal c-cycles in $\Gamma\subseteq C_{i}\times C_{i}$ (there are $\lambda_{i}-1$ of them) are skew and occur in pairs.
 If $\lambda_{i} = \vert C_{i}\vert $ is even, the diagonal and one other c-cycle are symmetric and the rest skew and occur in pairs.
 
Thus in the cycle type $\lambda= (\lambda_1^{\mu_1},\dots,\lambda_t^{\mu_t})$ there are $\sum_{i} \mu_{i}$ diagonal cycles, while the number of symmetric cycles as described in Lemma \ref{sym} is the number of even-length cycles in $\pi$.  So if we define $e(n)$ to be 1 if $n$ is even and $0$ otherwise, there are $\sum_{i} \mu_{i}e(\lambda_{i})$ even-length cycles, and $\delta(\lambda) = \sum_{i} \mu_{i}(1 + e(\lambda_{i}))$ symmetric cycles altogether. The remaining cycles number $\beta(\lambda) - \delta(\lambda)$ and occur in matched pairs.   
(Note these counts are dependent not on $\pi$ but only on its cycle type.)

It follows that the set of building blocks for orbits for symmetric partial partitions semirigidly fixed by $\pi$ is of size $$\frac{1}{2}(\beta(\lambda) - \delta(\lambda)) + \delta(\lambda) = \frac{1}{2}(\beta(\lambda) + \delta(\lambda)).$$
For brevity, we denote this value by $\gamma(\lambda)$.

Arguing as in Theorem \ref{count}, this means that for commutative isomorphism classes, $\beta(\pi)$ should be replaced
by $\gamma(\pi)$, when we have 

\begin{thm}\label{comm}
The number of non-isomorphic semirigid commutative 3-nilpotent semigroups of cardinality $n$ is bounded above by
$$
\sum_{r=1}^{n-2} \sum_{\lambda\vdash r} \frac{\stirling{\gamma(\lambda)+1}{n-r}}{\lambda_1^{\mu_1}\dots\lambda_t^{\mu_t}\mu_1!\dots \mu_t!} .
$$
\end{thm}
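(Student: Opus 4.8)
The plan is to follow the proof of Theorem \ref{count} essentially verbatim, replacing at the decisive step the count $\beta(\pi)$ of all c-cycles by the count $\gamma(\pi)$ of \emph{symmetric} building blocks. First I would check that restricting the $\bS_{r}$-action on $\PP_k(X\times X)$ to the \emph{symmetric} partial partitions is legitimate: the twist $\tau$ commutes with the c-action, since one verifies on pairs that $(\mathbf{P}\pi)\tau = (\mathbf{P}\tau)\pi$, so symmetry is preserved by every $\pi\in\bS_{r}$. By Lemma \ref{orb} and Corollary \ref{auts} the isomorphism classes of commutative rank-$r$ semigroups are then exactly the orbits of $\bS_{r}$ on the symmetric members of $\PP_k(X\times X)$ with $r+k=n-1$; intersecting with the semirigid subact (Lemma \ref{subacts}) reduces the task to counting orbits on the semirigid symmetric partitions.

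The central step is the commutative analogue of Lemma \ref{tech}. Exactly as in Proposition \ref{the list}(iii), I would bound the number of semirigid symmetric partitions of rank $k$ fixed by $\pi$ above by the number that are symmetric and \emph{semirigidly} fixed. A partition semirigidly fixed by $\pi$ has each block a union of c-cycles (proof of Proposition \ref{the list}(ii)); such a block is symmetric precisely when $\tau$ permutes the c-cycles it contains, and since $\tau$ is an involution this forces each block to be a union of \emph{symmetric} c-cycles together with \emph{matched pairs} of skew c-cycles, with no coupling between these choices. By Lemma \ref{sym} and the tally preceding this theorem, the admissible building blocks number $\delta(\lambda)$ symmetric cycles plus $\tfrac12(\beta(\lambda)-\delta(\lambda))$ matched skew pairs, i.e.\ $\gamma(\lambda)$ in all. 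Hence a symmetric semirigidly-fixed partition of rank $k$ is exactly a partial partition of this $\gamma(\lambda)$-element set of building blocks into $k$ parts, and Lemma \ref{nppart} counts these as $\stirling{\gamma(\lambda)+1}{k+1}$.

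With this bound the remainder is mechanical. Applying the orbit-counting formula as in Proposition \ref{upbd} yields, for fixed $r$, the upper bound $\frac{1}{r!}\sum_{\pi\in\bS_{r}}\stirling{\gamma(\pi)+1}{n-r}$ on the number of semirigid commutative isomorphism classes of rank $r$, using $k+1=n-r$. Since $\gamma(\pi)$ depends only on the cycle type $\lambda$ of $\pi$, I would regroup over conjugacy classes, each class of type $\lambda$ having $r!/w(\lambda)$ elements; this turns $\frac{1}{r!}\sum_{\pi}$ into $\sum_{\lambda\vdash r}w(\lambda)^{-1}$, and summation over $1\leq r\leq n-2$ gives the stated double sum.

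The main obstacle is the combinatorial identification in the middle paragraph: confirming that the two constraints ``semirigidly fixed'' and ``symmetric'' interact cleanly, so that the admissible blocks are \emph{freely combinable} unions of symmetric c-cycles and matched skew pairs, with $\gamma(\lambda)$ building blocks and nothing more. This is where Lemma \ref{sym} does the real work by pinning down exactly which c-cycles are symmetric; once the building blocks are seen to combine into a partition without further restriction, the substitution $\beta\mapsto\gamma$ and the passage through Lemma \ref{nppart}, orbit-counting, and the conjugacy-class regrouping are identical to the non-commutative case.
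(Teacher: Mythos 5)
Your proposal is correct and takes essentially the same approach as the paper: bound the semirigid symmetric partitions fixed by $\pi$ by the symmetric semirigidly-fixed ones, identify their building blocks as the $\delta(\lambda)$ symmetric c-cycles together with the $\tfrac{1}{2}(\beta(\lambda)-\delta(\lambda))$ matched pairs of skew c-cycles (via Lemma \ref{sym}), count rank-$k$ partial partitions of these $\gamma(\lambda)$ blocks by Lemma \ref{nppart}, and then rerun the orbit-counting and conjugacy-class regrouping of Theorem \ref{count} with $\beta$ replaced by $\gamma$. Indeed you make explicit two points the paper compresses into ``arguing as in Theorem \ref{count}'', namely that the twist commutes with the c-action (so symmetry is orbit-invariant) and that the building blocks combine freely into partitions.
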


\begin{exa}{1 and 3 (even more)}  \label{ex13-more}
First with $n=4$: for $\lambda = 1^{2}, \lambda_{1}=1, \mu_{1}=2$, and $\delta(1^{2})=2(1+0)=2$;  then $\gamma(1^{2})=\frac{1}{2} (\beta(1^{2}) + 2) = 3.$
For $\lambda = 2^{1},  \lambda_{1}=2, \mu_{1}=1$, and $\delta(2^{1})=1(1+1)=2$, whence $\gamma(2^{1})=\frac{1}{2}(2+2)=2$.  The formula from Theorem \ref{comm} evaluates to $\frac{1}{2}\stirling{4}{2} + \frac{1}{2}\stirling{3}{2} = 5$, as reported in  \cite{DM}, Table 6.   With $n=5$, the relevant $\lambda$ are $1^{2}, 2^{1}, 1^{3}, 1^{1}2^{1}$ and $3^{1}$.  The $\beta$ values as calculated in Example \ref{ex3} are $4,2,9,5,3$ respectively, with corresponding $\delta$ values $3,2,6,4,2$; so $\gamma = 3,2,3,3,1$.  The weights are $2,2,6,2,3$ so the estimate from Theorem \ref{comm} is $\stirling{4}{3}/2 + \stirling{3}{3}/2 + \stirling{7}{2}/6 + \stirling{5}{2}/2 + \stirling{3}{2}/3 = 22.5$, cf. the exact $23$ for all commutative 3-nilpotents from \cite{DM}. 
\end{exa}


\section{Counts up to equivalence and self-dual classes}

We may also consider two semigroups to be essentially the same if they are anti-isomor\-phic, that is, one is isomorphic to the dual of the other.  The transitive closure of this relation is the \emph{equivalence} relation, that is, being either isomorphic or anti-isomorphic, which has also been called `up to anti-isomorphism'.
The pairing implicit in the above discussion is induced by the twist 
$\tau\colon X\times X \to X\times X$ defined by $(x,y)\tau =(y,x)$, which extends to an action on the semirigid
partial partitions on $X\times X$: for (as is easily seen) $\mathbf{P}\tau$ is semirigid if and only if $\mathbf{P}$ is.  
It further extends to an action on the \emph{orbits} of those partitions under $\mathbb{S}_{r}$, equivalently, to isomorphism classes of the relevant semigroups.  To be specific, the isomorphism class of $S$ maps to the isomorphism class of $S^{*} = S\tau$.  
  
We shall use the following well-known fact, for which we present a proof (just because it is thematic).  
 \begin{lem}\label{folk}
 Let $\mathscr{X}$ be a set of isomorphism classes of semigroups, and $\mathscr{Y}$ the set of self-dual classes in $\mathscr{X}$.  Then the number of equivalence classes of these semigroups (i.e. up to anti-isomorphism) is $\frac{1}{2}(|\mathscr{X}|+|\mathscr{Y}|)$.
\end{lem}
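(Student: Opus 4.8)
The plan is to realise anti-isomorphism as the action of a group of order two on $\mathscr{X}$ and then to count orbits. First I would check that sending a semigroup $S$ to its dual $S^{\ast}$ induces a well-defined map $\ast$ on the set $\mathscr{X}$ of isomorphism classes. If $\phi\colon S_{1}\to S_{2}$ is an isomorphism, then the same underlying bijection satisfies $\phi(x\ast_{1}y)=\phi(y\cdot_{1}x)=\phi(y)\cdot_{2}\phi(x)=\phi(x)\ast_{2}\phi(y)$, so $S_{1}^{\ast}\cong S_{2}^{\ast}$, and hence the class of $S^{\ast}$ depends only on the class of $S$. Since $(m^{\ast})^{\ast}=m$ we have $S^{\ast\ast}=S$, so $\ast$ is an involution on $\mathscr{X}$; at the level of partial partitions this is exactly the twist $\tau$ followed by passage to $\bS_{r}$-orbits, as recorded just before the statement. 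Thus $\{\mathrm{id},\ast\}$ is a group of order two acting on $\mathscr{X}$.

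Next I would match the combinatorial objects with the group-theoretic ones. By definition the equivalence classes (up to anti-isomorphism) are precisely the orbits of this action: two isomorphism classes lie in one orbit exactly when one semigroup is isomorphic to the dual of the other. The fixed points of $\ast$ are precisely the self-dual classes, namely the members of $\mathscr{Y}$, whereas $\mathrm{id}$ fixes all of $\mathscr{X}$.

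To conclude, two routes are available. The clean route uses the orbit-counting (Burnside--Frobenius) formula already invoked in Section 5: the number of orbits equals the average number of fixed points over the two group elements, that is $\tfrac{1}{2}\bigl(|\mathrm{Fix}(\mathrm{id})|+|\mathrm{Fix}(\ast)|\bigr)=\tfrac{1}{2}\bigl(|\mathscr{X}|+|\mathscr{Y}|\bigr)$, as required. Alternatively one may count directly: the $|\mathscr{Y}|$ self-dual classes each form a singleton orbit, while the remaining $|\mathscr{X}|-|\mathscr{Y}|$ classes split into disjoint dual pairs $\{[S],[S^{\ast}]\}$, contributing $(|\mathscr{X}|-|\mathscr{Y}|)/2$ orbits, so the total is $|\mathscr{Y}|+\tfrac{1}{2}\bigl(|\mathscr{X}|-|\mathscr{Y}|\bigr)=\tfrac{1}{2}\bigl(|\mathscr{X}|+|\mathscr{Y}|\bigr)$. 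There is no serious obstacle here; the only points needing a moment's care are the well-definedness of $\ast$ on isomorphism classes (the first paragraph) and the fact that the non-self-dual classes genuinely pair off, which is immediate from $\ast$ being a fixed-point-free involution on $\mathscr{X}\setminus\mathscr{Y}$.
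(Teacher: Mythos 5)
Your proof is correct and its main route — realising duality as an involution on $\mathscr{X}$, identifying equivalence classes with orbits of the two-element group $\{\mathrm{id},\ast\}$, and applying the orbit-counting (Burnside--Frobenius) formula — is exactly the argument the paper gives. The extra care you take over well-definedness of $\ast$ on isomorphism classes, and the alternative direct pairing count, are fine but not needed beyond what the paper's one-line proof already assumes.
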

  
\begin{proof}
The equivalence classes are precisely the orbits of the action of $\langle\tau\rangle =\{\text{id},\tau\}$.  Now $\text{id}$ fixes each element of $\mathscr{X}$ and $\tau$ fixes each element of $\mathscr{Y}$, so the claim follows from the orbit-counting lemma.
\end{proof}

Our strategy is to take $\mathscr{X}$ to consist of the orbits of semirigid 3-nilpotent semigroups $S=N(\mathbf{P})$, identified in Lemma \ref{tech} and counted (in the sense of finding an upper bound) in Proposition \ref{upbd}.  
Then the elements of $\mathscr{Y}$ are the orbits of semirigid partial partitions $\mathbf{P}$ satisfying 
 $\mathbf{P}\pi\tau = \mathbf{P}$ for some $\pi$, a sub-act since $\mathbf{P}\sigma\pi^{\sigma}\tau = \mathbf{P}\sigma$.  Our next task is to identify and count those partitions and their orbits.  

So let $\mathbf{P}$ be a partial partition on $X\times X$ such that $\mathbf{P}=\mathbf{P}\pi\tau$.  
Then $\mathbf{P}\pi^{2} = \mathbf{P}$, and since $\mathbf{P}$ is semirigid, we have $P_{i}\pi^{2} = P_{i}$ for each block $P_{i}$.  
Thus $\mathbf{P}$ is formed by partitioning the c-cycles of $\pi^{2}$, so that each block $P_{i}$ is a union of c-cycles of $\pi^{2}$.  We shall impose conditions on $\mathbf{P}$ so that $\mathbf{P} = \mathbf{P}\pi\tau$ (without necessarily forcing $P_{i} = P_{i}\pi\tau$).

It is convenient to denote the set of c-cycles of $\pi^{2}$ by $\mathscr{C}$.   
A previous remark shows that $\mathbf{P}$ induces a partial partition $\mathbf{P}^{\dag}$ on $\mathscr{C}$ such that if $(x,y)$ and $(x_{1},y_{1})$ belong to the same block of $\mathbf{P}$, then $(x,y)\langle\pi^{2}\rangle$ and $(x_{1},y_{1})\langle\pi^{2}\rangle$ belong to the same block of $\mathbf{P}^{\dag}$. Conversely, each block of $\mathbf{P}$ is the union of cycles $\Gamma$ in the same block of $\mathbf{P}^{\dag}$.  It is worth noting at the outset  that if $\Gamma\in\mathscr{C}$ then $\Gamma\pi ,\Gamma\tau\in\mathscr{C}$.

There are three kinds of cycles $\Gamma \in \mathscr{C}$: diagonal cycles, which need separate attention, those for which $\Gamma = \Gamma\pi\tau$, which we call \emph{singular}, and the rest which we call \emph{regular}.  The regular ones occur in distinct pairs $\Gamma, \Gamma\pi\tau$ which we call \emph{associates}.   
Singular cycles are rare: 
\begin{pro}\label{sing}
There is one singular cycle for each x-cycle of odd length, and it is diagonal; and there are two for each x-cycle of twice-odd length, and no others.
\end{pro}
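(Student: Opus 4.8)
The plan is to recognise the singular cycles as the fixed points of a natural involution. First I would record the identity $(\pi\tau)^2=\pi^2$: for any $(x,y)$ one has $(x,y)\pi\tau=(y\pi,x\pi)$, and applying $\pi\tau$ again returns $(x\pi^2,y\pi^2)=(x,y)\pi^2$. Since both $\pi$ and $\tau$ preserve $\mathscr{C}$ (the remark made just before the Proposition), the map $\pi\tau$ permutes $\mathscr{C}$; and because its square $\pi^2$ fixes every member of $\mathscr{C}$ setwise, $\pi\tau$ acts on $\mathscr{C}$ as an \emph{involution}. The singular cycles are by definition exactly its fixed points, so the whole task reduces to counting these.

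Next I would localise the count to a single x-cycle. Each c-cycle $\Gamma\in\mathscr{C}$ lies in some $C_i\times C_j$, where $C_i,C_j$ are x-cycles of $\pi$ (its first and second coordinates each form a $\pi^2$-orbit, hence sit inside one $\pi$-cycle; cf.\ Lemma~\ref{key}). Then $\Gamma\pi\tau\subseteq C_j\times C_i$, so $\Gamma=\Gamma\pi\tau$ forces $i=j$: every singular cycle lives inside one block $C\times C$. Fixing such an x-cycle $C$ of length $\lambda$, I would identify $C$ with $\mathbb{Z}_\lambda$ so that $\pi$ acts as $t\mapsto t+1$, $\pi^2$ as $t\mapsto t+2$, and $\tau$ as the coordinate swap. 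The c-cycles of $\pi^2$ inside $C\times C$ are then easy to list: the difference $c=b-a\in\mathbb{Z}_\lambda$ is a $\pi^2$-invariant, and when $\lambda$ is even the parity $\epsilon$ of the first coordinate is a second invariant (adding $2$ preserves it). A short check shows these invariants separate the cycles, so for $\lambda$ odd the cycles are indexed by $c$ alone, while for $\lambda=2\mu$ they are indexed by pairs $(\epsilon,c)$.

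Finally I would compute the involution in these coordinates and solve for its fixed points. From $(a,b)\pi\tau=(b+1,a+1)$ one reads off that $\pi\tau$ sends the cycle of difference $c$ to the cycle of difference $-c$, and in the even case shifts the parity index by $c+1$, i.e.\ $(\epsilon,c)\mapsto(\epsilon+c+1,\,-c)$. For $\lambda$ odd, a cycle is fixed iff $2c\equiv 0\pmod\lambda$, forcing $c=0$: the unique, diagonal, singular cycle. For $\lambda=2\mu$, a cycle $(\epsilon,c)$ is fixed iff $2c\equiv 0\pmod{2\mu}$ (giving $c\in\{0,\mu\}$) and $c$ is odd; this is solvable precisely when $\mu$ is odd, i.e.\ when $\lambda$ is twice-odd, in which case $c=\mu$ and both parities $\epsilon$ qualify, yielding exactly two singular cycles, necessarily non-diagonal since $c=\mu\neq 0$. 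When $4\mid\lambda$ neither condition holds, so there are none; together with the localisation step this exhausts all cases and gives the stated counts. I expect the only delicate point to be the bookkeeping of the parity invariant $\epsilon$ in the even-length case---both verifying that $(\epsilon,c)$ genuinely parametrises the c-cycles of $\pi^2$ and tracking how $\pi\tau$ acts on $\epsilon$; the odd case and the localisation are routine.
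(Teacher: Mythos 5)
Your proposal is correct, and it reaches the count by a genuinely different route than the paper. The paper argues pointwise: if $\Gamma=\Gamma\pi\tau$ and $(x,y)\in\Gamma$, then $(x,y)=(y\pi,x\pi)\pi^{2k}$ for some $k$, which forces $x=x\pi^{4k+2}$, i.e.\ $4k+2\equiv 0\pmod{\lambda_i}$; the number of solutions for $k$ is $0$, $1$ or $2$ according as $(\lambda_{i},4)=4,1$ or $2$, and a short verification then confirms that the resulting sets are genuinely fixed cycles (diagonal in the odd case, the two off-diagonal ones in the twice-odd case). You instead exploit structure: from $(\pi\tau)^2=\pi^2$ you get that $\pi\tau$ is an involution on $\mathscr{C}$ whose fixed points are exactly the singular cycles, you localise to square panels $C\times C$ (the paper does this only implicitly here, and explicitly later in Theorem~\ref{thm:sap}), and you parametrise the c-cycles of $\pi^2$ inside $C\times C$ by complete invariants---the difference $c=b-a\in\mathbb{Z}_{\lambda}$, together with the parity $\epsilon$ of the first coordinate when $\lambda$ is even---so that the involution becomes $(\epsilon,c)\mapsto(\epsilon+c+1,-c)$ and the singular cycles are read off from $2c\equiv 0$ and $c$ odd. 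What the paper's route buys is brevity: no parametrisation has to be set up or justified. What your route buys is transparency and reuse: the trichotomy by $\lambda\bmod 4$ and the diagonal/non-diagonal distinction ($c=0$ versus $c=\mu$) fall out of the fixed-point equations rather than needing separate verification, and the same coordinates would also deliver the associate-pair counts $\zeta(\lambda)$ needed in Theorem~\ref{thm:sap}. The two points you flag as delicate---that $(\epsilon,c)$ separates the $\pi^2$-cycles (class size equals orbit size, namely $\mu$ when $\lambda=2\mu$ and $\lambda$ when $\lambda$ is odd) and that parity is well defined on $\mathbb{Z}_{\lambda}$ precisely because $\lambda$ is even---both check out, so there is no gap.
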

\begin{proof}
 If $\Gamma = \Gamma\pi\tau$ and $(x,y)\in \Gamma$, then $(x,y)=(y\pi,x\pi)\pi^{2k}$ for some $k$, whence $x=y\pi^{2k+1}$ and $y=x\pi^{2k+1}$, so that $x=x\pi^{4k+2}$ and $4k
+2\equiv 0\pmod{\lambda_{i}}$.  This has 0,1, or 2 solutions for $k\bmod{\lambda_{i}}$ according as $(\lambda_{i},4)= 4,1\text{ or }2$.  Hence if $\lambda_{i}\equiv 0\pmod{4}$ there is no solution, if $\lambda_{i}\equiv 1,3\pmod{4}$ the unique solution is $k=(\lambda_{i}-1)/2$, and if $\lambda_{i}\equiv 2\pmod{4}$ the solutions are $k\in\{(\lambda_{i}-2)/4,(\lambda_{i}-2)/4+\lambda_{i}/2\}$. 

These solutions lead respectively to $2k+1=\lambda_{i}, \lambda_{i} /2$, and $3\lambda_{i} /2$. Thus either $\lambda_{i}\equiv 1\pmod{2}$, $x=y\pi^{\lambda_{i}}= y$, and $\Gamma$ is diagonal; or $\lambda_{i}\equiv 2\pmod{4}$ and $y=x\pi^{\lambda_{i}/2}=x\pi^{3\lambda_{i}/2}$. In this latter case, it is readily verified that $\Gamma=\{(x,x\pi^{\lambda_{i}/2})\}\langle\pi^{2}\rangle\in \mathscr{C}$ satisfies $\Gamma=\Gamma\pi\tau$, as does $\Gamma\pi$; thus $\Gamma$ and $\Gamma\pi$ are singular and distinct (and non-diagonal). 
\end{proof}

We can now characterise partitions $\mathbf{P}^{\dag}$ of $\mathscr{C}$ such that $\mathbf{P}=\mathbf{P}\pi\tau$ for a given $\pi$.  One should note that a $\mathbf{P}$ constructed as described below need not be semirigid, so the set of all such may properly contain $\mathscr{Y}$, and its cardinality is an upper bound for $|\mathscr{Y}|$ . 

\begin{thm}\label{thm:C}
Given $\pi\in\mathbb{S}_{r}$ and the collection $\mathscr{C}$ of all c-cycles of $\pi^{2}$, choose a subset (allowed to be empty) of associated pairs of regular c-cycles, and partition it completely so that no block contains both members of an associated pair.  Let this partition be denoted $\mathbf{Q}$ (empty if the chosen subset is empty).  Then choose a partial partition $\mathbf{R}$ of the remaining c-cycles, chosen so each block contains, along with a c-cycle $\Gamma$, its associated c-cycle $\Gamma\pi\tau$.  That is, each such block consists of singular cycles and \emph{unions of} associated pairs.

Then the union $\mathbf{P}^{\dag} = \mathbf{R}\cup\mathbf{Q}$ corresponds to a partial partition $\mathbf{P}$ of $X\times X$ which satisfies $\mathbf{P}\pi\tau = \mathbf{P}$.
Moreover, every semirigid partial partition $\mathbf{P}$ fixed by $\pi\tau$ is uniquely of this form. 

 \end{thm}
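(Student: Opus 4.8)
The plan is to reduce the whole statement to a routine fact about an \emph{involution acting on a partition}. The starting observation is that $\pi$ and $\tau$ commute, since $(x,y)\pi\tau = (y\pi,x\pi) = (x,y)\tau\pi$, and $\tau^{2}=\mathrm{id}$, so that $(\pi\tau)^{2}=\pi^{2}$. Hence $\mathbf{P}\pi\tau=\mathbf{P}$ forces $\mathbf{P}\pi^{2}=\mathbf{P}$, and when $\mathbf{P}$ is semirigid this in turn forces $P_{i}\pi^{2}=P_{i}$ for every block, i.e.\ each block is a union of c-cycles of $\pi^{2}$. Thus a semirigid $\mathbf{P}$ fixed by $\pi\tau$ is faithfully recorded by the partition $\mathbf{P}^{\dag}$ it induces on the set $\mathscr{C}$ of c-cycles of $\pi^{2}$. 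Since a block $B=\bigcup_{j}\Gamma_{j}$ of $\mathbf{P}$ satisfies $B\pi\tau=\bigcup_{j}(\Gamma_{j}\pi\tau)$, the correspondence $\mathbf{P}\leftrightarrow\mathbf{P}^{\dag}$ is equivariant, and $\mathbf{P}\pi\tau=\mathbf{P}$ is equivalent to $\mathbf{P}^{\dag}$ being invariant under the map $\iota\colon\Gamma\mapsto\Gamma\pi\tau$ on $\mathscr{C}$.

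Next I would record that $\iota$ is an involution of $\mathscr{C}$: indeed $\Gamma(\pi\tau)^{2}=\Gamma\pi^{2}=\Gamma$ because $\Gamma$ is a single c-cycle of $\pi^{2}$, and $\Gamma\pi\tau\in\mathscr{C}$ by the remark preceding the theorem. Its fixed points are exactly the singular cycles, whose nature and number are supplied by Proposition \ref{sing} (the odd-length diagonals and the twice-odd pairs), while every other cycle lies in a two-element orbit: for a regular cycle this orbit is the associated pair $\{\Gamma,\Gamma\pi\tau\}$, and the diagonal cycles arising from even-length x-cycles are likewise swapped in pairs. This presents $\mathscr{C}$ as a set carrying an involution whose fixed set is the singular cycles and whose $2$-orbits are the associated pairs (together with the even-diagonal pairs).

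The structural dichotomy then does the real work. For any $\iota$-invariant partition $\mathbf{P}^{\dag}$ of $\mathscr{C}$ the map $\iota$ permutes the blocks, so each block $B$ is either $\iota$-fixed or lies in a swapped pair $\{B,B\iota\}$ with $B\cap B\iota=\varnothing$. An $\iota$-fixed block is a union of $\iota$-orbits, hence of singular cycles and complete associated pairs; these blocks constitute $\mathbf{R}$. A block in a swapped pair contains no $\iota$-fixed cycle (a singular $\Gamma\in B$ would give $\Gamma=\Gamma\iota\in B\iota$, against disjointness) and at most one member of each associated pair (the same disjointness argument applied to $\Gamma,\Gamma\iota\in B$); the swapped blocks thus constitute an $\iota$-invariant partition $\mathbf{Q}$ of a union of regular pairs in which no block contains a full pair. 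This shows every semirigid $\mathbf{P}$ fixed by $\pi\tau$ decomposes as $\mathbf{R}\cup\mathbf{Q}$, and since the $\iota$-fixed blocks are intrinsic the decomposition is forced, giving uniqueness. The converse is a verification: in the displayed construction the blocks of $\mathbf{R}$ are $\iota$-fixed by design, and the ``no full pair'' condition makes $\iota$ permute the blocks of $\mathbf{Q}$ without fixed points, so $\mathbf{P}^{\dag}\iota=\mathbf{P}^{\dag}$ and hence $\mathbf{P}\pi\tau=\mathbf{P}$.

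I expect the main obstacle to be bookkeeping rather than idea: one must make the paper's trichotomy diagonal/singular/regular on $\mathscr{C}$ dovetail exactly with the fixed-point-versus-$2$-orbit dichotomy of $\iota$, so that ``no block contains both members of an associated pair'' is genuinely equivalent, for an $\iota$-invariant $\mathbf{Q}$, to ``no block of $\mathbf{Q}$ is $\iota$-fixed'' (the equivalence uses that in an $\iota$-invariant partition a block meets $B\iota$ only if $B\iota=B$). Care is also needed to slot the even-length diagonal cycles, which are swapped but diagonal, into the $\mathbf{Q}$-part alongside the genuinely regular pairs. Finally one should claim only what is true: the construction yields partitions invariant under $\pi\tau$ and semirigidly fixed by $\pi^{2}$, but not necessarily semirigid under all of $\bS_{r}$, so the ``moreover'' asserts that every \emph{semirigid} fixed $\mathbf{P}$ occurs, and does so uniquely, not that every partition the construction produces is semirigid.
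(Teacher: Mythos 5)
Your overall route is the same as the paper's: pass to the induced partition $\mathbf{P}^{\dag}$ on $\mathscr{C}$, observe that $\pi\tau$ induces an involution $\iota\colon\Gamma\mapsto\Gamma\pi\tau$ of $\mathscr{C}$ (the paper encodes this as the involutory index permutation $j(i)$ with $P_{i}\pi\tau=P_{j(i)}$), and split the blocks of an invariant partition into $\iota$-fixed blocks (your $\mathbf{R}$) and swapped pairs of blocks (your $\mathbf{Q}$). Your converse and uniqueness arguments are correct and in fact more detailed than the paper's, and your insistence that the even-length diagonal cycles be treated as ``regular'' associate pairs eligible for $\mathbf{Q}$ is not mere bookkeeping but necessary for the ``moreover'' to be true: for $\pi=(1\,2\,3\,4)$ the two-block partition with blocks $\{(1,1),(3,3),(1,2),(3,4)\}$ and $\{(2,2),(4,4),(3,2),(1,4)\}$ is semirigid and fixed by $\pi\tau$, yet it splits the diagonal associate pair; so ``regular'' must be read as ``non-singular'', exactly as you do (and as the counts in Theorem \ref{thm:sap} confirm).

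The gap is in your verification of the forward conclusion. You claim that ``the no-full-pair condition makes $\iota$ permute the blocks of $\mathbf{Q}$ without fixed points, so $\mathbf{P}^{\dag}\iota=\mathbf{P}^{\dag}$.'' Orthogonality to the matching does not imply that $\iota$ maps blocks of $\mathbf{Q}$ to blocks of $\mathbf{Q}$ at all. Concretely, take two associate pairs $\{A,A\iota\}$ and $\{B,B\iota\}$ and set $\mathbf{Q}=\{\{A,B\},\{A\iota\},\{B\iota\}\}$ with $\mathbf{R}$ empty: this satisfies every condition stated in the construction, yet $\{A,B\}\iota=\{A\iota,B\iota\}$ is not a block, so the corresponding $\mathbf{P}$ has $\mathbf{P}\pi\tau\neq\mathbf{P}$. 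What is needed is the additional hypothesis that $\mathbf{Q}$ itself be $\iota$-invariant (self-dual), i.e.\ that its blocks occur in associate pairs $\{B,B\iota\}$; under that hypothesis, and only under it, your observation that orthogonality is equivalent to $\iota$ having no fixed block is correct, and $\mathbf{P}^{\dag}\iota=\mathbf{P}^{\dag}$ follows. You state precisely this equivalence in your closing paragraph (``for an $\iota$-invariant $\mathbf{Q}$''), but the invariance is never imposed in the construction you verify, and it does not follow from the conditions you check. In fairness, the theorem as printed omits this hypothesis too, and the paper's own proof of this direction is the single sentence ``manifest from the construction''; but self-duality of $\mathbf{Q}$ is what the paper actually uses downstream---it is exactly the condition counted by $a_{j,t}$ of Definition \ref{combin} in Proposition \ref{pitau}---so a complete proof must make it explicit. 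Note that your converse direction automatically produces an $\iota$-invariant $\mathbf{Q}$ (the swapped blocks of an invariant partition), so once the hypothesis is added, both directions and the uniqueness go through exactly as you wrote them.
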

\begin{proof}
The first conclusion is manifest from the construction.  
For the converse, suppose $\mathbf{P}$ is semirigid and $\mathbf{P}\pi\tau = \mathbf{P}$.  For each block $P_{i}$ of $\mathbf{P}$, there holds $P_{i}\pi\tau = P_{j}$ for some $j$, and we can write $j = j(i)$.  This gives an involutory permutation $j$ of the blocks of $\mathbf{P}$, such that if the c-cycle $(x,y)\langle\pi^{2}\rangle$ lies in $P_{i}$, then $(y\pi,x\pi)\langle\pi^{2}\rangle$ lies in $P_{j(i)}$.  Note that this ensures that $P_{j(i)}$ is determined by $P_{i}$. 

Let $\mathscr{Q}$ be the set of $\Gamma\in\mathscr{C}$ such that $\Gamma$ and $\Gamma\pi\tau$ are in different blocks $P_{i}, P_{j}\, (i\neq j(i))$, and let $\mathscr{R} = \mathscr{C}\setminus \mathscr{Q}$.  Necessarily, a c-cycle in $\mathscr{Q}$ is regular and all singular c-cycles involved in $\mathbf{P}^{\dag}$ are in $\mathscr{R}$.
There is a natural matching on $\mathscr{Q}$ in which $\Gamma\subseteq P_{i}$ and its associate $\Gamma\pi\tau\subseteq P_{j(i)}$ are matched. Note that the restriction $\mathbf{Q}$ of $\mathbf{P}^{\dag}$ to $\mathscr{Q}$ is a total partition but that $\mathscr{Q}$ may be empty. 
On the other hand, if $\Gamma\in \mathscr{R}$ and $\Gamma \subseteq P_{i}$, then $\Gamma\pi\tau \subseteq P_{i}$ also.  Thus $\mathbf{P}$ has the form described.
\end{proof}
The description in Theorem \ref{thm:C} now allows us to enumerate the partitions fixed by $\pi\tau$.  First, we catalogue the c-cycles $\Gamma$ of $\pi^{2}$ in terms of the parameters of $\pi$, in which an x-cycle $C_{i} = x\langle \pi\rangle$ has length $\lambda_{i}$, etc.  We refer to the sets $C_{i}\times C_{j}\subseteq X\times X$ as \emph{panels}.  

We need some more notation.
Let $f_{i}=1$ if $\lambda_{i}\equiv2\pmod{4}$ and $f_{i}=0$ otherwise; let $g_{i} = 1$ if $\lambda_{i}\equiv0\pmod 4$ and $g_{i} = 0$ otherwise (so $e_{i} = f_{i}+g_{i}$); and let $e_{ij} = 1$ if one or both of $\lambda_{i},\lambda_{j}\equiv0\pmod{2}$ and $e_{ij}=0$ otherwise (so $e_{ij} = e_{i}+e_{j}-e_{i}e_{j}$). 

\begin{thm}\label{thm:sap}
The c-cycles of $X\times X$ required for Theorem \ref{thm:C} when $\pi$ has cycle type $\lambda$ consist of $\eta(\lambda)$ singular cycles and $\zeta(\lambda)$ associate pairs, where 
$$
\zeta(\lambda)  =  \frac{1}{2}\sum_{i}\{(1+e_{i})(\lambda_{i}+1) - 2(f_{i}+1)\} + \sum_{i<j}\{1+e_{ij}\}(\lambda_{i},\lambda_{j})
$$ 
and
$$
\eta(\lambda) =  \sum_{i}(1-e_{i}+2f_{i}).
$$
\end{thm}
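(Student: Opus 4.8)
The plan is to reduce both $\eta(\lambda)$ and $\zeta(\lambda)$ to counts of c-cycles of $\pi^{2}$ and then read off the answer from Proposition \ref{sing} and Lemma \ref{key}. The key structural remark is that $(\pi\tau)^{2}=\pi^{2}$ as maps on $X\times X$, so the assignment $\Gamma\mapsto\Gamma\pi\tau$ is an involution on the set $\mathscr{C}$ of all c-cycles of $\pi^{2}$: since $(\pi\tau)^{2}=\pi^{2}$ fixes each such $\Gamma$ setwise, we have $\Gamma(\pi\tau)^{2}=\Gamma$. Its fixed points are by definition the singular cycles, while the regular cycles fall into two-element orbits, namely the associate pairs $\{\Gamma,\Gamma\pi\tau\}$. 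Writing $|\mathscr{C}|$ for the total number of c-cycles of $\pi^{2}$, this gives at once $|\mathscr{C}|=\eta(\lambda)+2\zeta(\lambda)$, i.e. $\zeta(\lambda)=\tfrac12(|\mathscr{C}|-\eta(\lambda))$. So it suffices to compute $\eta(\lambda)$ and $|\mathscr{C}|$ separately.

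The count $\eta(\lambda)$ is merely a bookkeeping of Proposition \ref{sing}: an x-cycle of odd length yields one singular cycle, one of twice-odd length ($\lambda_i\equiv2\pmod4$) yields two, and one of length divisible by $4$ yields none. In the stated notation, where $e_i=1$ exactly when $\lambda_i$ is even and $f_i=1$ exactly when $\lambda_i\equiv2\pmod4$, these three cases contribute $1-e_i$, $2f_i$, and $0$ respectively. Summing over the cycles $C_i$ of $\pi$ gives $\eta(\lambda)=\sum_i(1-e_i+2f_i)$, as claimed.

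For $|\mathscr{C}|$ I would count how the c-cycles of $\pi$ break up on passing to $\pi^{2}$. Exactly as cyclic permutations behave under squaring, a single c-cycle of $\pi$ of length $\ell$ becomes, under $\pi^{2}$, one c-cycle of length $\ell$ when $\ell$ is odd and two c-cycles of length $\ell/2$ when $\ell$ is even. By Lemma \ref{key} the panel $C_i\times C_j$ carries $(\lambda_i,\lambda_j)$ c-cycles of $\pi$, each of length $[\lambda_i,\lambda_j]$; and $[\lambda_i,\lambda_j]$ is even precisely when at least one of $\lambda_i,\lambda_j$ is even, i.e. when $e_{ij}=1$. Hence each panel contributes $(1+e_{ij})(\lambda_i,\lambda_j)$ c-cycles of $\pi^{2}$, and summing over ordered pairs gives $|\mathscr{C}|=\sum_{i,j}(1+e_{ij})(\lambda_i,\lambda_j)$, the diagonal panels ($i=j$) being handled by the same dichotomy with $e_{ii}=e_i$ and $(\lambda_i,\lambda_i)=\lambda_i$.

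It then remains to assemble the formula. Splitting the ordered sum into its diagonal and off-diagonal parts yields $|\mathscr{C}|=\sum_i(1+e_i)\lambda_i+2\sum_{i<j}(1+e_{ij})(\lambda_i,\lambda_j)$. Substituting into $\zeta(\lambda)=\tfrac12(|\mathscr{C}|-\eta(\lambda))$, the off-diagonal term passes through unchanged, while the diagonal contribution becomes $\tfrac12\sum_i\{(1+e_i)\lambda_i-(1-e_i+2f_i)\}$; a one-line rearrangement shows $(1+e_i)\lambda_i-(1-e_i+2f_i)=(1+e_i)(\lambda_i+1)-2(f_i+1)$, producing the stated expression for $\zeta(\lambda)$. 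The only genuine care-point is the splitting step: one must be sure that the involution $\Gamma\mapsto\Gamma\pi\tau$ has no fixed points among the regular cycles (so that the factor $2$, and hence the subtraction of $\eta$, is exactly right) and that the odd/even dichotomy for $[\lambda_i,\lambda_j]$ is applied uniformly, including on the diagonal; both are immediate once $(\pi\tau)^{2}=\pi^{2}$ and Lemma \ref{key} are in hand.
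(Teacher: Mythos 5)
Your proof is correct, and it takes a genuinely different route from the paper's. The paper argues panel by panel: for each diagonal panel $C_i\times C_i$ it splits into the cases $\lambda_i$ odd, $\lambda_i\equiv 0\pmod 4$ and $\lambda_i\equiv 2\pmod 4$, counting singular cycles and associate pairs separately in each, and for each off-diagonal pair of panels $C_i\times C_j$, $C_j\times C_i$ it runs through the four parity cases to get $(1+e_{ij})(\lambda_i,\lambda_j)$ associate pairs; $\eta(\lambda)$ and $\zeta(\lambda)$ are then assembled by summation. You instead observe that $\tau$ commutes with the c-action of $\pi$, so $\pi\tau$ commutes with $\pi^2$ and $(\pi\tau)^2=\pi^2$, whence $\Gamma\mapsto\Gamma\pi\tau$ is a well-defined involution on $\mathscr{C}$ whose fixed points are exactly the singular cycles; this gives the bookkeeping identity $|\mathscr{C}|=\eta(\lambda)+2\zeta(\lambda)$, reducing everything to the single count $|\mathscr{C}|$, which you obtain from Lemma \ref{key} plus the standard cycle-squaring rule (a cycle of length $\ell$ yields one $\pi^2$-cycle if $\ell$ is odd, two if $\ell$ is even), so each panel carries $(1+e_{ij})(\lambda_i,\lambda_j)$ c-cycles of $\pi^2$; the elementary rearrangement $(1+e_i)\lambda_i-(1-e_i+2f_i)=(1+e_i)(\lambda_i+1)-2(f_i+1)$ then recovers the stated $\zeta(\lambda)$. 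Both proofs read $\eta(\lambda)$ off Proposition \ref{sing} in the same way. What your route buys: all mod-4 casework is confined to Proposition \ref{sing}, diagonal and off-diagonal panels are treated uniformly, and the associate pairs never need to be located, only counted via orbit sizes of the involution. What the paper's route buys: explicit structural information (which cycles pair with which, and their lengths), so the pairing is visible rather than inferred. Two points you flag but should state fully in a final write-up: well-definedness of the involution on $\mathscr{C}$ needs the one-line commutation of $\pi\tau$ with $\pi^2$, and your binary split (singular versus paired) silently absorbs the non-singular diagonal cycles into associate pairs --- which is exactly how the theorem and the paper's proof count them, so no discrepancy arises.
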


\begin{proof}
Consider the panel $C_{i}\times C_{i}$. 
\begin{itemize}
\item  If  $\lambda_{i}\equiv 1 \pmod 2$, there is one singular diagonal cycle and $(\lambda_{i}-1)/2$ pairs of associates.  \item  If $\lambda_{i}\equiv 0 \pmod 4$, the c-cycles of $\pi^{2}$ are of length $\lambda_{i}/2$ and there are $2\lambda_{i}$ of them, occurring in $\lambda_{i}$ associate pairs $\{\Gamma, \Gamma\pi\tau\}$. 
\item If $\lambda_{i} \equiv 2\pmod 4$, there are, by Proposition \ref{sing}, two (off-diagonal) singular cycles, and $\lambda_{i}-1$ associate pairs (of which one makes up the diagonal).
\end{itemize}
Thus the number of singular cycles here is $(1-e_{i}) + 2f_{i}$, and summing over $i$ gives $\eta(\lambda)$.

Moreover, the number of associate pairs from the list above  is 
\begin{equation*}
 (1-e_{i})(\lambda_{i}-1)/2 + g_{i}\lambda_{i} + f_{i}(\lambda_{i}-1) 
= \frac{1}{2}(1 + e_{i})(\lambda_{i} + 1) - 1 - f _{i}.
\end{equation*}
Now consider a panel $C_{i}\times C_{j}$, with $i < j$.  For a c-cycle $\Gamma\subseteq C_{i}\times C_{j}$, $\Gamma \tau\pi\subseteq C_{j}\times C_{i}$, and so these form an associate pair.  
\begin{itemize}
 \item If $\lambda_{i}\equiv\lambda_{j}\equiv 0 \pmod 2$, the length of $\Gamma$ is $[\lambda_{i}/2,\lambda_{j}/2]$ and the number of such $\Gamma$ in $C_{i}\times C_{j}$ is $\lambda_{i}\lambda_{j} / [\lambda_{i}/2,\lambda_{j}/2] = 4(\lambda_{i}/2, \lambda_{j}/2) = 2(\lambda_{i},\lambda_{j})$.  We have a representative of each associate pair if we select $i< j$.
 \item If $\lambda_{i}\equiv 0 \pmod 2$ and $\lambda_{j}\equiv 1\pmod 2$, the length of $\Gamma $ is $[\lambda_{i}/2, \lambda_{j}]$, and there are $2(\lambda_{i}/2,\lambda_{j}) = 2(\lambda_{i},\lambda_{j})$ such $\Gamma$, each representing an associate pair. 
 \item Similarly, if $\lambda_{i}\equiv 1\pmod 2$ and $\lambda_{j}\equiv 1\pmod 2$, there are again $2(\lambda_{i},\lambda_{j})$ associate pairs.
 \item Lastly if $\lambda_{i}\equiv \lambda_{j}\equiv 1\pmod 2$, each $\pi$-cycle is also a $\pi^{2}$-cycle, so there are $(\lambda_{i},\lambda_{j})$ associate pairs represented.
 \end{itemize}
Thus each of these four cases contributes $(1+e_{ij})(\lambda_{i},\lambda_{j})$ to a count of associate pairs,  
and summing over $j \in \{i+1, \dots, r\}$ gives $\sum_{j>i}(1+e_{ij})(\lambda_{i},\lambda_{j})$ associate pairs from the non-square panels.  
Finally, summation over $i$ gives the result. 
\end{proof}

At this point, the following excursus is convenient.  Let $Y$ and $Y'$ be disjoint sets of cardinality $p$, bijective under the map $y\mapsto y'$, and say that a partial partition $\mathbf{P}$ on $Y\cup Y'$ is \emph{self-dual} if $\mathbf{P}=\mathbf{P}'$ (where $\mathbf{P}'$ is the partition on $Y'$ induced by the map $y\mapsto y'$).  

\begin{dfn}\label{combin}
Let $a_{p,q}$ be the number of self-dual (total) partitions of $Y\cup Y'$ of rank $2q$ which are orthogonal to the matching with edges $\{y,y'\}$, in the sense that 
$y,y'$ are never in the same block.  
\end{dfn}
Note that the rank of any partition as described in Definition \ref{combin} must be even, and that 
by definition, $a_{p,q}=0$ for $q>p$, $a_{p,0} = a_{0,q} = 0$ for all $p,q > 0$, and $a_{0,0} = 1$.  
\begin{lem} \label{seq}
For $p,q\geq 0$, $a_{p,q}$ is the \emph{diagonally scaled} Stirling number  $2^{p-q}\stirling{p}{q}$. 
\end{lem}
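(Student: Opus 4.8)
The plan is to construct a direct bijection that turns each partition counted by $a_{p,q}$ into a partition of $Y$ into $q$ nonempty parts carrying an extra binary datum, the whole producing the factor $2^{p-q}$. Throughout I write $\sigma$ for the fixed-point-free involution $y\leftrightarrow y'$ of $Y\cup Y'$ underlying the duality $\mathbf{P}\mapsto\mathbf{P}'$, and $\rho\colon Y\cup Y'\to Y$ for the folding map fixing $Y$ pointwise and sending $y'\mapsto y$.

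First I would establish the key structural fact that in any $\mathbf{P}$ counted by $a_{p,q}$ \emph{no block is fixed by $\sigma$}. Indeed, if $B\sigma=B$ and $y\in B$, then $y'=y\sigma\in B\sigma=B$, so $B$ contains the matched pair $\{y,y'\}$, contradicting orthogonality. Since $\mathbf{P}$ is self-dual, $\sigma$ therefore permutes its $2q$ blocks with no fixed block, partitioning them into exactly $q$ two-element orbits $\{B,B\sigma\}$.

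Next comes the folding map. I claim $\overline{\mathbf{P}}=\{\rho(B):B\in\mathbf{P}\}$ is a partition of $Y$ into $q$ nonempty parts, with $\rho(B)=\rho(B\sigma)$ on each orbit. The parts are nonempty because each $B$ is; they cover $Y$ and are disjoint because every $y\in Y$ lies in a unique block, hence in a unique $\sigma$-orbit, while $\rho(B_i)\cap\rho(B_j)=\varnothing$ for blocks in distinct orbits (an element of the intersection would lie in two distinct blocks). Thus $\mathbf{P}\mapsto\overline{\mathbf{P}}$ sends the objects counted by $a_{p,q}$ to partitions of $Y$ into $q$ parts, of which there are $\stirling{p}{q}$.

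Finally I would count the fibre. Fix $\overline{\mathbf{P}}=\{P_1,\dots,P_q\}$ with $|P_i|=p_i$. Reconstructing $\mathbf{P}$ amounts to choosing, independently for each $i$, the unordered pair of blocks $\{B_i,B_i\sigma\}$ with $\rho(B_i)=P_i$; such a pair is fixed by specifying for each $y\in P_i$ which of the two blocks contains $y$ (its partner $y'$ then lies in the other by $\sigma$-invariance, and orthogonality is automatic since $B_i\neq B_i\sigma$), and both blocks are nonempty because $P_i\neq\varnothing$. There are $2^{p_i}$ such assignments, but the two related by swapping the roles of the blocks give the same unordered pair and are always distinct, so the number of pairs is $2^{p_i-1}$. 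Hence the fibre has size $\prod_{i=1}^{q}2^{p_i-1}=2^{p-q}$, and summing over the $\stirling{p}{q}$ choices of $\overline{\mathbf{P}}$ yields $a_{p,q}=2^{p-q}\stirling{p}{q}$; the stated boundary values are then immediate. I expect the only delicate step to be the fibre count, specifically the verification that no block is $\sigma$-fixed (so orbits have size exactly two) together with the factor of $2$ in passing from ordered to unordered pairs of blocks. Should a self-contained induction be preferred, an alternative is to check directly that $a_{p,q}$ satisfies the recurrence $a_{p,q}=2q\,a_{p-1,q}+a_{p-1,q-1}$ obeyed by $2^{p-q}\stirling{p}{q}$, by conditioning on whether $\{y_p\}$ and $\{y_p'\}$ form a new orbit of singleton blocks or $y_p$ is inserted into one of the $2q$ existing blocks.
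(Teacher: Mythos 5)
Your proof is correct, but it takes a genuinely different route from the paper's. The paper proves the identity by induction: it derives the recursion $a_{p,q}=a_{p-1,q-1}+2q\,a_{p-1,q}$ combinatorially (conditioning on whether the new elements $p,p'$ form fresh singleton blocks or are inserted into an existing block and its dual), and then checks that $2^{p-q}\stirling{p}{q}$ satisfies the same recursion and initial conditions --- this is exactly the ``alternative'' you mention in your last sentence. Your main argument is instead a direct bijective fibre count: orthogonality forces every $\sigma$-orbit of blocks to have size two, the folding map $\rho$ sends each counted partition to a partition of $Y$ into $q$ parts, and over a fixed part $P_i$ of size $p_i$ the admissible unordered dual pairs of blocks number $2^{p_i}/2=2^{p_i-1}$, giving the fibre size $2^{p-q}$ outright. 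What your approach buys is a transparent combinatorial meaning for the scaling factor $2^{p-q}$ (a sign choice per element, modulo a global swap per part) with no induction; what the paper's buys is brevity and the convenience of only having to verify a recurrence. One small point to tighten: in your disjointness step, an element $y\in\rho(B_i)\cap\rho(B_j)$ need not literally lie in both blocks --- the remaining case is $y\in B_i$ and $y'\in B_j$, which forces $B_j=B_i\sigma$ and so contradicts the blocks lying in distinct $\sigma$-orbits rather than their being distinct blocks; with that phrasing the argument is complete.
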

\begin{proof}
First we prove that the sequence (or array)  $a_{p,q}$ satisfies the recursion $$a_{p,q} = a_{p-1,q-1} + 2q\cdot a_{p-1,q} .$$  The equation holds for $p=1$, so let $p\ge 2$.  Given a set $Y=\{1,\dots, p-1\}$ and a partition $\mathbf{P} = \{P_{1},\dots, P_{2q}\}$ as described in Definition \ref{combin}, adjoin $p$ to $P_{j}$ and $p'$ to $P'_{j}$ to obtain a partition of $\{1,\dots, p\}$ meeting the conditions of Definition \ref{combin}.  This can be done in $2q$  distinct ways.  Alternatively, to a partition of $Y$ of rank $2q-2$ add new singleton blocks $\{p\}$ and $\{p'\}$ thus creating a self-dual partition of $\{1,\dots, p\}$ of rank $2q$ again meeting the conditions of Definition \ref{combin}.  All such self-dual partitions of $\{1,\dots, p\}$ of rank $2q$ may be constructed in one of these two ways, as may be seen by considering the effect of deleting elements $p, p'$.  Hence the recursion.  Secondly, it is simple to verify that $2^{p-q}\stirling{p}{q}$ also satisfies this recursion and  the same initial conditions as $a_{p,q}$. %
\end{proof}
This sequence is A075497 of the OEIS (\cite{OEIS}).  We use it to advance our cause:

\begin{pro} \label{pitau}
For given $\pi$ with cycle type $\lambda$, the number of semirigid fixed points of $\pi \tau$ acting on $\PP_{k}(X\times X)$ is 
 $$ F(\pi\tau) = \sum_{j=0}^{\zeta(\lambda)}\binom{\zeta(\lambda)}{j}\sum_{t=0}^{j} 
a_{j,t}\stirling{\zeta(\lambda)+\eta(\lambda) - j+1}{n-r-2t} .$$
\end{pro}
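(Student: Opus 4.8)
The plan is to count directly via the bijective description supplied by Theorem~\ref{thm:C}, stratifying the count according to two parameters: how many associate pairs are split into the partition $\mathbf{Q}$, and what the rank of $\mathbf{Q}$ is. First I would invoke Theorem~\ref{thm:sap} to fix the inventory of c-cycles of $\pi^{2}$ available to us: namely $\eta(\lambda)$ singular cycles and $\zeta(\lambda)$ associate pairs of regular cycles. By Theorem~\ref{thm:C}, every semirigid $\mathbf{P}$ with $\mathbf{P}\pi\tau=\mathbf{P}$ is obtained uniquely by choosing a subset of the associate pairs, totally partitioning it into $\mathbf{Q}$ subject to the constraint that no block contains both members of a pair, and then partially partitioning the remaining atoms into $\mathbf{R}$, each block of which collects c-cycles together with their associates. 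So the enumeration reduces to counting these two independently chosen pieces and matching ranks.

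Next I would introduce the stratifying index $j$ for the number of associate pairs selected to form $\mathbf{Q}$; this contributes a factor $\binom{\zeta(\lambda)}{j}$. The key recognition step is that $\mathbf{Q}$, being a total partition of the $2j$ c-cycles drawn from $j$ associate pairs, invariant under $\pi\tau$ (since $\mathbf{P}\pi\tau=\mathbf{P}$) and containing no associate pair in a single block, is \emph{precisely} a self-dual partition orthogonal to the matching in the sense of Definition~\ref{combin}, with $Y$ and $Y'$ being the two halves of the pairs under the involution $\Gamma\mapsto\Gamma\pi\tau$. Hence if $\mathbf{Q}$ has rank $2t$ there are exactly $a_{j,t}$ choices for it, by Lemma~\ref{seq}; in particular the rank of $\mathbf{Q}$ is forced to be even and $t$ ranges over $0\le t\le j$. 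The remaining atoms for $\mathbf{R}$ are the $\eta(\lambda)$ singular cycles together with the $\zeta(\lambda)-j$ unchosen associate pairs; because within $\mathbf{R}$ each associate pair is kept together, each such pair (and each singular cycle) behaves as a single atom, giving $\eta(\lambda)+\zeta(\lambda)-j$ atoms in all. A partial partition of these atoms into $k-2t$ blocks is counted by Lemma~\ref{nppart} as
$$
\stirling{\eta(\lambda)+\zeta(\lambda)-j+1}{(k-2t)+1}.
$$

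Finally I would assemble the pieces. Since $\mathbf{Q}$ supplies $2t$ blocks and $\mathbf{R}$ supplies $k-2t$ blocks, total rank $k$ is automatic, and using $k=n-r-1$ we rewrite $(k-2t)+1 = n-r-2t$, producing exactly the summand $a_{j,t}\stirling{\zeta(\lambda)+\eta(\lambda)-j+1}{n-r-2t}$. Summing over the independent choices — $j$ from $0$ to $\zeta(\lambda)$ and $t$ from $0$ to $j$ — yields $F(\pi\tau)$ as stated.

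The main obstacle, and the step deserving the most care, is the combinatorial identification of $\mathbf{Q}$ with the object enumerated by $a_{j,t}$: one must verify that the two defining conditions of Definition~\ref{combin} hold, i.e.\ that $\pi\tau$-invariance of $\mathbf{P}$ forces $\mathbf{Q}$ to be self-dual under the matching $\Gamma\leftrightarrow\Gamma\pi\tau$, and that the Theorem~\ref{thm:C} constraint ``no block contains both members of an associated pair'' is exactly orthogonality to that matching. One must also confirm that the choices of $\mathbf{Q}$ and $\mathbf{R}$ are genuinely independent — they partition disjoint families of c-cycles — and that the rank bookkeeping is consistent, so that every semirigid fixed point is counted once and only once, with no overcounting across different values of $j$ (distinct $j$ correspond to genuinely different choices of which pairs are split versus kept together).
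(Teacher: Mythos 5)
Your proposal is correct and follows essentially the same route as the paper's own proof: stratify by the number $j$ of associate pairs forming the domain of $\mathbf{Q}$ (the factor $\binom{\zeta(\lambda)}{j}$), count the possible $\mathbf{Q}$ of rank $2t$ by $a_{j,t}$ via Definition~\ref{combin}, and count $\mathbf{R}$ as a partial partition of rank $k-2t$ on the remaining $\eta(\lambda)+\zeta(\lambda)-j$ atoms, finishing with $k+1=n-r$. If anything, you are more explicit than the paper in verifying that $\pi\tau$-invariance of $\mathbf{P}$ forces $\mathbf{Q}$ to be self-dual under the matching $\Gamma\leftrightarrow\Gamma\pi\tau$, a point the paper's proof leaves implicit in its appeal to $a_{j,t}$.
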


\begin{proof}
We use Theorems \ref{thm:C} and \ref{thm:sap} and their notation.  Choice of the domain of the partition $\mathbf{Q}$ may be made in $\binom{\zeta(\pi)}{j}$ ways, where $j\in \{0,\dots, \zeta(\lambda)\}$ is the size of the domain $Y$.  A partition of rank $2t\leq j$, constrained to be orthogonal to the matching, is placed on $Y$ in one of $a_{j,t}$ ways. There remain $\eta(\lambda)$ singular c-cycles and $\zeta(\lambda) - j$ associated pairs of regular c-cycles, and in order that $\mathbf{P}$ has rank $k$, on this set we place a partial partition $\mathbf{R}$ of rank $k-2t$.  This can be any of $\stirling{\zeta(\lambda)+\eta(\lambda) - j+1}{k-2t+1}$ partial partitions, and to conclude, $k+1=n-r$.
 \end{proof}
 
We now have a bound for the number of self-dual isomorphism classes.
\begin{thm}\label{thm:sd}
The number of self-dual isomorphism classes of semirigid 3-nilpotent semigroups of cardinality $n$ is bounded above by    
\begin{equation*}
 \sum_{r=1}^{n-2} \sum_{\lambda \vdash r} \frac{ 
\sum_{j=0}^{\zeta(\lambda)}\binom{\zeta(\lambda)}{j}\sum_{t=0}^{\min\{j,\floor{\frac{k}{2}}\}} 
a_{j,t}\stirling{\zeta(\lambda)+\eta(\lambda) - j+1}{n-r-2t} 
}
{\lambda_1^{\mu_1}\dots\lambda_t^{\mu_t}\mu_1!\dots \mu_t!} .
\end{equation*}
This is also a lower bound for the number of self-dual isomorphism classes of all  3-nilpotent semigroups of cardinality $n$.
\end{thm}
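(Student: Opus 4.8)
The plan is to realise the self-dual semirigid isomorphism classes as the $\tau$-invariant orbits of the p-action and to count them by a twisted form of the orbit-counting formula, bounding each fixed-point set by Proposition \ref{pitau}. First I would fix the rank $r$ (so $k=n-r-1$) and set up the action: by Corollary \ref{auts}(ii) together with the discussion preceding Theorem \ref{thm:C}, the self-dual isomorphism classes of semirigid semigroups of rank $r$ are exactly the orbits of $\bS_r$ on the semirigid members of $\PP_k(X\times X)$ that are carried to themselves by the twist $\tau$. Here one uses that $\tau$ commutes with the c-action, that $\tau$ preserves semirigidity, and that an orbit $O$ is self-dual precisely when $\mathbf{P}\tau\in O$, i.e.\ $\mathbf{P}\pi\tau=\mathbf{P}$ for some $\pi$.

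Second, I would count these $\tau$-invariant orbits by the observation that, for a group $H$ acting on a set $\Omega$ and an involution $\tau$ commuting with $H$ and stabilising $\Omega$, the number of $\tau$-invariant $H$-orbits equals $\frac{1}{|H|}\sum_{h\in H}|\mathrm{Fix}_\Omega(h\tau)|$. This is the same orbit-counting argument as in Lemma \ref{folk}, now applied one level down: double-counting the pairs $(\omega,h)$ with $\omega\in\Omega$ and $\omega h\tau=\omega$, each $\tau$-invariant orbit $O$ contributes $\sum_{\omega\in O}|\mathrm{Stab}_H(\omega)|=|H|$ by orbit--stabiliser (the set $\{h:\omega h\tau=\omega\}$ being a nonempty $\mathrm{Stab}_H(\omega)$-coset precisely because $\omega\tau\in O$), while non-invariant orbits contribute $0$. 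Taking $H=\bS_r$ and $\Omega$ the semirigid rank-$k$ partitions yields that the number of self-dual semirigid classes of rank $r$ is $\frac{1}{r!}\sum_{\pi\in\bS_r}|\mathrm{Fix}(\pi\tau)|$.

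Third, I would insert Proposition \ref{pitau}. Since $F(\pi\tau)$ depends on $\pi$ only through $\zeta(\lambda)$ and $\eta(\lambda)$, hence (by Theorem \ref{thm:sap}) only through the cycle type $\lambda$, I would group the permutations by conjugacy class, replacing $\frac{1}{r!}\sum_{\pi}$ by $\sum_{\lambda\vdash r}w(\lambda)^{-1}$ via the class size $r!/w(\lambda)$. Summing over $r$ from $1$ to $n-2$ and recording $k+1=n-r$, together with the constraint $k-2t\geq 0$ coming from the rank-$(k-2t)$ choice of $\mathbf{R}$ (which caps $t$ at $\min\{j,\floor{\frac{k}{2}}\}$; larger $t$ give vanishing Stirling numbers anyway), reproduces exactly the displayed double sum.

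Finally, the two inequalities follow by sandwiching $F(\pi\tau)$. The Theorem \ref{thm:C} construction produces every semirigid $\pi\tau$-fixed partition but possibly also non-semirigid ones, so $|\mathrm{Fix}_{\text{semirigid}}(\pi\tau)|\leq F(\pi\tau)$; averaging over $\bS_r$ and over $r$ gives the claimed upper bound for the self-dual semirigid classes. Conversely, every constructed partition genuinely satisfies $\mathbf{P}\pi\tau=\mathbf{P}$, so the constructed family is a subfamily of all $\pi\tau$-fixed partitions, whence $F(\pi\tau)\leq|\mathrm{Fix}_{\text{all}}(\pi\tau)|$; applying the identical twisted orbit count to $\Omega=$ all rank-$k$ partitions shows the same sum is a lower bound for the number of self-dual classes of all $3$-nilpotent semigroups. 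I expect the main obstacle to be the bookkeeping in this last sandwich: one must be careful that the twisted orbit-counting identity is applied to the \emph{same} group action on the two different $\Omega$'s (semirigid versus all partitions), so that the single rational quantity in the statement is simultaneously an over-count of one exact orbit number and an under-count of the other.
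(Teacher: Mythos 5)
Your proposal is correct and takes essentially the same approach as the paper: the paper's proof likewise applies the (twisted) orbit-counting lemma to bound the number of $\tau$-invariant orbits of the p-action by $\frac{1}{r!}\sum_{\pi\in\bS_r}F(\pi\tau)$, substitutes the expression from Proposition \ref{pitau}, groups terms by conjugacy class with weight $w(\lambda)$, and truncates the $t$-summation at $\min\{j,\floor{\frac{k}{2}}\}$ because the Stirling numbers vanish for $k-2t<0$. Your explicit double-counting derivation of the twisted orbit-counting identity and the closing sandwich $|\mathrm{Fix}_{\mathrm{semirigid}}(\pi\tau)|\leq F(\pi\tau)\leq|\mathrm{Fix}_{\mathrm{all}}(\pi\tau)|$ (giving the upper and lower bounds respectively) merely make explicit steps that the paper leaves implicit.
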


\begin{table}[bh]
\begin{center}
 \footnotesize{
 \begin{tabular}{l|r|r|r|r|r}
  $n$        & $a_n =$ \# semirigid  & $b_n =$
  Theorem \ref{thm:sd}& $c_n = $\# 3-nilp. self-dual 
  \\ \hline
  3  & 1      & 1                                        & 1               \\
  4  & 7      & 7                                        & 7              \\
  5  & 48     & 50                         & 50             \\
  6  & 639    & 649                     & 649           \\
  7           &19 475                   & 19 603
              &  19 605 \\%
 8           & -                          & 1851 244
              & -  \\%
  9           & -                          & 606 097 404
              & -  \\%
  10          & -                          &  608 877 118 483    & -

 \end{tabular}
 }
 \end{center}
\vspace{2mm}
\caption{The number of \emph{self-dual} 3-nilpotent semigroups up to isomorphism: semirigid actual, its upper bound from Theorem \ref{thm:sd}, and the actual total from Smallsemi \cite{GAP}.} 
\end{table}

\begin{proof}
By the orbit-counting lemma, 
the relevant number of orbits is bounded above by $\frac{\sum_{\pi\in\mathbb{S}_{r}}F(\pi\tau)}{r!}$.  Using the expression from Proposition \ref{pitau}, and grouping 
 the terms of this sum by rank $r$ and conjugacy class determined by $\lambda = (\lambda_1^{\mu_1},\dots,\lambda_t^{\mu_t})$, we have 
 \begin{equation*}
  \sum_{r=1}^{n-2} \sum_{\lambda \vdash r} \frac{ 
\sum_{j=0}^{\zeta(\lambda)}\binom{\zeta(\lambda)}{j}\sum_{t=0}^{j} 
a_{j,t}\stirling{\zeta(\lambda)+\eta(\lambda) - j+1}{n-r-2t} 
}
{\lambda_1^{\mu_1}\dots\lambda_t^{\mu_t}\mu_1!\dots \mu_t!} 
\end{equation*}
for the number of orbits.  Lastly, the upper limit $j$ in the summation over $t$ may be replaced by $\min\{j,\frac{k}{2}\}$ since $k-2t<0$ implies $\stirling{\cdots}{k-2t+1} = 0$.  
\end{proof}

We can now bound the classes up to equivalence (isomorphism or anti-isomorphism).  

\begin{thm}\label{equiv}
The number of non-equivalent semirigid 3-nilpotent semigroups of cardinality $n$ is bounded above by
\[
\begin{aligned}
\sum_{r=1}^{n-2} \sum_{\lambda \vdash r}  \frac{
 \stirling{\beta(\lambda) +1}{n-r} +  
\sum_{j=0}^{\zeta(\lambda)}\binom{\zeta(\lambda)}{j}\sum_{t=0}^{\min\{j,\floor{\frac{k}{2}}\}} 
a_{j,t}\stirling{\zeta(\lambda)+\eta(\lambda) - j+1}{n-r-2t} 
 }
{2 \lambda_1^{\mu_1}\dots\lambda_t^{\mu_t}\mu_1!\dots \mu_t!} .
\end{aligned}
\]  
This expression is also a lower bound for the number of all 3-nilpotent semigroups of cardinality $n$ up to equivalence.  
\end{thm}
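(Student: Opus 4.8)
The plan is to reduce everything to Lemma~\ref{folk}, applied twice. First I would take $\mathscr{X}$ to be the set of isomorphism classes of semirigid $3$-nilpotent semigroups of order $n$ and $\mathscr{Y}$ the self-dual classes among them. For Lemma~\ref{folk} to apply I must first check that $\langle\tau\rangle$ genuinely acts on $\mathscr{X}$; this is exactly the remark recorded at the start of this section, that $\mathbf{P}\tau$ is semirigid if and only if $\mathbf{P}$ is, so duality preserves semirigidity and the set of semirigid isomorphism classes is $\tau$-invariant. Lemma~\ref{folk} then gives the number of semirigid equivalence classes as $\tfrac12(|\mathscr{X}|+|\mathscr{Y}|)$ exactly.

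Next I would insert the two bounds already in hand. Theorem~\ref{count} bounds $|\mathscr{X}|$ above by $\sum_{r}\sum_{\lambda\vdash r}\stirling{\beta(\lambda)+1}{n-r}/w(\lambda)$, and Theorem~\ref{thm:sd} bounds $|\mathscr{Y}|$ above by the corresponding self-dual double sum with numerator $\sum_{j}\binom{\zeta(\lambda)}{j}\sum_{t}a_{j,t}\stirling{\zeta(\lambda)+\eta(\lambda)-j+1}{n-r-2t}$. Because both quantities enter $\tfrac12(|\mathscr{X}|+|\mathscr{Y}|)$ with the same positive coefficient $\tfrac12$ and there is no subtraction, replacing each by its upper bound produces an upper bound for the whole. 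The two double sums share the shape $\sum_{r=1}^{n-2}\sum_{\lambda\vdash r}(\cdot)/w(\lambda)$, so I would merge them termwise, add the numerators, and absorb the factor $\tfrac12$ into the common denominator $2w(\lambda)=2\lambda_1^{\mu_1}\cdots\lambda_t^{\mu_t}\mu_1!\cdots\mu_t!$. This is precisely the displayed formula, completing the upper-bound assertion.

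For the concluding lower-bound claim I would apply Lemma~\ref{folk} a second time, now to the set of all $3$-nilpotent isomorphism classes of order $n$, so that the number of all equivalence classes equals $\tfrac12(X_{\mathrm{all}}+Y_{\mathrm{all}})$, where $X_{\mathrm{all}}$ and $Y_{\mathrm{all}}$ count all isomorphism classes and all self-dual isomorphism classes respectively. The second assertions of Theorems~\ref{count} and~\ref{thm:sd} say that their bounds are themselves $\leq X_{\mathrm{all}}$ and $\leq Y_{\mathrm{all}}$; summing with coefficient $\tfrac12$ yields that the displayed formula is $\leq\tfrac12(X_{\mathrm{all}}+Y_{\mathrm{all}})$, the number of all equivalence classes. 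I do not expect a real obstacle here: the whole argument is bookkeeping of inequalities, and the only point needing care is that the single displayed expression plays two opposite roles---an upper bound for the semirigid count and a lower bound for the full count---which is legitimate precisely because Theorems~\ref{count} and~\ref{thm:sd} already sandwich the semirigid counts between the formula and the full counts.
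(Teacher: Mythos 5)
Your proposal is correct and follows exactly the paper's route: the paper's proof is the one-line application of Lemma~\ref{folk} with $|\mathscr{X}|$ and $|\mathscr{Y}|$ bounded by Theorems~\ref{count} and~\ref{thm:sd} respectively, which is precisely your argument. You have in fact supplied details the paper leaves implicit---the check that $\tau$ preserves semirigidity so that Lemma~\ref{folk} applies to the semirigid classes, and the symmetric second application giving the lower-bound assertion---both of which are sound.
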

\begin{proof}
We use Proposition \ref{folk} with upper bounds for $\mathscr{X}$ and $\mathscr{Y}$ supplied by Theorem \ref{count} and Theorem \ref{thm:sd} respectively. 
\end{proof}

\begin{table}[hb]
\begin{center}
 \footnotesize{
 \begin{tabular}{l|r|r|r|r|r}
  $n$        & $a_n =$ \# semirigid  & $b_n =$ Theorem \ref{equiv}& $c_n = $\# 3-nilp.  \\
    \hline
  3  & 1      & 1                                        & 1               \\
  4  & 8      & 8                                        & 8               \\
  5  & 81     &   83                         & 84              \\
  6  & 2634   &  2649                      & 2660            \\
  7  & 609~117 &    609 487               & 609~797        \\ 
  8  & -		& 1831 664 272  	& -	\\
  9  &   -	&	52 966 234 599 879  	&  -	\\
  10	& -	&	12 417 282 092 156 404 233	& -	\\
 \end{tabular}
 }
 \end{center}
\vspace{2mm}
\caption{The number of 3-nilpotent semigroups up to \emph{equivalence}: semirigid actual, the upper bound from Theorem \ref{equiv}, and the actual total from Smallsemi \cite{GAP}.}
\end{table}

\begin{exa}{1 (a final return)} \label{ex1-final}
Let us use Theorem \ref{thm:sd} to estimate the number of self-dual classes in the case $n=4$, which we know from the list of Example \ref{ex1} to be $7$. Here $r=2$ and $k=1$, and $ \lambda \in \{1^{2}, 2^{1}\}$. For both $\lambda = 1^{2}$ and $\lambda = 2^{1}$, $\eta=2$ and $\zeta=1$, so $j=0$ or $1$ and each of the relevant terms is $(\binom{1}{0}a_{0,0}\stirling{4}{2} + \binom{1}{1}a_{1,1}\stirling{3}{0})/2! = \frac{7}{2}$. 
Hence the sum evaluates to $7$. 
\end{exa}

\begin{exa}{7} \label{ex7}
To apply the  formula of Theorem \ref{equiv} for $n=5$, the cases are $r=2, k=2$ and $r=3, k=1$.  
With $r=2$, $\lambda = 1^{2}$ or $2^{1}$. As above, $\eta(1^{2})=\eta(2^{1})=2, \zeta(1^{2})=\zeta(2^{1})=1$, so $j\in\{0,1\}$ and the sum over $j$ is $\binom{1}{0}a_{00}\stirling{1+2-0+1}{2-0+1} + \binom{1}{1}a_{11}\stirling{1+2-1+1}{2-2+1} =\stirling{4}{3} + \stirling{3}{1} = 7$ for each $\lambda$; the denominator is $2$ so the contribution to the sum is $7$.  
With $r=3$, we have $\lambda=1^{3}, 1^{1}2^{1}$, or $3^{1}$, for which similar calculations give respective (non-zero) contributions of $\binom{3}{0}a_{0,0}\stirling{7}{2}/3! = 10.5$, $\binom{3}{0}a_{0,0}\stirling{7}{2}/2! = 31.5$, and $\binom{1}{0}a_{0,0}\stirling{3}{2}/3 = 1$.  All up this gives $50$ equivalence classes of self-dual \emph{semirigid} 3-nilpotent semigroups, and a value of $(50 + 116)/2 = 83$ in Theorem \ref{equiv}.   In comparison, the exact count for $n=5$ as computed by GAP is $84$ equivalence classes of \emph{all} 3-nilpotent semigroups, of which $50$ are self-dual. 
\end{exa}

\begin{exa}{8}\label{ex8}  
For $n=6$, we have $r\in \{2,3,4\}$ with (to cut it short) respective contributions to $S$ of $4$, $210$ and $435$, thus a total of $649$ self-dual classes, agreeing with Smallsemi \cite{GAP}.  The bound on  equivalence classes by Theorem \ref{equiv} is $2649$.
\end{exa}


\date{today} 
\end{document}